\theoremstyle{plain}
\newtheorem{theorem}{Theorem}[section]
\newtheorem{prop}[theorem]{Proposition}
\newtheorem{lem}[theorem]{Lemma}
\newtheorem{corol}[theorem]{Corollary}
\newtheorem{conj}[theorem]{Conjecture}
\theoremstyle{definition}
\newtheorem{defi}[theorem]{Definition}
\newtheorem{rmq}[theorem]{Remark}
\newtheorem{exmp}[theorem]{Example}
\def\pdim{{\rm{pdim}\,}}
\def\irr{{\rm{Irr}\,}}
\def\c{\mathbf{c}}
\def\i{\mathbf{i}}
\def\x{\mathbf{x}}
\def\k{\mathbf{k}}
\def\kQ{\k Q}
\def\fl{{\longrightarrow}\,}
\def\lf{{\longleftarrow}\,}
\def\green{\mathrm{green}}
\def\AA{{\mathcal{A}}}
\def\CC{{\mathcal{C}}}
\def\DD{{\mathcal{D}}}
\def\DDfd{\DD_{\mathrm{fd}}}
\def\HH{{\mathcal{H}}}
\def\II{{\mathcal{I}}}
\def\JJ{{\mathcal{J}}}
\def\KK{{\mathcal{K}}}
\def\PP{{\mathcal{P}}}
\def\TT{{\mathcal{T}}}
\def\h#1{\widehat{#1}}
\def\oo#1{\overrightarrow{#1}}
\def\v#1{\widecheck{#1}}
\def\E{\mathbf {EG}}
\def\Z{{\mathbb{Z}}}
\def\add{{\rm{add}}\,}
\def\per{{\mathrm{per}}\,}
\def\ens#1{\left\{ #1 \right\}}
\def\Ext{{\rm{Ext}}}
\def\End{{\rm{End}}}
\def\Hom{{\rm{Hom}}}
\def\cone{{\rm{Cone}}\,}
\def\Mut{{\mathrm{Mut}}}
\def\modd{\mathrm{mod}\,}
\def\op{{\rm{op}\,}}
\def\min{{\rm{min}\,}}
\def\thick{{\rm{thick}\,}}
\title[On Maximal Green Sequences]{On Maximal Green Sequences} 
\author{T.~Br\"ustle, G.~Dupont and M.~P\'erotin}
\address{Université de Sherbrooke, D\'epartement de Math\'ematiques, 2500 Boul. de l'Universit\'e, Sherbrooke QC J1K 2R1, Canada.}
\email{tbruestl@ubishops.ca}
\address{Bishop's University, Department of Mathematics, 2600 College Street, Sherbrooke QC J1M 1Z7, Canada.}
\email{Thomas.Brustle@usherbrooke.ca}
\address{Universit\'e Denis Diderot Paris 7, Paris, France.}
\email{dupontg@math.jussieu.fr}
\address{Bull SAS, Bruy\`eres-le-Ch\^atel, France.}
\email{matthieu.perotin@bull.net}
\date{\today}
\subjclass[2010]{Primary: 13F60. Secondary: 16G20, 81T60.}
\begin{document}

\begin{abstract}
	Maximal green sequences are particular sequences of quiver mutations appearing in the context of quantum dilogarithm identities and supersymmetric gauge theory. 
	
	Interpreting maximal green sequences as paths in various natural posets arising in representation theory, we prove the finiteness of the number of maximal green sequences for cluster finite quivers, affine quivers and acyclic quivers with at most three vertices. We also give results concerning the possible numbers and lengths of these maximal green sequences.
\end{abstract}

\maketitle

\section{Introduction}
	A maximal green sequence is a certain sequence of quiver mutations given by a sequence $\i = (i_1, \ldots, i_l)$ of vertices in a quiver $Q$. The term ``\emph{maximal green sequence}'' has been coined by Keller in \cite{Keller:dilogarithm} where these sequences are used to obtain quantum dilogarithm identities and refined Donaldson-Thomas invariants. In fact, each sequence $\i$ defines an element ${\mathbb E}(\i)$ in the formal quantum affine space $\mathbb A_Q$ of $q$-commuting variables, and it has been shown in \cite{Keller:dilogarithm} that ${\mathbb E}(\i)={\mathbb E}(\i')$ whenever $\i,\i'$ are maximal green sequences of a Dynkin quiver. For the quiver $Q$ with two vertices and one arrow between them (i.e. $Q$ is of type $A_2$), there are exactly two maximal green sequences $\i$ and $\i'$ of respective lengths two and three, and in this case the equality ${\mathbb E}(\i)={\mathbb E}(\i')$ is a quantum analog of the \emph{pentagon identity} for the Rogers dilogarithm.
	Even in the non-Dynkin case, the invariants ${\mathbb E}(\i)$ can be interpreted as refined DT-invariants following work of Kontsevich-Soibelman; we refer to \cite{Keller:dilogarithm} for a thorough explanation of these ideas.

	Independently, the same sequences of quiver mutations are studied in theoretical physics where they yield the complete spectrum of BPS states, see \cite{ACCERV:BPS,CCV:Braids}.

	Maximal green sequences can also be interpreted as maximal chains in a partially ordered set that arises from a cluster exchange graph once an initial seed is fixed.
	This partial order relation has earlier been studied by Happel and Unger on a subgraph of the cluster exchange graph \cite{Unger:3vertices,Unger:simplicial,HU:tiltinghereditary}, and recently a number of representation-theoretic interpretations of the poset structure of the whole cluster exchange graph has been given \cite{Ladkani:universal,KQ:exchangeCY,KY:simpleminded,IR:tautilting}.

	The theory of cluster algebras is related to numerous other fields, and thus the cluster exchange graph can be interpreted in many ways. For instance one can view it as a generalised associahedron, which is known to carry a poset structure (the \emph{Tamari poset}). However, we will focus in this paper mainly on the combinatorial description of the poset structure by quiver mutations as given in \cite{Keller:dilogarithm}, and we intend to initiate a systematic study of maximal green sequences applying representation-theoretic techniques.

	\subsection*{Organisation of the article}
		In Section \ref{section:greenseq}, we introduce the notion of maximal green sequences in elementary terms and present some general results. When the proofs do not require any further background we present them in this section. When they do require some additional background, they are postponed to Section \ref{section:proofsgreenseq}.

		The short Section \ref{section:BPS} makes the appearance of maximal green sequences explicit in the context of theoretical physics.

		In Section \ref{section:finite}, we study maximal green sequences for quivers of finite cluster type. As before, the proofs requiring additional background are postponed to Section \ref{section:proofsfinite}. 

		Section \ref{section:infinite} presents an analysis of the maximal green sequences for acyclic quivers of infinite representation types; the corresponding proofs are found in Section \ref{section:proofsinfinite}.

		The representation-theoretical background underlying the proofs and (part of) the motivations of this article can be found in Section \ref{section:reptheory} where we recall the various connections between maximal green sequences and some classical posets in representation theory.

		In this spirit, we present in Section \ref{section:HU} additional results on the connections between maximal green sequences and the classical Happel-Unger poset of tilting modules over an algebra, see \cite{HU:tiltinghereditary}. 

		Sections \ref{section:proofsgreenseq}--\ref{section:proofsinfinite} contain the missing proofs.

		Finally, in Appendix \ref{section:examples} we present certain explicit examples which were computed with the \verb\Quiver\ \verb\Mutation Explorer\, see \cite{DupontPerotin:QME}.

\section{Green sequences}\label{section:greenseq}
	Without further specification, \emph{quivers} will always be finite connected oriented graphs and \emph{cluster quivers} will be quivers without loops or oriented 2-cycles. A quiver is called \emph{acyclic} if it has no oriented cycles. Given a quiver $Q$, we denote by $Q_0$ its set of vertices and by $Q_1$ its set of arrows.

	\subsection{Cluster algebras}
		Introduced in \cite{cluster1}, cluster algebras are commutative rings equipped with a distinguished set of generators, the \emph{cluster variables}, gathered into possibly overlapping subsets of pairwise compatible variables, the \emph{clusters}, defined recursively by a combinatorial process, the \emph{mutation}. The dynamics of this mutation process are encoded in a combinatorial datum, the \emph{exchange matrix}. 

		An \emph{exchange matrix} is a matrix $B = (b_{ij}) \in M_{n,n+m}(\Z)$ for some $m,n \geq 0$ such that the \emph{principal part} of $B$, that is, the square submatrix $B^0 = (b_{ij})_{1 \leq i,j \leq n} \in M_n(\Z)$ is \emph{skew-symmetrisable}, that is, there exists a diagonal matrix $D \in M_n(\Z)$ with positive diagonal entries such that $DB^0$ is skew-symmetric. Abusing terminology we say that $B$ itself is \emph{skew-symmetrisable}, or that it is \emph{skew-symmetric} when $B^0$ is so.

		Given a skew-symmetrisable exchange matrix $B \in M_{n,n+m}(\Z)$, we denote by $\AA_B$ the corresponding cluster algebra, see \cite{cluster4} for details.

		\begin{defi}[Matrix mutation]\label{defi:matrixmutation}
			Let $B \in M_{n,n+m}(\Z)$ be skew-symmetrisable. Then for any $1 \leq k \leq n$, the \emph{mutation of $B$ in the direction $k$} is the skew-symmetrisable matrix $\mu_k(B) = (b'_{ij}) \in M_{n,n+m}(\Z)$ given by
			$$
				b'_{ij} = \left\{\begin{array}{ll}
						-b_{ij} & \text{ if } i=k \text{ or } j=k, \\
						\displaystyle b_{ij} + [b_{ik}]_+[b_{kj}]_+-[b_{ik}]_-[b_{kj}]_- & \text{ otherwise,}
				\end{array}\right.
			$$
			where $[x]_+ = \max(x,0)$ and $[x]_- = \min(x,0)$ for any $x \in \Z$.
		\end{defi}

		It is easy to see that $\mu_k(\mu_k(B)) = B$ for any $1 \leq k \leq n$ and that $\mu_k(B)$ is skew-symmetric if and only if $B$ is skew-symmetric. In this latter case, we say that $\mathcal A_B$ is \emph{simply-laced} and it is usually more convenient to use the formalism of \emph{ice quivers} instead of exchange matrices.

	\subsection{Ice quivers and their mutations}
		An \emph{ice quiver} is a pair $(Q,F)$ where $Q$ is a cluster quiver and $F \subset Q_0$ is a (possibly empty) subset of vertices called the \emph{frozen vertices} such that there are no arrows between them. For simplicity, we always assume that $Q_0 = \ens{1, \ldots, n+m}$ and that $F=\ens{n+1, \ldots, n+m}$ for some integers $m,n \geq 0$. If $F$ is empty, we simply write $Q$ for $(Q,\emptyset)$.

		We associate to $(Q,F)$ its \emph{adjacency matrix} $B(Q,F) = (b_{ij}) \in M_{n,n+m}(\Z)$ such that
		$$b_{ij} = |\ens{i \fl j \in Q_1}| - |\ens{j \fl i \in Q_1}|$$
		for any $1 \leq i \leq n$ and any $1 \leq j \leq n+m$.

		The map $(Q,F) \mapsto B(Q,F)$ induces a bijection from the set of ice quivers to the set of skew-symmetric exchange matrices. Therefore, to any ice quiver $(Q,F)$ we can associate the cluster algebra $\mathcal A_{(Q,F)} = \mathcal A_{B(Q,F)}$. 

		\begin{defi}[Quiver mutation]
			Let $(Q,F)$ be an ice quiver and $k \in Q_0$ be a non-frozen vertex. The \emph{mutation of $Q$ at $k$} is defined as the ice quiver $(\mu_k(Q),F)$ where $\mu_k(Q)$ is obtained from $Q$ by applying the following modifications:
			\begin{enumerate}
				\item For any pair of arrows $i \xrightarrow{a} k \xrightarrow{b} j$ in $Q$, add an arrow $i \xrightarrow{[ab]} j$ in $\mu_k(Q)$;
				\item Any arrow $i \xrightarrow{a} k$ in $Q$ is replaced by an arrow $i \xleftarrow{a^*} k$ in $\mu_k(Q)$;
				\item Any arrow $k \xrightarrow{b} j$ in $Q$ is replaced by an arrow $k \xleftarrow{b^*} j$ in $\mu_k(Q)$;
				\item A maximal collection of 2-cycles is removed as well as all the arrows between frozen vertices.
			\end{enumerate}
		\end{defi}

		Then it is easy to see that for any non-frozen vertex $k \in Q_0$, the ice quiver $\mu_k(Q,F)$ is the ice quiver corresponding to the skew-symmetric matrix $\mu_k(B(Q,F))$.

		\begin{exmp}
			Figure \ref{fig:exmpmut} shows an example of successive quiver mutations.
			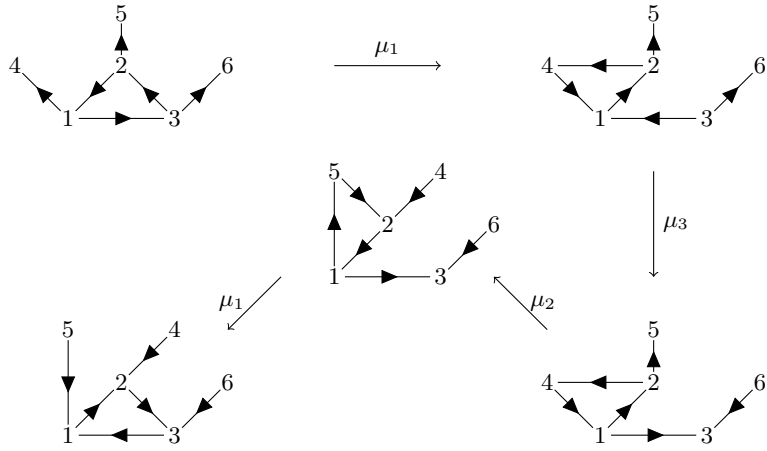
\begin{figure}[htb]
				\begin{center}
					\begin{tikzpicture}[scale = .7]
						\tikzstyle{every node} = [font = \small]
						\begin{scope}[decoration={
							markings,
							mark=at position 0.6 with {\arrow{triangle 45}}}
							] 
							\foreach \x in {0}
							{
								\foreach \y in {0}
								{
									
									\coordinate (1) at (-1+\x,0+\y);
									\coordinate (2) at (0+\x,1+\y);
									\coordinate (3) at (1+\x,0+\y);
									\coordinate (4) at (-2+\x,1+\y);
									\coordinate (5) at (0+\x,2+\y);
									\coordinate (6) at (2+\x,1+\y);

									\draw[postaction={decorate}] (2) -- (1);
									\draw[postaction={decorate}] (3) -- (2);
									\draw[postaction={decorate}] (1) -- (3);

									\draw[postaction={decorate}] (1) -- (4);
									\draw[postaction={decorate}] (2) -- (5);
									\draw[postaction={decorate}] (3) -- (6);

									\fill[white] (1) circle (.2);
									\fill (1) node {1};
									\fill[white] (2) circle (.2);
									\fill (2) node {2};
									\fill[white] (3) circle (.2);
									\fill (3) node {3};
									\fill[white] (4) circle (.2);
									\fill (4) node {4};
									\fill[white] (5) circle (.2);
									\fill (5) node {5};
									\fill[white] (6) circle (.2);
									\fill (6) node {6};

								}
							}

							\draw[->] (4,1) -- (6,1);
							\fill (5,1) node [above] {$\mu_1$};

							\foreach \x in {10}
							{
								\foreach \y in {0}
								{
									
									\coordinate (1) at (-1+\x,0+\y);
									\coordinate (2) at (0+\x,1+\y);
									\coordinate (3) at (1+\x,0+\y);
									\coordinate (4) at (-2+\x,1+\y);
									\coordinate (5) at (0+\x,2+\y);
									\coordinate (6) at (2+\x,1+\y);

									\draw[postaction={decorate}] (1) -- (2);
									\draw[postaction={decorate}] (3) -- (1);

									\draw[postaction={decorate}] (4) -- (1);
									\draw[postaction={decorate}] (2) -- (4);
									\draw[postaction={decorate}] (2) -- (5);
									\draw[postaction={decorate}] (3) -- (6);

									\fill[white] (1) circle (.2);
									\fill (1) node {1};
									\fill[white] (2) circle (.2);
									\fill (2) node {2};
									\fill[white] (3) circle (.2);
									\fill (3) node {3};
									\fill[white] (4) circle (.2);
									\fill (4) node {4};
									\fill[white] (5) circle (.2);
									\fill (5) node {5};
									\fill[white] (6) circle (.2);
									\fill (6) node {6};

								}
							}

							\draw[->] (10,-1) -- (10,-3);
							\fill (10,-2) node [right] {$\mu_3$};

							\foreach \x in {10}
							{
								\foreach \y in {-6}
								{
									
									\coordinate (1) at (-1+\x,0+\y);
									\coordinate (2) at (0+\x,1+\y);
									\coordinate (3) at (1+\x,0+\y);
									\coordinate (4) at (-2+\x,1+\y);
									\coordinate (5) at (0+\x,2+\y);
									\coordinate (6) at (2+\x,1+\y);

									\draw[postaction={decorate}] (1) -- (2);
									\draw[postaction={decorate}] (1) -- (3);

									\draw[postaction={decorate}] (4) -- (1);
									\draw[postaction={decorate}] (2) -- (4);
									\draw[postaction={decorate}] (2) -- (5);
									\draw[postaction={decorate}] (6) -- (3);

									\fill[white] (1) circle (.2);
									\fill (1) node {1};
									\fill[white] (2) circle (.2);
									\fill (2) node {2};
									\fill[white] (3) circle (.2);
									\fill (3) node {3};
									\fill[white] (4) circle (.2);
									\fill (4) node {4};
									\fill[white] (5) circle (.2);
									\fill (5) node {5};
									\fill[white] (6) circle (.2);
									\fill (6) node {6};

								}
							}

							\draw[->] (8,-4) -- (7,-3);
							\fill (7.5,-3.5) node [right] {$\mu_2$};

							\foreach \x in {5}
							{
								\foreach \y in {-3}
								{
									
									\coordinate (1) at (-1+\x,0+\y);
									\coordinate (2) at (0+\x,1+\y);
									\coordinate (3) at (1+\x,0+\y);
									\coordinate (4) at (1+\x,2+\y);
									\coordinate (5) at (-1+\x,2+\y);
									\coordinate (6) at (2+\x,1+\y);

									\draw[postaction={decorate}] (2) -- (1);
									\draw[postaction={decorate}] (1) -- (3);

									\draw[postaction={decorate}] (1) -- (5);
									\draw[postaction={decorate}] (5) -- (2);
									\draw[postaction={decorate}] (4) -- (2);
									\draw[postaction={decorate}] (6) -- (3);

									\fill[white] (1) circle (.2);
									\fill (1) node {1};
									\fill[white] (2) circle (.2);
									\fill (2) node {2};
									\fill[white] (3) circle (.2);
									\fill (3) node {3};
									\fill[white] (4) circle (.2);
									\fill (4) node {4};
									\fill[white] (5) circle (.2);
									\fill (5) node {5};
									\fill[white] (6) circle (.2);
									\fill (6) node {6};

								}
							}

							\draw[->] (3,-3) -- (2,-4);
							\fill (2.5,-3.5) node [left] {$\mu_1$};

							\foreach \x in {0}
							{
								\foreach \y in {-6}
								{
									
									\coordinate (1) at (-1+\x,0+\y);
									\coordinate (2) at (0+\x,1+\y);
									\coordinate (3) at (1+\x,0+\y);
									\coordinate (4) at (1+\x,2+\y);
									\coordinate (5) at (-1+\x,2+\y);
									\coordinate (6) at (2+\x,1+\y);

									\draw[postaction={decorate}] (1) -- (2);
									\draw[postaction={decorate}] (2) -- (3);
									\draw[postaction={decorate}] (3) -- (1);

									\draw[postaction={decorate}] (5) -- (1);
									\draw[postaction={decorate}] (4) -- (2);
									\draw[postaction={decorate}] (6) -- (3);

									\fill[white] (1) circle (.2);
									\fill (1) node {1};
									\fill[white] (2) circle (.2);
									\fill (2) node {2};
									\fill[white] (3) circle (.2);
									\fill (3) node {3};
									\fill[white] (4) circle (.2);
									\fill (4) node {4};
									\fill[white] (5) circle (.2);
									\fill (5) node {5};
									\fill[white] (6) circle (.2);
									\fill (6) node {6};

								}
							}
						\end{scope}
					\end{tikzpicture}
				\end{center}
				\caption{An example of quiver mutations.}\label{fig:exmpmut}
			\end{figure}
		\end{exmp}

		Two ice quivers are called \emph{mutation-equivalent} if one can be obtained from the other by applying a finite number of successive mutations at non-frozen vertices. Since mutations are involutive, this defines an equivalence relation on the set of ice quivers. The equivalence class of an ice quiver $(Q,F)$ is called its \emph{mutation class} and is denoted by $\Mut(Q,F)$.

		Two ice quivers $(Q,F)$ and $(Q',F)$ sharing the same set of frozen vertices are called \emph{isomorphic as ice quivers} if there is an isomorphism of quivers $\phi: Q \fl Q'$ fixing $F$. In this case, we write $(Q,F) \simeq (Q',F)$ and we denote by $[(Q,F)]$ the isomorphism class of the ice quiver $(Q,F)$.

	\subsection{Green sequences}
		From now on, $Q$ will always denote a cluster quiver and we fix a copy $Q_0' = \ens{i' \ | \ i \in Q_0}$ of the set $Q_0$ of vertices in $Q$. We will identify $Q_0$ with the set of integers $\ens{1, \ldots, n}$ and $Q_0'$ with $\ens{n+1, \ldots, 2n}$ in such a way that for any $1 \leq i \leq n$, we have $i' = n+i$.

		\begin{defi}[Framed and coframed quivers]
			The \emph{framed quiver} associated with $Q$ is the quiver $\h Q$ such that:
			\begin{align*}
				\h Q_0 & = Q_0 \sqcup \ens{i' \ | \ i \in Q_0},\\
				\h Q_1 & = Q_1 \sqcup \ens{i \fl i' \ | \ i \in Q_0}.
			\end{align*}

			The \emph{coframed quiver} associated with $Q$ is the quiver $\v Q$ such that:
			\begin{align*}
				\v Q_0 & = Q_0 \sqcup \ens{i' \ | \ i \in Q_0},\\
				\v Q_1 & = Q_1 \sqcup \ens{i' \fl i \ | \ i \in Q_0}.
			\end{align*}
		\end{defi}

		If $Q$ is an arbitrary cluster quiver, both $\h Q$ and $\v Q$ are naturally ice quivers with frozen vertices $Q_0'$. Therefore, by $\Mut(\h Q)$ we always mean the mutation class of the ice quiver $(\h Q, Q_0')$. 

		\begin{defi}[Green and red vertices]
			Let $R \in \Mut(\h Q)$. A non-frozen vertex $i \in R_0$ is called \emph{green} if 
			$$\ens{j' \in Q_0' \ | \ \exists\, j' \fl i \in R_1} = \emptyset.$$
			It is called \emph{red}
			if 
			$$\ens{j' \in Q_0' \ | \ \exists\, i \fl j' \in R_1} = \emptyset.$$
		\end{defi}

		If $R$ is an ice quiver in $\Mut(\h Q)$ with adjacency matrix $B= (b_{ij}) \in M_{n,2n}(\Z)$, the submatrix $\c(R) = (b_{i,n+j})_{1 \leq i,j \leq n}$ is called the \emph{$\c$-matrix} of $R$. For any non-frozen vertex $i \in Q_0$, its $i$th row $\c_i(R)$ is called the \emph{$i$th $\c$-vector} of $R$ and it encodes the number of arrows between $i$ and the frozen vertices in $R$. For instance, we have $\c(\h Q) = I_n$ and $\c(\v Q) = -I_n$. For more details on $\c$-vectors, we refer the reader to \cite{cluster4} where they were introduced and to \cite{NZ:tropicalduality,Najera:cvectors,ST:acyclic,Nagao:DTcluster,Keller:derivedcluster} where they were studied.

		With this terminology, for a quiver $R \in \Mut(\h Q)$, a vertex $i \in Q_0$ is green if and only if the $i$th $\c$-vector $\c_i(R)$ has only non-negative entries and it is red if and only if $\c_i(R)$ has only non-positive entries.

		Given a quiver $R \in \Mut(\h Q)$, we denote by $g(R)$ the number of green vertices in $R$. Note that this number only depends on $[R]$ so that we set $g([R])=g(R)$.

		\begin{theorem}\label{theorem:greenorred}
			Let $Q$ be a cluster quiver and $R \in \Mut(\h Q)$. Then any non-frozen vertex in $R_0$ is either green or red.
		\end{theorem}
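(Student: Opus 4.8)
The plan is to show that the sign-coherence of $\c$-vectors for quivers in $\Mut(\h Q)$ forces the dichotomy. The key fact — due to Derksen--Weyman--Zelevinsky for skew-symmetric exchange matrices, and now a theorem of Gross--Hacking--Keel--Kontsevich in full generality — is that for every $R \in \Mut(\h Q)$ and every non-frozen vertex $i \in Q_0$, the $\c$-vector $\c_i(R) \in \Z^n$ has either all entries $\geq 0$ or all entries $\leq 0$. Granting this, recall from the discussion just above the statement that $i$ is green precisely when $\c_i(R)$ has only non-negative entries and red precisely when $\c_i(R)$ has only non-positive entries; hence sign-coherence immediately gives that each $i$ is green or red. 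The only thing left to check is that these two possibilities genuinely partition the vertices, i.e.\ that no $\c$-vector is zero, so that ``green'' and ``red'' are not simultaneously attained in a degenerate way and every vertex falls into exactly one of the two classes.

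So the substantive step I would actually carry out is: for $R \in \Mut(\h Q)$, no $\c$-vector $\c_i(R)$ vanishes. First I would observe that $\c(\h Q) = I_n$ is invertible, and then invoke the (tropical/mutation) recursion for $\c$-matrices along a mutation sequence: mutation in direction $k$ transforms the $\c$-matrix by $\c_j \mapsto -\c_k$ if $j = k$, and $\c_j \mapsto \c_j + [b_{jk}]_+\c_k - [b_{jk}]_-\c_k$ (with the sign of the correction determined by the sign of the common sign of $\c_k$, using sign-coherence) otherwise — in any case an invertible integral transformation. Consequently $\c(R)$ remains invertible for every $R \in \Mut(\h Q)$, so in particular it has no zero row; thus each $\c_i(R) \neq 0$ and, being sign-coherent, is either strictly in the ``green'' region or strictly in the ``red'' region, never both. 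This establishes the theorem.

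I expect the main obstacle to be purely expository rather than mathematical: the statement is essentially equivalent to sign-coherence of $\c$-vectors, so the real work is to cite the correct form of that result at the appropriate level of generality (skew-symmetric suffices here, since $\h Q$ is a genuine ice quiver) and to present the short invertibility argument for non-vanishing cleanly. An alternative route, which avoids quoting sign-coherence as a black box, is representation-theoretic: identify $\c_i(R)$, up to sign, with the dimension vector of an indecomposable rigid object (a ``brick'' / negative simple) in the associated $2$-Calabi--Yau or cluster category, where positivity of the $\c$-vector corresponds to the object lying in the heart and negativity to its shift; the torsion-pair / $t$-structure formalism then yields the dichotomy directly. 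I would mention this viewpoint but defer the details to the representation-theoretic section, since for the present statement the combinatorial argument via sign-coherence and invertibility of the $\c$-matrix is the most economical.
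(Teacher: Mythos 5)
Your proposal is correct and takes essentially the same route as the paper: the paper's proof consists precisely of reducing the statement to the sign-coherence of $\c$-vectors (each row of the $\c$-matrix is non-zero with all entries of one sign) and citing Derksen--Weyman--Zelevinsky for the skew-symmetric case, which applies here since $\h Q$ gives a skew-symmetric exchange matrix. The only cosmetic difference is that the paper treats the non-vanishing of the $\c$-vectors as part of the cited result, whereas you supply a short (and correct) invertibility-of-the-$\c$-matrix argument for it.
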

		\begin{proof}
			Let $R \in \Mut(\h Q)$. We need to prove that each row of the $\c$-matrix of $R$ is non-zero and that its entries are either all non-negative or all non-positive. This result, known as the \emph{sign-coherence for $\c$-vectors}, was established in the case of skew-symmetric exchange matrices in \cite{DWZ:potentials2}.
		\end{proof}
		
		For skew-symmetrisable exchange matrices the sign-coherence for $\c$-vectors is still conjectural, and so is the non-simply-laced analogue of Theorem \ref{theorem:greenorred}. For partial results concerning this, one might refer to \cite{Demonet:species}.

		\begin{exmp}
			In $\h Q$, every non frozen vertex is green. In $\v Q$, any non-frozen vertex is red.
		\end{exmp}

		\begin{defi}[Green sequences, \cite{Keller:dilogarithm}]
			A \emph{green sequence for $Q$} is a sequence $\i = (i_1, \ldots, i_l) \subset Q_0$ such that $i_1$ is green in $\h Q$ and for any $2 \leq k \leq l$, the vertex $i_k$ is green in $\mu_{i_{k-1}} \circ \cdots \circ \mu_{i_1}(\h Q)$. The integer $l$ is called the \emph{length} of the sequence $\i$ and is denoted by $\ell(\i)$.

			A green sequence $\i = (i_1, \ldots, i_l)$ is called \emph{maximal} if every non-frozen vertex in $\mu_\i(\h Q)$ is red, where $\mu_\i(\h Q) = \mu_{i_l} \circ \cdots \circ \mu_{i_1}(\h Q)$.

			We denote by
			$$\green(Q) = \ens{\i=(i_1, \ldots, i_l) \subset Q_0 \ | \ \i \text{ is a maximal green sequence for }Q}$$
			the set of all maximal green sequences for $Q$.
		\end{defi}
	
		\begin{exmp}
			Figure \ref{fig:exmpgreen} shows that the sequence of mutations considered in Figure \ref{fig:exmpmut} is a maximal green sequence for the oriented triangle. Frozen vertices are coloured in white, green vertices in green and red vertices in red.

			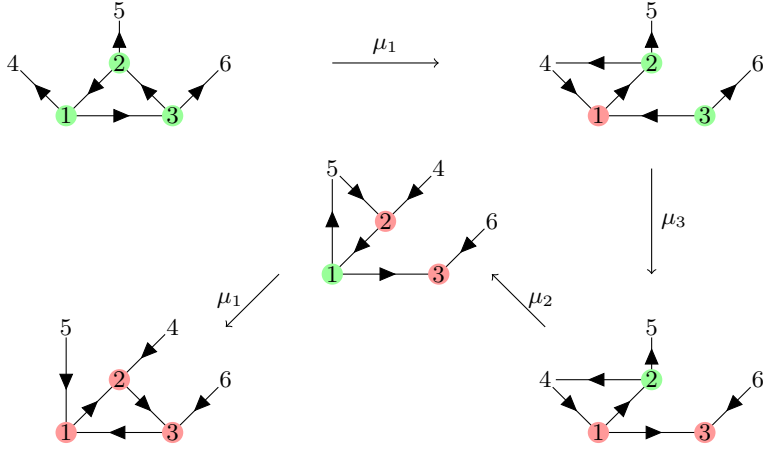
\begin{figure}[htb]
				\begin{center}
					\begin{tikzpicture}[scale = .7]
						\tikzstyle{every node} = [font = \small]
						\begin{scope}[decoration={
							markings,
							mark=at position 0.6 with {\arrow{triangle 45}}}
							] 
							\foreach \x in {0}
							{
								\foreach \y in {0}
								{
									
									\coordinate (1) at (-1+\x,0+\y);
									\coordinate (2) at (0+\x,1+\y);
									\coordinate (3) at (1+\x,0+\y);
									\coordinate (4) at (-2+\x,1+\y);
									\coordinate (5) at (0+\x,2+\y);
									\coordinate (6) at (2+\x,1+\y);

									\draw[postaction={decorate}] (2) -- (1);
									\draw[postaction={decorate}] (3) -- (2);
									\draw[postaction={decorate}] (1) -- (3);

									\draw[postaction={decorate}] (1) -- (4);
									\draw[postaction={decorate}] (2) -- (5);
									\draw[postaction={decorate}] (3) -- (6);

									\fill[green!40] (1) circle (.2);
									\fill (1) node {1};
									\fill[green!40] (2) circle (.2);
									\fill (2) node {2};
									\fill[green!40] (3) circle (.2);
									\fill (3) node {3};
									\fill[white] (4) circle (.2);
									\fill (4) node {4};
									\fill[white] (5) circle (.2);
									\fill (5) node {5};
									\fill[white] (6) circle (.2);
									\fill (6) node {6};

								}
							}

							\draw[->] (4,1) -- (6,1);
							\fill (5,1) node [above] {$\mu_1$};

							\foreach \x in {10}
							{
								\foreach \y in {0}
								{
									
									\coordinate (1) at (-1+\x,0+\y);
									\coordinate (2) at (0+\x,1+\y);
									\coordinate (3) at (1+\x,0+\y);
									\coordinate (4) at (-2+\x,1+\y);
									\coordinate (5) at (0+\x,2+\y);
									\coordinate (6) at (2+\x,1+\y);

									\draw[postaction={decorate}] (1) -- (2);
									\draw[postaction={decorate}] (3) -- (1);

									\draw[postaction={decorate}] (4) -- (1);
									\draw[postaction={decorate}] (2) -- (4);
									\draw[postaction={decorate}] (2) -- (5);
									\draw[postaction={decorate}] (3) -- (6);

									\fill[red!40] (1) circle (.2);
									\fill (1) node {1};
									\fill[green!40] (2) circle (.2);
									\fill (2) node {2};
									\fill[green!40] (3) circle (.2);
									\fill (3) node {3};
									\fill[white] (4) circle (.2);
									\fill (4) node {4};
									\fill[white] (5) circle (.2);
									\fill (5) node {5};
									\fill[white] (6) circle (.2);
									\fill (6) node {6};

								}
							}

							\draw[->] (10,-1) -- (10,-3);
							\fill (10,-2) node [right] {$\mu_3$};

							\foreach \x in {10}
							{
								\foreach \y in {-6}
								{
									
									\coordinate (1) at (-1+\x,0+\y);
									\coordinate (2) at (0+\x,1+\y);
									\coordinate (3) at (1+\x,0+\y);
									\coordinate (4) at (-2+\x,1+\y);
									\coordinate (5) at (0+\x,2+\y);
									\coordinate (6) at (2+\x,1+\y);

									\draw[postaction={decorate}] (1) -- (2);
									\draw[postaction={decorate}] (1) -- (3);

									\draw[postaction={decorate}] (4) -- (1);
									\draw[postaction={decorate}] (2) -- (4);
									\draw[postaction={decorate}] (2) -- (5);
									\draw[postaction={decorate}] (6) -- (3);

									\fill[red!40] (1) circle (.2);
									\fill (1) node {1};
									\fill[green!40] (2) circle (.2);
									\fill (2) node {2};
									\fill[red!40] (3) circle (.2);
									\fill (3) node {3};
									\fill[white] (4) circle (.2);
									\fill (4) node {4};
									\fill[white] (5) circle (.2);
									\fill (5) node {5};
									\fill[white] (6) circle (.2);
									\fill (6) node {6};

								}
							}

							\draw[->] (8,-4) -- (7,-3);
							\fill (7.5,-3.5) node [right] {$\mu_2$};

							\foreach \x in {5}
							{
								\foreach \y in {-3}
								{
									
									\coordinate (1) at (-1+\x,0+\y);
									\coordinate (2) at (0+\x,1+\y);
									\coordinate (3) at (1+\x,0+\y);
									\coordinate (4) at (1+\x,2+\y);
									\coordinate (5) at (-1+\x,2+\y);
									\coordinate (6) at (2+\x,1+\y);

									\draw[postaction={decorate}] (2) -- (1);
									\draw[postaction={decorate}] (1) -- (3);

									\draw[postaction={decorate}] (1) -- (5);
									\draw[postaction={decorate}] (5) -- (2);
									\draw[postaction={decorate}] (4) -- (2);
									\draw[postaction={decorate}] (6) -- (3);

									\fill[green!40] (1) circle (.2);
									\fill (1) node {1};
									\fill[red!40] (2) circle (.2);
									\fill (2) node {2};
									\fill[red!40] (3) circle (.2);
									\fill (3) node {3};
									\fill[white] (4) circle (.2);
									\fill (4) node {4};
									\fill[white] (5) circle (.2);
									\fill (5) node {5};
									\fill[white] (6) circle (.2);
									\fill (6) node {6};

								}
							}

							\draw[->] (3,-3) -- (2,-4);
							\fill (2.5,-3.5) node [left] {$\mu_1$};

							\foreach \x in {0}
							{
								\foreach \y in {-6}
								{
									
									\coordinate (1) at (-1+\x,0+\y);
									\coordinate (2) at (0+\x,1+\y);
									\coordinate (3) at (1+\x,0+\y);
									\coordinate (4) at (1+\x,2+\y);
									\coordinate (5) at (-1+\x,2+\y);
									\coordinate (6) at (2+\x,1+\y);

									\draw[postaction={decorate}] (1) -- (2);
									\draw[postaction={decorate}] (2) -- (3);
									\draw[postaction={decorate}] (3) -- (1);

									\draw[postaction={decorate}] (5) -- (1);
									\draw[postaction={decorate}] (4) -- (2);
									\draw[postaction={decorate}] (6) -- (3);

									\fill[red!40] (1) circle (.2);
									\fill (1) node {1};
									\fill[red!40] (2) circle (.2);
									\fill (2) node {2};
									\fill[red!40] (3) circle (.2);
									\fill (3) node {3};
									\fill[white] (4) circle (.2);
									\fill (4) node {4};
									\fill[white] (5) circle (.2);
									\fill (5) node {5};
									\fill[white] (6) circle (.2);
									\fill (6) node {6};

								}
							}
						\end{scope}
					\end{tikzpicture}
				\end{center}
				\caption{An example of a maximal green sequence.}\label{fig:exmpgreen}
			\end{figure}
		\end{exmp}

		We refer the reader willing to compute more examples to Bernhard Keller's java applet \cite{Keller:javaapplet} or to the \verb\Quiver Mutation Explorer\ \cite{DupontPerotin:QME}.

	\subsection{The oriented exchange graph}
		The following proposition will be proved in Section \ref{section:proofsgreenseq}.
		\begin{prop}\label{prop:uniquered}
			Let $Q$ be a cluster quiver and let $R \in \Mut(\h Q)$.
			\begin{enumerate}
				\item If all the non-frozen vertices in $R_0$ are green, then $R \simeq \h Q$ as ice quivers.
				\item If all the non-frozen vertices in $R_0$ are red, then $R \simeq \v Q$ as ice quivers.
			\end{enumerate}
		\end{prop}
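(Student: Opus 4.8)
The plan is to recast everything in terms of the $\c$-matrix and then to exploit tropical duality with $\mathbf g$-vectors. By the reformulation recorded just before Theorem~\ref{theorem:greenorred}, a non-frozen vertex $i$ of $R$ is green (resp.\ red) precisely when the row $\c_i(R)$ of $\c(R)$ has only non-negative (resp.\ only non-positive) entries. Thus (1) says: if $\c(R)\ge 0$ entrywise, then $R\simeq\h Q$; and (2) says: if $\c(R)\le 0$ entrywise, then $R\simeq\v Q$. Since (2) is obtained from (1) by reversing every inequality — one ends up with $\c(R)=-P$ for a permutation matrix $P$, hence with a seed whose $\c$-matrix is $-I_n$, namely the coframed seed $\v Q$ — I would prove (1) and then obtain (2) by reversing all inequalities.

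First I would invoke two standard facts about the $\c$-matrix $C:=\c(R)$ of an arbitrary $R\in\Mut(\h Q)$: one has $C\in GL_n(\Z)$, and, by tropical duality \cite{NZ:tropicalduality}, $C^{-1}=G(R)^{T}$ where $G(R)$ is the $\mathbf g$-matrix of $R$; moreover the $\mathbf g$-vectors of $R$ are sign-coherent (again part of the sign-coherence package for skew-symmetric exchange matrices, cf.\ Theorem~\ref{theorem:greenorred} and \cite{DWZ:potentials2}), so every row of $C^{-1}$ is non-negative or non-positive. This ``dual'' sign-coherence is the key extra ingredient: Theorem~\ref{theorem:greenorred} alone does not suffice, since an entrywise non-negative matrix in $GL_n(\Z)$ need not be a permutation matrix, e.g.\ $\left(\begin{smallmatrix}1&1\\0&1\end{smallmatrix}\right)$.

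The heart of the matter is then a short positivity argument. Assume $C\ge 0$, $C\in GL_n(\Z)$, and every row of $C^{-1}$ sign-coherent; write the $i$-th row of $C^{-1}$ as $\varepsilon_i w_i$ with $w_i\ge 0$, $w_i\ne 0$ and $\varepsilon_i\in\{\pm 1\}$. Reading off the $(i,i)$-entry of $C^{-1}C=I_n$ gives $\varepsilon_i\langle w_i,\,\mathrm{col}_i(C)\rangle=1$; since $w_i\ge 0$ and $\mathrm{col}_i(C)\ge 0$ the pairing is $\ge 0$, which forces $\varepsilon_i=1$ for all $i$. Hence $C^{-1}\ge 0$ as well, and two mutually inverse non-negative integer matrices are permutation matrices; so $C=P_\sigma$ for some permutation $\sigma$.

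It remains to reconstruct $R$ from $C=P_\sigma$. By tropical duality again $G(R)=(C^{-1})^{T}=P_\sigma$, so the $\mathbf g$-vector of the $j$-th cluster variable of $R$ is $e_{\sigma^{-1}(j)}$; since a cluster variable is determined by its $\mathbf g$-vector and the initial variable $x_{\sigma^{-1}(j)}$ already realises this $\mathbf g$-vector, the cluster of $R$ is the initial cluster $\{x_1,\dots,x_n\}$ up to reordering. As a seed is determined by its cluster, $R$ coincides with $\h Q$ after relabelling non-frozen vertices by $\sigma$, i.e.\ $R\simeq\h Q$ as ice quivers; running the sign-reversed chain of implications yields $R\simeq\v Q$ in case (2). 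I expect the only real obstacle to be the step flagged above — that the bare sign-coherence of Theorem~\ref{theorem:greenorred} is too weak — so the genuine content is the passage through $\mathbf g$-vectors (equivalently, the sign-coherence of the inverse $\c$-matrix), after which non-negativity collapses $\c(R)$ to a permutation matrix. In the categorical language recalled later in the article this is just the statement that, up to shift, the standard heart is the only bounded heart all of whose simple objects lie on one side; but nothing beyond Theorem~\ref{theorem:greenorred} and standard cluster-algebra facts is needed.
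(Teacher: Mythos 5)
Your argument is a genuinely different route from the paper's proof, which fixes a generic potential, passes to the canonical heart $\HH$ of the associated Ginzburg dg-algebra and deduces both statements from the fact that a heart of the pattern all of whose simples lie in $\HH$ (resp.\ in $\Sigma\HH$) must equal $\HH$ (resp.\ $\Sigma\HH$), together with a result of Keller identifying the corresponding ice quivers. For part (1) your combinatorial argument is essentially sound: the positivity computation showing that a non-negative matrix $C=\c(R)\in GL_n(\Z)$ whose inverse has sign-coherent rows must be a permutation matrix is correct, and the inputs you need (invertibility of $\c$-matrices, sign-coherence of $\mathbf g$-vectors and tropical duality \cite{NZ:tropicalduality}, uniqueness of a cluster variable with a given $\mathbf g$-vector, and the fact that a seed is determined by its cluster) are all available in the skew-symmetric case; be aware, though, that the formula $\c(R)^{-1}=G(R)^{T}$ is convention-sensitive --- with the paper's row convention for $\c$-vectors it only holds up to a transpose/opposite-quiver adjustment, which is harmless for your argument but should be stated precisely.

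The genuine gap is in part (2), which is \emph{not} obtained from part (1) by reversing every inequality. The matrix half does dualise: you get $\c(R)=-P_\sigma$. The reconstruction half does not. In case (1) you identified the non-frozen part of $R$ with $Q$ by noting that the initial cluster variables already realise the $\mathbf g$-vectors $e_j$, so uniqueness of cluster variables with given $\mathbf g$-vector forces the seed at $R$ to be the initial seed up to relabelling. The sign-reversed argument would need a seed, known in advance, realising the $\mathbf g$-vectors $-e_j$ (equivalently with $\c$-matrix $-I_n$); but the existence of such a seed in $\Mut(\h Q)$ is exactly what part (2) asserts, and it can fail outright (for the quiver of Proposition \ref{prop:doubletriangle} the coframed quiver is not reachable by any sequence of mutations), so there is nothing given to compare $R$ with. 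Concretely, $\c(R)=-P_\sigma$ only pins down the arrows between non-frozen and frozen vertices of $R$; the remaining content of $R\simeq\v Q$ is that the full subquiver on the non-frozen vertices is $Q$ relabelled by $\sigma$, and your proposal gives no argument for this. To close the gap you need an extra input, for instance an identity of Nakanishi--Zelevinsky type expressing the exchange matrix $B(R)$ in terms of $B(Q)$ and $\c(R)$ (with $\c(R)=\pm P_\sigma$ this immediately yields the required identification, and would also let you bypass the cluster-variable step in part (1)), or the categorical identification via hearts that the paper uses.
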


		\begin{defi}[Oriented exchange graph]
			The \emph{oriented exchange graph} of $Q$ is the oriented graph $\oo\E(Q)$ whose vertices are the isomorphism classes $[R]$ of ice quivers $R \in \Mut(\h Q)$ and where there is an arrow $[R] \fl [R']$ in $\oo \E (Q)$ if and only if there exists a green vertex $k \in R_0$ such that $\mu_k(R) \simeq R'$.
		\end{defi}

		In \cite{cluster2}, Fomin and Zelevinsky introduced the (unoriented) \emph{exchange graph} of $Q$ as the dual graph $\E(Q)$ of the cluster complex $\Delta(\AA_Q)$ of the cluster algebra $\AA_Q$ associated with $Q$. Vertices in $\E(Q)$ are labelled by the clusters in $\AA_Q$ and two clusters in $\E(Q)$ are joined by an edge if and only if they differ by a single cluster variable. Then $\oo\E(Q)$ is an orientation of $\E(Q)$ corresponding to the choice of an initial seed in $\AA_Q$ with exchange matrix $B(Q)$. The orientation is defined as follows. Let $\x$ and $\x'$ be two adjacent clusters in $\E(Q)$ corresponding respectively to $[R]$ and $[R']$ in $\oo\E(Q)$. Assume that $\x$ and $\x'$ differ by a single cluster variable $x_i$, so that $R' \simeq \mu_i(R)$. Then the edge joining $\x$ and $\x'$ in $\E(Q)$ is oriented towards $\x'$ if $i$ is green in $R$ and towards $\x$ otherwise.

		As $\E(Q)$ is an $n$-regular graph, if $[R]$ is a vertex in $\oo\E(Q)$, then there are $g([R])$ arrows starting at $[R]$ in $\oo\E(Q)$ and $n-g([R])$ arrows ending at $[R]$ in $\oo\E(Q)$ (which, by Theorem \ref{theorem:greenorred}, correspond to the red vertices in $R$).

		\begin{corol}\label{corol:uniquesource}
			Let $Q$ be a cluster quiver. Then:
			\begin{enumerate}
				\item $\oo\E(Q)$ has a unique source, which is $[\h Q]$.
				\item $\oo\E(Q)$ has a sink if and only if $[\v Q]$ is a vertex in $\oo\E(Q)$ and in this case $[\v Q]$ is the unique sink.
			\end{enumerate}
		\end{corol}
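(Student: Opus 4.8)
The plan is to deduce both statements directly from the arrow-counting observation recorded just before the corollary, combined with Proposition \ref{prop:uniquered}. Recall from that discussion that, since $\E(Q)$ is $n$-regular and the orientation of an edge at $[R]$ points away from $[R]$ exactly when the corresponding mutation is at a green vertex, there are precisely $g([R])$ arrows of $\oo\E(Q)$ starting at $[R]$ and, by Theorem \ref{theorem:greenorred}, precisely $n-g([R])$ arrows ending at $[R]$ (one for each red vertex of $R$). Consequently $[R]$ is a source of $\oo\E(Q)$ if and only if $n-g([R])=0$, that is, every non-frozen vertex of $R$ is green; and $[R]$ is a sink of $\oo\E(Q)$ if and only if $g([R])=0$, that is, every non-frozen vertex of $R$ is red.

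For part (1): by Proposition \ref{prop:uniquered}(1), a quiver $R \in \Mut(\h Q)$ has all non-frozen vertices green if and only if $R \simeq \h Q$ as ice quivers. Since in $\h Q$ every non-frozen vertex is green (the example following Theorem \ref{theorem:greenorred}), the vertex $[\h Q]$, which of course belongs to $\oo\E(Q)$, is a source; and by the preceding paragraph it is the only one.

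For part (2): by Proposition \ref{prop:uniquered}(2), a quiver $R \in \Mut(\h Q)$ has all non-frozen vertices red if and only if $R \simeq \v Q$. Hence $\oo\E(Q)$ admits a sink if and only if some $R \in \Mut(\h Q)$ is isomorphic to $\v Q$ as an ice quiver, i.e. if and only if $[\v Q]$ is a vertex of $\oo\E(Q)$; and whenever this occurs, $[\v Q]$ is a sink and, by the same argument, the unique one.

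I do not expect a genuine obstacle here. The only point requiring care is to make sure the arrow count invoked is exactly the one established in the paragraph preceding the corollary, so that ``source'' and ``sink'' translate cleanly into ``$R$ has no red vertex'' and ``$R$ has no green vertex'' respectively; after that, Theorem \ref{theorem:greenorred} and Proposition \ref{prop:uniquered} do all the work, and no computation is needed.
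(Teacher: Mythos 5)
Your argument is correct and follows essentially the same route as the paper: translate ``source'' and ``sink'' into ``all non-frozen vertices green'' and ``all non-frozen vertices red'' via the $n$-regularity of $\E(Q)$ and the arrow count, then apply Proposition \ref{prop:uniquered} to identify these with $[\h Q]$ and $[\v Q]$. No gaps.
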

		\begin{proof}
			$[\h Q]$ belongs to $\oo\E(Q)$ by construction and it is a source in $\oo\E(Q)$ since all the vertices in $\h Q$ are green. If $[R]$ is another source, then all the vertices in $R$ are green and therefore it follows from Proposition \ref{prop:uniquered} that $R \simeq \h Q$, proving the first point. Now if $[R]$ is a sink in $\oo\E(Q)$, then all its vertices are red and therefore, it follows from Proposition \ref{prop:uniquered} that $R \simeq \v Q$, proving the second point. Conversely, if $[\v Q]$ is in $\oo\E(Q)$, then it is a sink since all its non-frozen vertices are red.
		\end{proof}

		The following statement rephrases Corollary \ref{corol:uniquesource}:
		\begin{prop}\label{prop:bijmaxgreenpaths}
			Let $Q$ be a cluster quiver. Then $\green(Q) \neq \emptyset$ if and only if there is a sink in $\oo\E(Q)$. In this case, there is a natural bijection between $\green(Q)$ and the set of oriented paths in $\oo\E(Q)$ from its unique source to its unique sink. 
		\end{prop}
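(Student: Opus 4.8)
The plan is to translate the notion of (maximal) green sequence into the language of oriented walks in $\oo\E(Q)$ and then to read the statement off Proposition~\ref{prop:uniquered} and Corollary~\ref{corol:uniquesource}. The core claim is a bijection between $\green(Q)$ and the oriented walks in $\oo\E(Q)$ starting at the source $[\h Q]$ and ending at $[\v Q]$. In one direction, a green sequence $\i=(i_1,\dots,i_l)$ yields the ice quivers $R_0=\h Q$, $R_k=\mu_{i_k}(R_{k-1})$; since $i_k$ is green in $R_{k-1}$, each $[R_{k-1}]\to[R_k]$ is an arrow of $\oo\E(Q)$, and, $[\h Q]$ being the source by Corollary~\ref{corol:uniquesource}(1), this is a walk out of the source. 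If $\i$ is maximal, then every non-frozen vertex of $R_l$ is red, so $R_l\simeq\v Q$ by Proposition~\ref{prop:uniquered}(2) and the walk ends at $[\v Q]$, which by Corollary~\ref{corol:uniquesource}(2) is the unique sink. Conversely, a walk $[\h Q]=[S_0]\to\cdots\to[S_l]$ is lifted recursively: the arrow $[S_{k-1}]\to[S_k]$ supplies a green vertex $i_k$ of $S_{k-1}$ with $\mu_{i_k}(S_{k-1})\simeq S_k$, and one redefines $S_k:=\mu_{i_k}(S_{k-1})$; this produces a green sequence, maximal exactly when the walk ends at $[\v Q]$.

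The only delicate point in checking that these two constructions are mutually inverse is that the lift is \emph{canonical}: a given arrow of $\oo\E(Q)$ is produced by a \emph{unique} green vertex. Here I would invoke the basic theory of $\c$-vectors. For $R\in\Mut(\h Q)$ the $\c$-vectors $\c_1(R),\dots,\c_n(R)$ form a $\Z$-basis of $\Z^n$ (tropical duality, \cite{NZ:tropicalduality}), hence are pairwise distinct; with the mutation rule for $\c$-matrices this forces $\mu_i(R)\not\simeq\mu_j(R)$ as ice quivers whenever $i\neq j$, and it also shows that an isomorphism between members of $\Mut(\h Q)$, when it exists, is unique. Consequently the $\c$-matrix $\c(S_k)$ is a well-defined invariant of the walk, and it determines $i_k$. (If ``oriented path'' is understood in the strict sense of a walk with no repeated vertex, one adds that $\oo\E(Q)$ is acyclic, so that every walk out of the source is automatically a path.)

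Granting the bijection, one half of the first assertion is immediate: $\green(Q)\neq\emptyset$ forces $[\v Q]\in\oo\E(Q)$, which by Corollary~\ref{corol:uniquesource}(2) is equivalent to the existence of a sink. For the converse I must show that if $[\v Q]\in\oo\E(Q)$ then there actually exists an oriented walk from $[\h Q]$ to $[\v Q]$, and I would prove the stronger statement that \emph{every} vertex of $\oo\E(Q)$ is reachable from the source. The key observation, dual to the one above, is that a \emph{red} vertex $i$ of $R$ becomes green in $\mu_i(R)$ (mutation reverses the arrows between $i$ and the frozen vertices), so $[\mu_i(R)]\to[R]$ is an arrow of $\oo\E(Q)$; by Proposition~\ref{prop:uniquered}(1) every vertex other than $[\h Q]$ has such an incoming arrow. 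Let $U$ be the set of vertices reachable from $[\h Q]$. If $U$ were proper, connectedness of $\E(Q)$ would give an edge from $U$ to its complement; it cannot be oriented out of $U$ (its target would then lie in $U$), so it is oriented into $U$, and starting from that source vertex and following incoming arrows backwards one stays in the complement and never stops — the only source of $\oo\E(Q)$ is $[\h Q]\in U$. This contradicts the finiteness of such a backward walk, giving $U=\oo\E(Q)$ and hence a maximal green sequence.

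I expect the genuine obstacle to be exactly the finiteness (well-foundedness) of that backward walk — equivalently, the passage from ``$[\v Q]$ lies in the mutation class'' to ``$[\v Q]$ is reachable from $[\h Q]$ by green mutations''. This cannot be extracted from the definitions alone: in the cluster-finite case finiteness of $\Mut(\h Q)$ settles it at once, but in general one needs a structural input, such as a height function on $\oo\E(Q)$ or the representation-theoretic model of the oriented exchange graph recalled later in the paper, in which $[\h Q]$ corresponds to a global minimum. Everything else is bookkeeping with Proposition~\ref{prop:uniquered}, Corollary~\ref{corol:uniquesource}, and the elementary properties of $\c$-vectors.
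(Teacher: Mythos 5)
Your bijection argument is sound and is essentially all the paper itself supplies: Proposition \ref{prop:bijmaxgreenpaths} is stated there without a separate proof, as a rephrasing of Corollary \ref{corol:uniquesource}, the identification of the arrows leaving $[R]$ with the green vertices of $R$ being exactly the $n$-regularity remark made just before that corollary. Your appeal to the invertibility of $\c$-matrices is an acceptable substitute for that remark, and Proposition \ref{prop:acyclicity} disposes of the walk-versus-path point as you say. So for the ``only if'' half and for the bijection itself there is nothing to object to.

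The step you flag in the ``if'' direction is a genuine gap, but it is one you share with the paper rather than one the paper closes: no argument is given, in Section \ref{section:greenseq} or via the representation-theoretic material of Section \ref{section:reptheory}, for passing from ``$[\v Q]$ lies in $\Mut(\h Q)$'' to ``$[\v Q]$ is reachable from $[\h Q]$ by an oriented path''. Your backward-walk argument terminates only when $\oo\E(Q)$ is finite (acyclicity plus finiteness of the vertex set), i.e.\ in the finite cluster type; in general the complement of the set of reachable vertices can carry an infinite backward walk, and no height function rules this out. Indeed the implication cannot be repaired: having a sink means precisely that some sequence of arbitrary (not necessarily green) mutations turns all vertices red, and later work of Muller exhibited quivers admitting such a ``reddening'' sequence but no maximal green sequence --- existence of maximal green sequences is not mutation invariant, whereas existence of reddening sequences is. What your argument genuinely proves, and what is actually used later in the paper, is: $\green(Q) \neq \emptyset$ if and only if there is an oriented path in $\oo\E(Q)$ from its unique source to a sink, and in that case the stated bijection holds; the ``if and only if there is a sink'' phrasing is only safe under additional hypotheses such as finite cluster type, or acyclicity of $Q$, where Lemma \ref{lem:lminacyclic} provides a maximal green sequence directly. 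So your caution at that point is warranted, and the missing step is not something you could have extracted from the paper's own treatment.
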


		As it is explained in Section \ref{section:reptheory}, $\oo\E(Q)$ is isomorphic to the Hasse graph of various partially ordered sets. In particular, it has the following essential property:
		\begin{prop}\label{prop:acyclicity}
			Let $Q$ be a cluster quiver. Then $\oo\E(Q)$ has no oriented cycles. 
		\end{prop}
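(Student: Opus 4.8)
The plan is to reduce the statement to the elementary observation that the Hasse quiver of a partially ordered set carries no oriented cycle, and then to appeal to the identification of $\oo\E(Q)$ with such a Hasse quiver recalled in Section~\ref{section:reptheory}. For the elementary observation: if $P$ is a poset and $p_0 \to p_1 \to \cdots \to p_m = p_0$ is an oriented cycle in its Hasse quiver, then each arrow is a covering relation and in particular forces a strict inequality, so that $p_0 < p_1 < \cdots < p_m = p_0$, contradicting antisymmetry. Hence the Hasse quiver of a poset, and any oriented graph isomorphic to one, is acyclic.

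It therefore suffices to realize $\oo\E(Q)$ as the Hasse quiver of a poset, and I would do this exactly as in Section~\ref{section:reptheory}. Fix a non-degenerate potential $W$ on $Q$ (when $Q$ is acyclic one may take $W=0$ and work directly over $\kQ$), let $A$ be the associated Jacobian algebra, and use the bijection between $\Mut(\h Q)$, taken up to isomorphism of ice quivers, and the set of basic support $\tau$-tilting $A$-modules — equivalently, basic $2$-term silting complexes over $A$. Under this bijection the vertices $[R]$ of $\oo\E(Q)$ correspond to support $\tau$-tilting modules, and an arrow $[R] \to [\mu_k(R)]$ coming from a green vertex $k$ of $R$ corresponds precisely to a single mutation of support $\tau$-tilting modules that strictly decreases the associated functorially finite torsion class, i.e.\ to a covering relation for the partial order given by inclusion of torsion classes. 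Inclusion of torsion classes is visibly a partial order (reflexive, transitive, and antisymmetric for trivial reasons), so its Hasse quiver has no oriented cycle; transporting this statement along the bijection proves the proposition.

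The substantive input is entirely that of Section~\ref{section:reptheory}: the bijection between mutation classes of framed quivers and support $\tau$-tilting modules, and — the point that really matters here — the compatibility of the two orientations, namely that a green mutation corresponds to moving strictly downward in the torsion-class order and that such moves are exactly the covering relations. I expect this compatibility, rather than any combinatorial estimate, to be the only real content of the proof; once it is granted, acyclicity is a formality. I would also note that the argument is insensitive to the choice of model: the functorially finite torsion classes, the support $\tau$-tilting pairs, the $2$-term silting complexes, or the associated co-$t$-structures all yield the same oriented graph, and each of them is a partially ordered set for trivial reasons, so any one of them serves. A purely combinatorial alternative would start from the fact that a seed is determined by its $\c$-matrix — so that $[R] \mapsto \c(R)$ is injective up to relabeling — and then seek a relabeling-invariant monovariant that strictly changes along every green arrow; but such an argument still rests on the sign-coherence of $\c$-vectors and is no shorter, so I would keep the representation-theoretic proof as the main one.
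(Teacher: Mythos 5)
Your overall strategy coincides with the paper's: the paper gives no separate argument for this proposition beyond the remark that $\oo\E(Q)$ is isomorphic to the Hasse graph of various partially ordered sets recalled in Section~\ref{section:reptheory} (silting objects between $A$ and $A[1]$ ordered by inclusion of the left aisles of the associated $t$-structures, equivalently the pattern of hearts in $\DDfd\Gamma$ with forward tilts), and your reduction --- an oriented cycle in the Hasse quiver of a poset would yield $p_0<p_0$ --- is exactly the intended formality. In fact, for acyclicity you do not even need the arrows to be covering relations; it suffices that every green arrow strictly increases (or decreases) some partial order, which is what the aisle-inclusion picture of Section~\ref{section:reptheory} provides.

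The one concrete weakness is your primary choice of model. You fix a non-degenerate potential $W$, pass to the Jacobian algebra $A=\JJ(Q,W)$, and invoke the bijection with basic support $\tau$-tilting $A$-modules ordered by inclusion of functorially finite torsion classes. That machinery is developed for finite-dimensional algebras, i.e.\ it requires $(Q,W)$ to be Jacobi-finite, and the proposition is asserted for \emph{every} cluster quiver: for quivers such as the one in Proposition~\ref{prop:doubletriangle} or the McKay quiver of Example~\ref{exmp:VdB}, natural non-degenerate potentials are Jacobi-infinite, and the paper itself only works with the Jacobian algebra under an explicit Jacobi-finiteness hypothesis (Section~\ref{section:HU}). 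So, as literally written, your main argument does not cover the general case. The fix is the one you gesture at in your last remarks, and it is the route the paper actually takes: work with the Ginzburg dg-algebra $\Gamma$ of $(Q,W)$ and the pattern of intermediate $t$-structures with length hearts in $\DDfd\Gamma$ (equivalently, intermediate silting/co-$t$-structure data on $\per\Gamma$, or silting objects between $\kQ$ and $\kQ[1]$ when $Q$ is acyclic), ordered by inclusion of aisles; this model exists for any non-degenerate potential, green mutations are forward tilts and strictly enlarge the aisle, and your acyclicity argument then goes through verbatim. With that substitution the proof is complete and agrees with the paper's.
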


	\subsection{Existence, finiteness and lengths}
		Let $Q$ be a cluster quiver. We recall that if $\i = (i_1, \ldots, i_l)$ is a green sequence for $Q$, then the integer $l$ is called the \emph{length} of $\i$ and is denoted by $\ell(\i)$. For any $l \geq 0$, we set
		$$\green_l(Q) = \ens{\i \in \green(Q) \ | \ \ell(\i) = l},$$
		$$\green_{\leq l}(Q) = \ens{\i \in \green(Q) \ | \ \ell(\i) \leq l}$$
		and
		$$\ell_{\min}(Q) = \min \ens{l \geq 0 \ | \ \green_l(Q) \neq \emptyset} \in \Z_{\geq 0},$$
		$$\ell_{\max}(Q) = \max \ens{l \geq 0 \ | \ \green_l(Q) \neq \emptyset} \in \Z_{\geq 0} \sqcup \ens{\infty},$$
		with the conventions that $\ell_{\min}(Q) = \ell_{\max}(Q) = 0$ if $\green(Q)$ is empty.

		It is clear that if $Q$ and $Q'$ are isomorphic quivers, then the isomorphism $\phi:Q \fl Q'$ induces an isomorphism $\oo\E(Q) \fl \oo\E(Q')$ so that $\green_l(Q) = \green_l(Q')$ for any $l \geq 1$. The following proposition shows a similar result for oppositions:
		\begin{prop}\label{prop:opposition}
			Let $Q$ be a cluster quiver. Then for any $l \geq 1$, there exists a natural bijection 
			$$\green_l(Q) \leftrightarrow \green_l(Q^\op).$$
		\end{prop}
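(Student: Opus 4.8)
The plan is to realise the bijection as \emph{reversal of sequences}, $\mathbf i=(i_1,\dots,i_l)\mapsto(i_l,\dots,i_1)$, and to verify its properties through the oriented exchange graph. The ingredients, all immediate from the definitions of mutation, of $\h Q$ and $\v Q$, and of green and red vertices, are: (i) mutation of ice quivers commutes with the opposite operation, i.e.\ $\mu_k((Q')^{\op})=(\mu_k(Q'))^{\op}$ for every non-frozen vertex $k$; (ii) $(\h Q)^{\op}=\v{Q^{\op}}$ and $(\v Q)^{\op}=\h{Q^{\op}}$ as ice quivers, both having frozen set $\ens{n+1,\dots,2n}$; (iii) for any ice quiver $R$ with this frozen set, a non-frozen vertex $i$ is green in $R$ if and only if it is red in $R^{\op}$, since an arrow $j'\to i$ of $R$ is an arrow $i\to j'$ of $R^{\op}$; and (iv) if $i$ is green in $R$ then $i$ is red in $\mu_i(R)$, because the mutation rule gives $\c_i(\mu_i R)=-\c_i(R)$ while $\c_i(R)$ is sign-coherent by Theorem \ref{theorem:greenorred}.

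I would first dispose of the degenerate case. If $\green(Q)=\emptyset$, then the same reasoning applied to $Q^{\op}$ (using $(Q^{\op})^{\op}=Q$) forces $\green(Q^{\op})=\emptyset$ as well, so both sides of the claimed bijection are empty and there is nothing to prove. Assume therefore $\green(Q)\neq\emptyset$; by Proposition \ref{prop:bijmaxgreenpaths} and Corollary \ref{corol:uniquesource} this means $\v Q\in\Mut(\h Q)$. Applying $(-)^{\op}$ and using (i) and (ii), $\h{Q^{\op}}=(\v Q)^{\op}\in\Mut((\h Q)^{\op})=\Mut(\v{Q^{\op}})$, hence $\Mut(\h{Q^{\op}})=\Mut(\v{Q^{\op}})$; in particular $\green(Q^{\op})\neq\emptyset$, and, again by (i), the assignment $R\mapsto R^{\op}$ restricts to a bijection $\Mut(\h Q)\to\Mut(\h{Q^{\op}})$ (its inverse being the same assignment for $Q^{\op}$), which descends to a bijection $[R]\mapsto[R^{\op}]$ between the vertex sets of $\oo\E(Q)$ and $\oo\E(Q^{\op})$. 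Now, by (iii) and (iv), an arrow $[R]\to[R']$ of $\oo\E(Q)$ — witnessed by a green vertex $k$ of $R$ with $\mu_k(R)\simeq R'$ — is sent to the arrow $[(R')^{\op}]\to[R^{\op}]$ of $\oo\E(Q^{\op})$: indeed $k$ is red in $R'$, hence green in $(R')^{\op}$, and $\mu_k((R')^{\op})=(\mu_k R')^{\op}\simeq R^{\op}$. So $[R]\mapsto[R^{\op}]$ is an isomorphism from $\oo\E(Q)$ onto the oriented graph obtained from $\oo\E(Q^{\op})$ by reversing every arrow, carrying the source $[\h Q]$ to the sink $[\v{Q^{\op}}]$ and the sink $[\v Q]$ to the source $[\h{Q^{\op}}]$.

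Finally I would transport this back through Proposition \ref{prop:bijmaxgreenpaths}. Since reversing all arrows turns an oriented path from $[\h Q]$ to $[\v Q]$ of length $l$ into an oriented path from $[\h{Q^{\op}}]$ to $[\v{Q^{\op}}]$ of length $l$ (the direction of traversal being reversed), and since the analogous construction for $Q^{\op}$ undoes it, we obtain a length-preserving bijection between the two sets of source-to-sink paths, that is, between $\green(Q)$ and $\green(Q^{\op})$. Reading off the successive green vertices along a path, this bijection is precisely $(i_1,\dots,i_l)\mapsto(i_l,\dots,i_1)$, whence $\green_l(Q)\leftrightarrow\green_l(Q^{\op})$ for every $l\geq 1$. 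The only point requiring care is the second paragraph — ensuring that $R\mapsto R^{\op}$ lands in the correct mutation class — which is exactly why the empty case is treated separately; the remaining verifications are routine unwindings of the definitions.
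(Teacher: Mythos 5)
Your graph-theoretic core is sound, and it is essentially the paper's proof transported to the oriented exchange graph: $R\mapsto R^{\op}$ does induce an isomorphism from $\oo\E(Q)$ onto $\oo\E(Q^{\op})$ with all arrows reversed, exchanging $[\h Q]\leftrightarrow[\v{Q^{\op}}]$ and $[\v Q]\leftrightarrow[\h{Q^{\op}}]$, and composing the resulting length-preserving correspondence of source-to-sink paths with Proposition \ref{prop:bijmaxgreenpaths} (applied to $Q$ and to $Q^{\op}$) does yield a bijection $\green_l(Q)\leftrightarrow\green_l(Q^{\op})$. The genuine error is your closing identification of this bijection with plain reversal $(i_1,\dots,i_l)\mapsto(i_l,\dots,i_1)$. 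The vertices of $\oo\E(Q)$ are \emph{isomorphism classes}, and a maximal green sequence terminates at a quiver that is only isomorphic to $\v Q$, via a uniquely determined but generally nontrivial permutation $\pi\in\mathfrak S_{Q_0}$ with $\mu_{(i_1,\dots,i_l)}(\h Q)=\pi\cdot\v Q$. When you traverse the reversed path starting from the honestly labelled quiver $\h{Q^{\op}}=(\v Q)^{\op}$, the green vertices you read off are $\pi^{-1}(i_l),\dots,\pi^{-1}(i_1)$, not $i_l,\dots,i_1$: the step ``reading off the successive green vertices along a path'' silently identifies the labels of the opposites of the quivers $\mu_{(i_1,\dots,i_k)}(\h Q)$ with those of the corresponding quivers reached from $\h{Q^{\op}}$, and these differ precisely by $\pi$.

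A concrete counterexample: for $Q:1\fl 2$ the sequence $(2,1,2)$ is a maximal green sequence, and its terminal quiver is $\pi\cdot\v Q$ with $\pi$ the transposition of $1$ and $2$; but its reversal, again $(2,1,2)$, is not even a green sequence for $Q^{\op}$, since mutating $\h{Q^{\op}}$ at $2$ and then at $1$ already turns every non-frozen vertex red, so the third mutation would be at a red vertex. The correct partner is $(1,2,1)=(\pi^{-1}(2),\pi^{-1}(1),\pi^{-1}(2))$. So the proposition itself does follow from your path-reversal argument, but the explicit formula you announce (and organise the whole proof around) must be corrected to $(i_1,\dots,i_l)\mapsto(\pi^{-1}(i_l),\dots,\pi^{-1}(i_1))$, which is exactly the map the paper constructs directly at the level of sequences, where the permutation $\pi$ is introduced explicitly rather than hidden in the passage to isomorphism classes.
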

		\begin{proof}
			Let $\i = (i_1, \ldots, i_l)$ be a maximal green sequence. Then there exists $\pi \in \mathfrak S_{Q_0}$ such that $\mu_{\i}(\h Q) = \pi \cdot \v Q$. Moreover, since $\pi$ fixes the frozen vertices and since the only arrows between frozen and non-frozen vertices in $\v Q$ are the $i' \fl i$ for $i \in Q_0$, the permutation $\pi$ is uniquely determined. Therefore we have $\mu_{\pi^{-1}(i_1)} \circ \cdots \circ \mu_{\pi^{-1}(i_l)}(\v Q) = \h Q$ where $\pi^{-1}(i_l)$ is red in $\v Q$ and for any $2 \leq k \leq l$, the vertex $\pi^{-1}(i_k)$ is red in $\mu_{\pi^{-1}(i_{k+1})} \circ \cdots \circ \mu_{\pi^{-1}(i_l)}(\v Q)$. Since the mutations commute with taking opposite quivers, $\pi^{-1}(i_l)$ is green in $(\v Q)^{\op}$, the vertex $\pi^{-1}(i_k)$ is green in $\mu_{\pi^{-1}(i_{k+1})} \circ \cdots \circ \mu_{\pi^{-1}(i_l)}((\v Q)^{\op})$ for any $2 \leq k \leq l$ and $\mu_{\pi^{-1}(i_{1})} \circ \cdots \circ \mu_{\pi^{-1}(i_l)}((\v Q)^{\op})$ has only red vertices. Since $(\v Q)^{\op} = \h{Q^\op}$, it follows that $(\pi^{-1}(i_{l}), \ldots, \pi^{-1}(i_1))$ is a maximal green sequence for $Q^{\op}$. We therefore get a map $\green_l(Q) \fl \green_l(Q^\op)$ and applying the same argument to $Q^{\op}$, we get its inverse. Therefore, it is a bijection.
		\end{proof}

		\begin{lem}\label{lem:gRgR'}
			Let $Q$ be a cluster quiver and let $R,R' \in \Mut(\h Q)$ such that $[R] \fl [R']$ in $\oo\E(Q)$. Then $g([R']) \geq g([R])-1$.
		\end{lem}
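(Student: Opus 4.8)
The plan is to reduce the statement to a short sign computation on $\c$-matrices via the matrix-mutation formula. By definition of $\oo\E(Q)$, an arrow $[R] \fl [R']$ means there is a green vertex $k \in R_0$ with $R' \simeq \mu_k(R)$; since $g$ depends only on the isomorphism class it suffices to show $g(\mu_k(R)) \geq g(R) - 1$, so I shall work with $\mu_k(R)$ directly. Write $B = (b_{ij}) \in M_{n,2n}(\Z)$ for the adjacency matrix of the ice quiver $R$, and recall that $\mu_k$ on ice quivers induces the matrix mutation $B \mapsto \mu_k(B)$ of Definition \ref{defi:matrixmutation} (the remark following the definition of quiver mutation). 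The $i$th $\c$-vector is $\c_i(R) = (b_{i,n+1}, \ldots, b_{i,2n})$, and by Theorem \ref{theorem:greenorred} it is nonzero and either entrywise $\geq 0$ (when $i$ is green) or entrywise $\leq 0$ (when $i$ is red). Since $k$ is green, $\c_k(R) \geq 0$ entrywise, so $[b_{k,n+j}]_- = 0$ and $[b_{k,n+j}]_+ = b_{k,n+j}$ for all $j$.

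Next I would read off the $\c$-rows of $\mu_k(R)$ from the mutation formula. The row indexed by $k$ becomes $\c_k(\mu_k(R)) = -\c_k(R) \leq 0$ entrywise (and nonzero, since $\c_k(R) \neq 0$), so $k$ is red in $\mu_k(R)$. For any non-frozen vertex $i \neq k$ and any frozen column index $n+j$, the mutation formula gives
$$b'_{i,n+j} = b_{i,n+j} + [b_{ik}]_+[b_{k,n+j}]_+ - [b_{ik}]_-[b_{k,n+j}]_- = b_{i,n+j} + [b_{ik}]_+\, b_{k,n+j},$$
so that $\c_i(\mu_k(R)) = \c_i(R) + [b_{ik}]_+\, \c_k(R)$ entrywise; as $[b_{ik}]_+ \geq 0$ and $\c_k(R) \geq 0$, the correction term is entrywise nonnegative, hence $\c_i(\mu_k(R)) \geq \c_i(R)$ entrywise.

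The conclusion is then immediate: if $i \neq k$ is green in $R$ then $\c_i(R) \geq 0$, so $\c_i(\mu_k(R)) \geq 0$ and $i$ is green in $\mu_k(R)$. Thus every green vertex of $R$ distinct from $k$ remains green in $\mu_k(R)$, whence $g(\mu_k(R)) \geq g(R) - 1$, which is the claim (equality holds exactly when no red vertex of $R$ turns green). I do not expect any serious obstacle: the only point requiring care is the bookkeeping of the frozen-column block of the $n \times 2n$ adjacency matrix as the $\c$-matrix, together with the fact that quiver mutation agrees with matrix mutation on it — but both are already established in Section \ref{section:greenseq}, so the argument is essentially the one-line sign estimate above.
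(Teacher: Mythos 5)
Your proposal is correct and follows essentially the same route as the paper: reduce to $R' = \mu_k(R)$ for a green vertex $k$, then apply the matrix mutation formula to the frozen columns and use $b_{kf}\geq 0$ (greenness of $k$) and $b_{if}\geq 0$ (greenness of $i$) to conclude that the correction term $[b_{ik}]_+[b_{kf}]_+$ is nonnegative, so every green vertex $i\neq k$ stays green. The extra observations (that $k$ itself turns red, and the appeal to sign-coherence, which is not strictly needed since greenness already gives the nonnegativity of the relevant rows by definition) are harmless additions to the same one-line sign estimate.
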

		\begin{proof}
			Assume that $R' = \mu_k(R)$ for some green vertex $k$ in $R$. In order to prove the statement, it is enough to prove that any green vertex in $R$ which is different from $k$ is also green in $R'$. We let $B=B(R)$ and $B'=B(R')$ be the corresponding adjacency matrices. Let $i$ be a green vertex in $R$ and let $f$ be a frozen vertex in $R$. Since $i$ is green in $R$, we have $b_{if} \geq 0$ and also, since $k$ is green in $R$, we have $b_{kf} \geq 0$. Therefore,
			\begin{align*}
				b'_{if} 
					& = b_{if} + [b_{ik}]_+[b_{kf}]_+ - [b_{ik}]_-[b_{kf}]_- \\
					& = b_{if} + [b_{ik}]_+[b_{kf}]_+ \\
					& \geq b_{if} \geq 0
			\end{align*}
			so that $i$ is green in $R'$.
		\end{proof}

		\begin{rmq}
			Note that under the hypothesis of Lemma \ref{lem:gRgR'}, it may happen that $g([R']) > g([R])-1$ since a red vertex in $R$ can turn green in $R'$, see for instance the penultimate mutation in Figure \ref{fig:exmpgreen}.
		\end{rmq}

		\begin{corol}\label{corol:lowerbound}
			Let $Q$ be a cluster quiver. If $\green(Q) \neq \emptyset$, then $\ell_{\min}(Q) \geq |Q_0|$.
		\end{corol}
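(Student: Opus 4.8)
The plan is to track the number $g$ of green vertices along a maximal green sequence, using the one crucial fact already at our disposal: by Lemma \ref{lem:gRgR'}, a single mutation along an arrow of $\oo\E(Q)$ drops $g$ by at most one.

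Concretely, assume $\green(Q) \neq \emptyset$ and let $\i = (i_1, \ldots, i_l)$ be a maximal green sequence realising $l = \ell_{\min}(Q)$. Setting $R_k = \mu_{i_k} \circ \cdots \circ \mu_{i_1}(\h Q)$, the definition of a maximal green sequence together with Proposition \ref{prop:uniquered} gives an oriented path
$$[\h Q] = [R_0] \fl [R_1] \fl \cdots \fl [R_l] = [\v Q]$$
in $\oo\E(Q)$ (cf. Proposition \ref{prop:bijmaxgreenpaths}). At its source every one of the $|Q_0|$ non-frozen vertices is green, since $\c(\h Q) = I_n$, so $g([R_0]) = |Q_0|$; at its sink every non-frozen vertex is red, since $\c(\v Q) = -I_n$, so $g([R_l]) = 0$.

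Now I would apply Lemma \ref{lem:gRgR'} to each arrow $[R_{k-1}] \fl [R_k]$, obtaining $g([R_k]) \geq g([R_{k-1}]) - 1$, and an immediate induction then yields $g([R_k]) \geq |Q_0| - k$ for every $0 \leq k \leq l$. Specialising to $k = l$ and inserting $g([R_l]) = 0$ forces $0 \geq |Q_0| - l$, that is $l \geq |Q_0|$. As $l = \ell_{\min}(Q)$, this is exactly the assertion.

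I do not expect any real obstacle: the content is entirely carried by Lemma \ref{lem:gRgR'} and the identification of the two endpoints of the path. The only point requiring a little care is that Lemma \ref{lem:gRgR'} is phrased for a single arrow of $\oo\E(Q)$, so one must pass through the path interpretation of a maximal green sequence (Proposition \ref{prop:bijmaxgreenpaths}) rather than arguing directly with the vertex sequence $\i$; this is routine. It is worth noting that the bound can be strict — the type $A_2$ quiver already has a maximal green sequence of length $3 > 2$ — precisely because, as observed in the remark following Lemma \ref{lem:gRgR'}, a red vertex may become green again, so $g$ need not decrease monotonically along the path.
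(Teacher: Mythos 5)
Your proposal is correct and is essentially the paper's own argument: the paper likewise notes that $g(\h Q) = |Q_0|$, $g(\mu_\i(\h Q)) = 0$, and invokes Lemma \ref{lem:gRgR'} to conclude $l \geq |Q_0|$; you have merely spelled out the induction $g([R_k]) \geq |Q_0| - k$ that the paper leaves implicit.
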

		\begin{proof}
			By definition, in a maximal green sequence $\i = (i_1, \ldots, i_l)$, we have $g(\mu_{\i}(\h Q)) = 0$ whereas $g(\h Q) = |Q_0|$. Therefore, it follows from Lemma \ref{lem:gRgR'} that $l \geq |Q_0|$.
		\end{proof}

		\begin{exmp}
			\begin{enumerate}
				\item Let $Q$ be the quiver $1 \fl 2 \fl 3$. Then $\i = (123)$ is a maximal green sequence and therefore $\ell_{\min}(Q) = 3 = |Q_0|$.
				\item Let $Q' = \mu_2(Q)$ be the cyclic quiver with three vertices. Then it is easily verified that $\ell_{\min}(Q') = 4 > 3$ so that Corollary \ref{corol:lowerbound} only provides a lower bound for $\ell_{\min}$.
			\end{enumerate}
			Note also that these examples show that $\ell_{\min}$ is not invariant under mutations. The same will appear to be true for $\ell_{\max}$.
		\end{exmp}

		We recall that for a quiver $Q$, an \emph{admissible numbering of $Q_0$ by sources} (resp. \emph{by sinks}) is an $n$-tuple $(i_1, \ldots, i_{n})$ such that $Q_0 = \ens{i_1, \ldots, i_{n}}$ and where $i_1$ is a source (resp. a sink) in $Q$ and such that for any $2 \leq k \leq {n}$, the vertex $i_k$ is a source (resp. a sink) in $\mu_{i_{k-1}} \circ \cdots \circ \mu_{i_1}(Q)$.

		\begin{lem}\label{lem:lminacyclic}
			Let $Q$ be an acyclic quiver. Then any admissible numbering of $Q_0$ by sources is a maximal green sequence. In particular, $\green(Q) \neq \emptyset$ and $\ell_{\min}(Q) = |Q_0|$.
		\end{lem}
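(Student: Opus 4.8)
The plan is to reduce everything to a single well-chosen admissible numbering and then run an induction on the mutation steps, tracking only two pieces of data: the full subquiver on the non-frozen vertices and the $\c$-matrix. Fix an admissible numbering $\i=(i_1,\ldots,i_n)$ of $Q_0$ by sources; one exists because $Q$ is finite and acyclic (pick a source, mutate, note the result is again finite acyclic, and iterate). Set $R_0=\h Q$ and $R_k=\mu_{i_k}\circ\cdots\circ\mu_{i_1}(\h Q)$. The whole proof rests on the elementary remark that \emph{mutating an ice quiver at a vertex $v$ having no incoming arrows simply reverses all the arrows incident to $v$}: in the definition of quiver mutation, step (1) is vacuous (no path $i\to v\to j$), step (2) is vacuous, and no $2$-cycle is created. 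Consequently $v$ becomes a sink, the subquiver on the non-frozen vertices loses and gains only arrows at $v$, the vector $\c_v$ is negated, and $\c_j$ is left unchanged for every non-frozen $j\neq v$ (since the arrows between $j$ and the frozen vertices never involve $v$).

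Next I would prove by induction on $0\le k\le n$ the conjunction of: (i) the full subquiver of $R_k$ on $Q_0$ equals $\mu_{i_k}\circ\cdots\circ\mu_{i_1}(Q)$; (ii) $\c_j(R_k)=e_j$ for the not-yet-mutated vertices $j\in\{i_{k+1},\ldots,i_n\}$ and $\c_j(R_k)=-e_j$ for $j\in\{i_1,\ldots,i_k\}$; (iii) if $k<n$ then $i_{k+1}$ is a source in $R_k$, hence green. The base case $k=0$ is immediate: $\c(\h Q)=I_n$, and $i_1$, being a source of $Q$, is a source of $\h Q$ because the extra arrow $i_1\to i_1'$ is outgoing. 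For the inductive step, (iii) at $k-1$ says $i_k$ is a source of $R_{k-1}$, so the remark applies to $v=i_k$: this yields (ii) at $k$ directly (negate $\c_{i_k}$, keep the rest), and it yields (i) at $k$ once one observes, using (i) at $k-1$ together with the definition of admissible numbering by sources, that $i_k$ is a source of $\mu_{i_{k-1}}\circ\cdots\circ\mu_{i_1}(Q)$, so that ``reverse the arrows at $i_k$'' is precisely the mutation $\mu_{i_k}$ on the non-frozen part. Finally (iii) at $k$ follows because $i_{k+1}$ is a source of $\mu_{i_k}\circ\cdots\circ\mu_{i_1}(Q)$ by admissibility, while (ii) shows its only arrow to a frozen vertex is the outgoing $i_{k+1}\to i_{k+1}'$; hence $i_{k+1}$ is a source of $R_k$.

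With the induction in hand the conclusion is immediate: (iii) shows that each $i_k$ is green in $R_{k-1}$, so $\i$ is a green sequence, and (ii) at $k=n$ shows that every non-frozen vertex of $\mu_{\i}(\h Q)=R_n$ has $\c$-vector $-e_j\le 0$ and is therefore red, so $\i$ is maximal. Hence $\i\in\green(Q)$, in particular $\green(Q)\neq\emptyset$ and $\ell_{\min}(Q)\le\ell(\i)=n=|Q_0|$; together with Corollary \ref{corol:lowerbound} this gives $\ell_{\min}(Q)=|Q_0|$.

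The only mildly delicate point is keeping the two layers of the induction synchronised: at step $k$ the hypothesis ``$i_k$ is a source of $R_{k-1}$'' needed to apply the mutation-at-a-source remark must be fed both by the admissibility of the numbering (which controls the arrows among non-frozen vertices, via (i)) and by the equality $\c_{i_k}(R_{k-1})=e_{i_k}$ coming from (ii) (which guarantees there are no frozen arrows into $i_k$). Once this is set up correctly, everything else is bookkeeping; no computation with the general mutation formula for $\c$-vectors is required, since mutation at a source never touches the other rows of the $\c$-matrix.
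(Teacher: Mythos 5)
Your proof is correct and follows essentially the same route as the paper's: an induction along the admissible numbering showing that the $\c$-vectors of the not-yet-mutated vertices remain standard basis vectors (so those vertices stay green, indeed sources) while each mutation negates the $\c$-vector of the vertex mutated, so that the final quiver has only red vertices. The only cosmetic difference is that you justify the inductive step by observing that mutation at a source merely reverses the incident arrows, whereas the paper computes directly with the matrix mutation formula (using $[b_{ik}]_+=0$ and $[b_{kf}]_-=0$); the invariants tracked and the conclusion are identical.
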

		\begin{proof}
			Since $Q$ is acyclic, it is well-known that there is at least one admissible numbering of $Q_0$ by sources. Let $\i = (i_1, \ldots, i_{n})$ be such a numbering. Without loss of generality, we assume that this admissible numbering is $(1, \ldots, n)$. For any $1 \leq k \leq n$, we let $B^{(k)}$ be the adjacency matrix of $R^{(k)} = \mu_{k} \circ \cdots \circ \mu_1(\h Q)$ and $Q^{(k)} = \mu_{k} \circ \cdots \circ \mu_1(Q)$. We prove by induction on $k$ that the green vertices in $R^{(k)}$ are precisely $\ens{k+1, \ldots, n}$.

			Let $i \neq k$ be non-frozen vertices and $f$ be a frozen vertex. We have 
			$$b^{(k)}_{i,f} = b^{(k-1)}_{i,f} + [b^{(k-1)}_{i,k}]_+[b^{(k-1)}_{k,f}]_+ - [b^{(k-1)}_{i,k}]_-[b^{(k-1)}_{k,f}]_-.$$
			Since $k$ is a source in $Q^{(k-1)}$, it follows that $b^{(k-1)}_{i,k} \leq 0$. Also, by induction hypothesis $k$ is green in $R^{(k-1)}$ so that $b^{(k-1)}_{k,f} \geq 0$. Therefore, $b^{(k)}_{i,f} = b^{(k-1)}_{i,f}$ so that a non-frozen vertex $i \neq k$ is green (or red, respectively) in $R^{(k)}$ if and only if it is green (or red, respectively) in $R^{(k-1)}$. Moreover, $b^{(k)}_{k,k+n} = -b^{(k-1)}_{k,k+n} = -b_{k,k+n} = -1$ so that $k$ is red in $R^{(k)}$ whereas it was green in $R^{(k-1)}$. Thus, the green vertices in $R^{(k)}$ are exactly $\ens{k+1, \ldots, n}$. In particular, $(1, \ldots, n)$ is a maximal green sequence for $Q$.
		\end{proof}

		In general, it is not true that $\green(Q) \neq \emptyset$ for an arbitrary quiver $Q$. For instance, we have the following proposition, which will be proved in Section \ref{section:proofsgreenseq}:
		\begin{prop}\label{prop:doubletriangle}
			The quiver 
			$$\xymatrix@=1em{
					& 2 \ar@<+2pt>[rd] \ar@<-2pt>[rd] \\
				Q: 1 \ar@<+2pt>[ru] \ar@<-2pt>[ru] && 3 \ar@<+2pt>[ll] \ar@<-2pt>[ll]
			}$$
			has no maximal green sequences.
		\end{prop}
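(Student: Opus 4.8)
The plan is to prove that the coframed quiver $\v Q$ does not belong to $\Mut(\h Q)$; this suffices, because a maximal green sequence $\i$ for $Q$ would end with every non-frozen vertex of $\mu_{\i}(\h Q)$ red, hence $\mu_{\i}(\h Q)\simeq\v Q$ by Proposition \ref{prop:uniquered}(2), so that $[\v Q]\in\Mut(\h Q)$; contrapositively, $\v Q\notin\Mut(\h Q)$ forces $\green(Q)=\emptyset$ (equivalently, this is Proposition \ref{prop:bijmaxgreenpaths} combined with Corollary \ref{corol:uniquesource}). The heart of the argument is a mutation invariant of $\Mut(\h Q)$ that separates $\h Q$ from $\v Q$.

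First I would pin down the mutation class on the non-frozen part. A direct computation — using the rotational symmetry of $Q$ to reduce to a single vertex — shows $\mu_k(Q)=Q^{\op}$ for every $k\in Q_0$; since $\mu_k$ commutes with $Q\mapsto Q^{\op}$, the $\mu_k$-closure of $\{Q\}$ is $\{Q,Q^{\op}\}$, so the principal part $B^{0}(R)$ of any $R\in\Mut(\h Q)$ equals $B(Q)$ or $-B(Q)$, up to relabelling of the non-frozen vertices. Consequently, for every $R\in\Mut(\h Q)$ and every non-frozen vertex $k$: one has $b_{kk}=0$, and the $k$-th column of $B^{0}(R)$ has its positive entries summing to $2$ and its negative entries summing to $-2$ (this just records that $k$ has in-degree and out-degree $2$ inside the non-frozen subquiver).

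Now let $\sigma(R)=\sum_{i\in Q_0}\c_i(R)\in\Z^{3}$ be the sum of the $\c$-vectors of $R$. I claim $\sigma$ is constant on $\Mut(\h Q)$. Indeed, writing $B=B(R)$ and applying the mutation rule, $\c_k(\mu_kR)=-\c_k(R)$, while for $i\ne k$ one has $\c_i(\mu_kR)=\c_i(R)+[b_{ik}]_+[\c_k(R)]_+-[b_{ik}]_-[\c_k(R)]_-$, the operations $[\,\cdot\,]_{\pm}$ being applied entrywise. Summing over $i$, the facts above give $\sum_{i\ne k}[b_{ik}]_+=2$ and $\sum_{i\ne k}[b_{ik}]_-=-2$, so the cross-terms contribute $2[\c_k(R)]_+-(-2)[\c_k(R)]_-=2\c_k(R)$ by the identity $[v]_++[v]_-=v$, and therefore
\[
\sigma(\mu_kR)=-\c_k(R)+\sum_{i\ne k}\c_i(R)+2\c_k(R)=\sum_i\c_i(R)=\sigma(R).
\]
Since $\c(\h Q)=I_3$, we have $\sigma(\h Q)=(1,1,1)$. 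On the other hand, any $R\in\Mut(\h Q)$ all of whose non-frozen vertices are red has $\c_i(R)\le 0$ entrywise for every $i$, hence $\sigma(R)\le 0$ entrywise, contradicting $\sigma(R)=\sigma(\h Q)=(1,1,1)$. So no such $R$ exists; in particular $\v Q\notin\Mut(\h Q)$, and $\green(Q)=\emptyset$.

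I do not expect a serious technical obstacle: once the invariant $\sigma$ is spotted, the two things to verify — the identity $\mu_k(Q)=Q^{\op}$ and the cancellation in the displayed computation — are both immediate. The point to be careful about is conceptual rather than computational: the cancellation relies on each vertex of $Q$ having in-degree and out-degree exactly $2$ (so that $\sum_{i\ne k}[b_{ik}]_+=2$), and for a general quiver $\sum_i\c_i$ is not mutation-invariant — which is exactly why this short argument does not, and should not, extend to rule out maximal green sequences for other quivers.
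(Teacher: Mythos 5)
Your proof is correct, and it follows a genuinely different route from the paper. The paper's proof is categorical: it invokes Plamondon's result that $Q$ carries a Jacobi-finite non-degenerate potential $W$ for which the cluster-tilting objects $\Gamma_{Q,W}$ and $\Sigma\Gamma_{Q,W}$ lie in different connected components of the cluster-tilting graph of $\CC_{Q,W}$, whence there is no path from $[\h Q]$ to $[\v Q]$ in $\oo\E(Q)$. You instead exhibit an elementary combinatorial invariant: because every quiver in the mutation class of the Markov quiver has each vertex of in-degree and out-degree $2$ in its non-frozen part (your computation $\mu_k(Q)=Q^{\op}$, together with $\mu_k(-B)=-\mu_k(B)$, does establish this), the sum $\sigma(R)=\sum_i \c_i(R)$ of the $\c$-vectors is preserved by every mutation at a non-frozen vertex, and $\sigma(\h Q)=(1,1,1)$ is incompatible with all vertices being red. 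Your mutation computation for the rows of the $\c$-matrix is exactly the paper's matrix mutation rule applied to the frozen columns, and the cancellation via $[v]_++[v]_-=v$ is sound; note it does not even require sign-coherence, since ``red'' means $\c_i(R)\le 0$ entrywise by definition. What each approach buys: the paper's argument is a one-line citation once the categorical machinery of Sections 6--7 is in place, and it fits the general criterion later formalised as Proposition \ref{prop:nogreeninfinite}; your argument is self-contained, avoids any choice of potential and all representation theory, and actually proves the stronger statement that no quiver in $\Mut(\h Q)$ has all non-frozen vertices red --- so $Q$ admits no ``reddening'' sequence of any kind, not merely no maximal green sequence. (Your preliminary reduction through Proposition \ref{prop:uniquered} and Proposition \ref{prop:bijmaxgreenpaths} is harmless but unnecessary: the final contradiction already applies directly to $\mu_{\i}(\h Q)$ for any purported maximal green sequence $\i$.) As you observe, the argument is special to quivers whose mutation class has this degree rigidity, which is consistent with the paper's need for a different, categorical criterion in general.
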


		More generally, a representation-theoretic criterion for the non-existence of maximal green sequences is given in Proposition \ref{prop:nogreeninfinite}. This in particular enables us to show that the McKay quiver 
		$$\xymatrix@=1em{
						&&& 0 \ar[lldd] \ar@<+2pt>[rdddd] \ar@<-2pt>[rdddd]\\ \\
			Q:	& 1 \ar[rdd] \ar@<+2pt>[rrrr]\ar@<-2pt>[rrrr] &&&& 4 \ar[lluu] \ar@<+2pt>[ddlll] \ar@<-2pt>[ddlll]\\ \\
					&& 2 \ar[rr] \ar@<+2pt>[ruuuu]\ar@<-2pt>[ruuuu] && 3 \ar[ruu] \ar@<+2pt>[llluu]\ar@<-2pt>[llluu]
		}
		$$
		considered in \cite{VVdB:nondegenerateqps} has no maximal green sequences either, see Example \ref{exmp:VdB}.

		The quiver in Proposition \ref{prop:doubletriangle} is the quiver associated with any triangulation of the once-punctured torus, see \cite{FST:surfaces}. We will see in Section \ref{ssection:sphere} another example of a surface without boundary, namely the sphere with four punctures, for which there do exist maximal green sequences.

	\subsection{A conjecture on lengths}
		Based on the various examples we computed, we conjecture the following.
		\begin{conj}\label{conj:intervals}
			Let $Q$ be a cluster quiver. Then the set
			$$\ens{l \in \Z_{\geq 0} \ | \ \green_l(Q) \neq \emptyset}$$
			is an interval in $\Z$. 
		\end{conj}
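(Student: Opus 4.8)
The conjecture asserts that the length spectrum of maximal green sequences has no gaps. My strategy would be to realize maximal green sequences as oriented paths from source to sink in $\oo\E(Q)$ (via Proposition \ref{prop:bijmaxgreenpaths}) and to try to produce, given two such paths of lengths $l < l'$, a family of paths interpolating all intermediate lengths. The natural tool here is a \emph{local move} on paths: given an oriented path $\gamma$ through a vertex $[R]$ of $\oo\E(Q)$, if $\gamma$ enters and leaves $[R]$ along two distinct edges corresponding to green vertices $j,k$ of $R$, one can sometimes replace the ``corner'' $\mu_j$-then-$\mu_k$ by a longer detour using the fact that the rank-two sub-mutation-class generated by $j,k$ is itself an oriented exchange graph of a rank-two quiver (type $A_1\times A_1$, $A_2$, $B_2$, $G_2$, or acyclic of rank two); in that rank-two exchange graph there are exactly two maximal green sequences, of lengths $2$ and $N\in\{2,3,4,6,\infty\}$, so one can locally swap a length-$2$ corner for a length-$N$ detour or vice versa. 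Each such swap changes the total length of the path by a fixed amount, which is the wrong granularity to fill \emph{every} gap directly; so the real content is to combine several swaps (or swaps in $A_2$-subquivers, which change length by exactly $1$) to hit consecutive integers.

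Concretely, the steps I would carry out are: (1) Establish the rank-two reduction precisely: for a vertex $[R]$ and two green vertices $j,k$ of $R$, the arrows of $\oo\E(Q)$ out of $[R]$ labelled $j$ and $k$ generate a subgraph isomorphic to $\oo\E(Q')$ where $Q'$ is the full subquiver of $R$ on $\{j,k\}$ — this should follow from the compatibility of mutation with passing to the ``$c$-vector picture'' on the two coordinates $j,k$ and an analogue of Proposition \ref{prop:uniquered} in rank two. (2) Observe that whenever such a subquiver $Q'$ is of type $A_2$, the local detour changes the path length by exactly $1$, so a single $A_2$-swap moves between consecutive lengths. (3) Argue that one can always find, somewhere along \emph{some} maximal green sequence, a position where an $A_2$-corner occurs, and more importantly that by first performing coarser swaps one can reach a configuration in which an $A_2$-swap is available — this is the combinatorial heart of the argument. (4) Assemble: starting from a minimal-length maximal green sequence and from a maximal (finite)-length one, show the reachable lengths form an interval by repeatedly applying $\pm 1$ moves, using the acyclicity of $\oo\E(Q)$ (Proposition \ref{prop:acyclicity}) to guarantee the process terminates and stays within $\green(Q)$.

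The main obstacle, and the reason this is stated as a conjecture rather than a theorem, is step (3): there is no a priori guarantee that $A_2$-subquivers ever appear along a maximal green sequence, nor that the coarser swaps (which change length by $2$, $4$, or $6$) can be combined to cover all residues. For instance if every local corner one ever encounters is of type $A_1\times A_1$ (length change $2$) one only reaches lengths of one fixed parity from a given starting path, and one would then need an independent argument — perhaps using the bijection with $\green(Q^{\op})$ from Proposition \ref{prop:opposition}, or a global rather than local surgery on paths, or the representation-theoretic models of $\oo\E(Q)$ promised in Section \ref{section:reptheory} (e.g.\ interpreting lengths via the Happel--Unger poset and using results there) — to bridge the parity gap. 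Handling infinite type (where $\ell_{\max}=\infty$) adds a further wrinkle: one must show that \emph{all} sufficiently large integers occur, which requires producing arbitrarily long maximal green sequences whose lengths increase by bounded steps, again via iterated local detours in a $G_2$- or infinite-rank-two corner. Because of these gaps I would not expect a complete proof in full generality here; a realistic target is to prove the interval property in the cases where the paper already controls $\green(Q)$ — cluster-finite, affine, and acyclic rank $\le 3$ — by direct analysis of the finitely many (up to symmetry) oriented exchange graphs involved.
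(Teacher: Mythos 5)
This statement is stated in the paper as a \emph{conjecture}: the authors give no proof, only computational evidence, and they explicitly remark that the analogous statement fails in the non-simply-laced (skew-symmetrisable) setting, where the rank-two types $B_2$ and $G_2$ have maximal green sequences of lengths $2$ and $4$, respectively $2$ and $6$, so the length set is not an interval. Your proposal is therefore not being measured against a proof in the paper, and, as you yourself concede in the final paragraph, it is not a proof either: it is a plan whose central step is missing. The genuine gap is your step (3). The local surgery you describe is the standard ``polygon deformation'' of green sequences, but in the skew-symmetric case the full subquiver of $R$ on two green vertices $j,k$ is either $A_1\times A_1$ (a square, both sides of length $2$, so the swap changes the length by $0$), $A_2$ (a pentagon, sides $2$ and $3$, change by $1$), or an $r$-Kronecker with $r\geq 2$, in which case the second side of the ``polygon'' does not close up at all and no finite detour exists; your invocation of $B_2$ and $G_2$ corners is off the table for quivers, and tellingly those are exactly the cases where the conjecture is false. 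So the only available local move of nonzero effect is the $A_2$ pentagon swap, and you give no argument that such a corner is ever available along a given maximal green sequence, nor that any two maximal green sequences (in particular one of minimal and one of maximal length) are connected by a chain of these local moves --- paths from source to sink in $\oo\E(Q)$ need not be related by elementary polygon flips when infinite-type corners occur, and acyclicity of $\oo\E(Q)$ does not by itself give such connectivity.

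Two further points would also need repair even granting step (3). First, your step (1) rank-two reduction needs care: the local polygon at $[R]$ is controlled by the rank-two subquiver together with the $\c$-matrix data at $[R]$, and one must check that both sides of the polygon consist of mutations at green vertices of the \emph{framed} quivers, not merely that the underlying unoriented cycle exists; this is true but is precisely the kind of sign-coherence argument you would have to write out. Second, when $\ell_{\max}(Q)=\infty$ the interval property requires maximal green sequences of every sufficiently large length, which no finite sequence of $\pm 1$ local moves starting from known sequences will produce without an additional construction of arbitrarily long maximal green sequences with consecutive lengths. Your fallback --- verifying the conjecture in the cluster-finite, affine and rank-$\leq 3$ cases ``by direct analysis of the finitely many oriented exchange graphs'' --- is also not carried out and is not routine as stated, since in the affine and wild rank-$3$ cases $\oo\E(Q)$ is infinite. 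In short: the statement remains open in the paper, and your outline identifies a plausible line of attack but does not close, and honestly flags, the step on which everything depends.
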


		Given a cluster quiver $Q$, the \emph{empirical maximal length} is 
		$$\ell^0_{\max}(Q) = \max \ens{l \geq 1 \ | \ \green_{k}(Q) \neq \emptyset \text{ for any } k \text{ s.t. } \ell_{\min}(Q) \leq k \leq l}$$
		and we let
		$$\green^0(Q) = \green_{\leq \ell^0_{\max}(Q)}(Q),$$
		with the convention that $\ell^0_{\max}(Q) = 0$ if $\green(Q) = \emptyset$. 

		With this terminology, Conjecture \ref{conj:intervals} states that $\ell^0_{\max}(Q) = \ell_{\max}(Q)$ and $\green^0(Q) = \green(Q)$. We will see several examples in which this appears. 
		
		The motivation for introducing the empirical maximal length is that it is easy to determine in practice: let $l$ be the smallest integer such that $\green_{l}(Q) \neq \emptyset$ and $\green_{l+1}(Q) = \emptyset$, then $l=\ell^0_{\max}(Q)$. Therefore, if Conjecture \ref{conj:intervals} holds, it is enough to find such an $l$ to determine $\green(Q)$. This is the strategy we used in the computations whose results are given in Appendix \ref{section:examples}.

		Note that Conjecture \ref{conj:intervals} does not hold true in the non-simply-laced case, as is shown for instance in Appendix \ref{ssection:rank2}. 

\section{Maximal green sequences and BPS quivers}\label{section:BPS}
	As we already mentioned, maximal green sequences appear independently in theoretical physics, implicitly in \cite{GMN:WKB} or more explicitly in \cite{ACCERV:BPS}. In order to make the connection clear, we present in this short section a precise dictionary between the formal definition we gave in the previous section, and the physics review given in \cite[\S 4.2]{CCV:Braids}. 

	We fix a cluster quiver $Q$. Vertices in $Q$ are called \emph{nodes} in \cite{CCV:Braids}. 

	For simplicity, we identify the set $Q_0$ of vertices with $\ens{1, \ldots, n}$. We let $\ens{\gamma_i}_{1 \leq i \leq n}$ denote the canonical basis of $\Z^n$. In the terminology of \cite{CCV:Braids}, for any $R \in \Mut(\h Q)$ and for any $1 \leq i \leq n$, the $i$th $\c$-vector $\c_i(R) \in \Z^n$ is called the \emph{charge at node $i$}. Therefore, the charges in $\h Q$ are $\gamma_1, \ldots, \gamma_n$. 

	For any quiver $R \in \Mut(\h Q)$ and for any $1 \leq k \leq n$, the charge at node $k$ in $R$ is $\c_k(R) = \sum_{i=1}^n c_{k;i}(R) \gamma_i$ where $c_{k;i}(R) \in \Z$ for any $i$. It follows from the sign-coherence for $\c$-vectors (see Theorem \ref{theorem:greenorred}) that either $c_{k;i}(R) \leq 0$ for every $i$, in which case $k$ is green in $R$, or $c_{k;i}(R) \geq 0$ for every $i$, in which case $k$ is red in $R$. Moreover, if $k$ is green in $R$, then the $\c$-vectors of $\mu_k(R)$ are precisely given in terms of those of $R$ by the rule for charges given in \cite[(4.4)]{CCV:Braids}.

	Now, the sequences of mutations considered in \cite{CCV:Braids} for capturing complete BPS spectra are those for which:
	\begin{enumerate}
		\item[(G1)] the initial quiver appears with node charges $\gamma_i$;
		\item[(G2)] the final quiver appears with node charges $-\gamma_i$;
		\item[(G3)] At each step we may mutate on any node whose charge can be expressed as $\gamma = \sum_i c_i \gamma_i$ where $c_i \geq 0$ for any $1 \leq i \leq n$.
	\end{enumerate}

	Therefore, with our terminology, (G1) implies that the initial quiver $R$ has only green vertices, so that $[R] = [\h Q]$ according to Proposition \ref{prop:uniquered}, (G2) implies that the final quiver $R'$ has only red vertices so that $[R] = [\v Q]$ according to Proposition \ref{prop:uniquered}. Finally, (G3) says that at each step in the mutation sequence $\h Q \simeq R \xrightarrow{\mu_{i_1}} R^{(1)} \xrightarrow{\mu_{i_l}} \cdots \xrightarrow{\mu_{i_1}} R^{(l)} \simeq \v Q$, we mutated at a green vertex. Therefore, the sequences considered in \cite{CCV:Braids} are precisely the maximal green sequences of $Q$.

\section{The finite cluster type}\label{section:finite}
	It was proved in \cite{cluster2} that a cluster algebra $\AA_Q$ associated with a cluster quiver $Q$ has finitely many cluster variables if and only if $Q$ is mutation-equivalent to a Dynkin quiver $\oo\Delta$. In this case, $Q$ is called of \emph{finite cluster type} and it is known that the number of cluster variables in $\AA_Q$ equals the number of \emph{almost positive roots} of the Dynkin quiver $\oo\Delta$, where the set $\Phi_{\geq -1}(\oo\Delta)$ of almost positive roots of $\oo\Delta$ is the disjoint union of the set $\Phi_+(\oo\Delta)$ of positive roots with the set of negative simple roots.

	\begin{theorem}\label{theorem:finitetype}
		Let $Q$ be a quiver of finite cluster type. Then
		$$|Q_0| \leq |\green(Q)| < \infty.$$
	\end{theorem}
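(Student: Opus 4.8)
The plan is to establish the two inequalities separately. For the lower bound $|Q_0| \leq |\green(Q)|$, I would first invoke Lemma \ref{lem:lminacyclic} to dispose of the case where $Q$ is acyclic: there at least one admissible numbering by sources exists, but in fact I want to produce $|Q_0|$ \emph{distinct} maximal green sequences. To do this I would argue that since $Q$ has finite cluster type, every quiver $R$ in $\Mut(\h Q)$ has at least one green vertex unless $[R]=[\v Q]$ (this uses Proposition \ref{prop:uniquered} together with finiteness to ensure $[\v Q]$ is actually reachable), so one can always greedily extend a green sequence until it terminates maximally; hence $\green(Q)\neq\emptyset$. Then I would exhibit $n=|Q_0|$ different maximal green sequences by distinguishing them according to their \emph{first} mutation: each of the $n$ green vertices of $\h Q$ can be chosen as $i_1$, and each such partial sequence extends to a maximal green sequence by the greedy argument, giving sequences that are pairwise distinct because they start differently. (One should check a green vertex of $\h Q$ really is available for each vertex — indeed all vertices of $\h Q$ are green.)

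For the upper bound, the key point is finiteness of $\oo\E(Q)$: when $Q$ is of finite cluster type, the cluster complex $\Delta(\AA_Q)$ is finite by \cite{cluster2}, so the exchange graph $\E(Q)$ has finitely many vertices, hence so does $\oo\E(Q)$. By Proposition \ref{prop:acyclicity}, $\oo\E(Q)$ is a finite acyclic oriented graph, and by Proposition \ref{prop:bijmaxgreenpaths} the set $\green(Q)$ is in bijection with the set of oriented paths from the unique source $[\h Q]$ to the unique sink $[\v Q]$. A finite acyclic digraph has only finitely many directed paths between any two vertices (one can see this by induction on the longest path length, or by noting that a path visits each vertex at most once so there are at most $(\#\text{vertices})!$ of them), so $|\green(Q)| < \infty$.

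The main obstacle is the lower bound: making the greedy extension argument rigorous requires knowing that from any non-terminal vertex of $\oo\E(Q)$ one can reach the sink $[\v Q]$ along green mutations, i.e. that $[\v Q]$ is reachable and that there are no "dead ends." Acyclicity and finiteness of $\oo\E(Q)$ give that every vertex has an outgoing edge unless it is the unique sink $[\v Q]$ (a finite acyclic digraph has sinks, and by Corollary \ref{corol:uniquesource} combined with finiteness the only candidate sink is $[\v Q]$, which exists since $[\v Q]\in\Mut(\h Q)$ when the mutation class is finite and $\v Q = \mu_{\i}(\h Q)$ for suitable reasons — this needs Proposition \ref{prop:uniquered} and the fact that in finite type one can always reach the all-red quiver). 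So from $[\h Q]$ one greedily walks forward, never getting stuck before $[\v Q]$, and since the graph is acyclic the walk terminates; it terminates at a sink, hence at $[\v Q]$. This produces the required maximal green sequences and in particular shows $\green(Q)\neq\emptyset$.
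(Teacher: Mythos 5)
Your proposal is correct and follows essentially the same route as the paper: finiteness of $\E(Q)$ plus acyclicity (Proposition \ref{prop:acyclicity}) and the path bijection (Proposition \ref{prop:bijmaxgreenpaths}) give the upper bound, while for the lower bound the paper likewise uses that $\oo\E(Q)$ is finite, acyclic and $|Q_0|$-regular at $[\h Q]$, so each of the $|Q_0|$ arrows leaving the unique source extends (by exactly your greedy argument, with Corollary \ref{corol:uniquesource} identifying the unique sink $[\v Q]$) to a source-to-sink path, yielding $|Q_0|$ distinct maximal green sequences. The preliminary appeal to Lemma \ref{lem:lminacyclic} in your write-up is harmless but unnecessary, since your general argument already covers all quivers of finite cluster type.
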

	\begin{proof}
		Since $Q$ is of finite cluster type, the exchange graph $\E(Q)$ is finite. Moreover, we know from Proposition \ref{prop:acyclicity} that $\oo\E(Q)$ is acyclic. Hence, it contains only finitely many oriented paths and thus it follows from Proposition \ref{prop:bijmaxgreenpaths} that $\green(Q)$ is finite.

		Now since $\oo\E(Q)$ is a finite acyclic oriented graph, it necessarily has at least one sink and one source and by Corollary \ref{corol:uniquesource}, it has a unique sink, corresponding to $[\h Q]$, and a unique source, corresponding to $[\v Q]$. The underlying graph of $\oo\E(Q)$ is $|Q_0|$-regular so that there are exactly $|Q_0|$ distinct arrows starting at $[\h Q]$. Since $\oo\E(Q)$ is finite, each of these arrows gives rise to at least one oriented path from the unique sink to the unique source and therefore we obtain at least $|Q_0|$ distinct oriented paths from the unique source to the unique sink in $\oo\E(Q)$, that is, $|Q_0| \leq |\green(Q)|$.
	\end{proof}

	\begin{rmq}
		\begin{enumerate}
			\item If $Q$ is a cluster quiver such that $|Q_0| = 1$, then clearly $|\green(Q)| = 1$.
			\item If $Q$ is a (connected) cluster quiver such that $|Q_0| = 2$, then it is shown in Lemma \ref{lem:2vertices} that $\green(Q)$ has two elements of respective lengths 2 and 3 in the finite cluster type and a unique element, necessarily of length two, in the other cases.
			\item If $Q$ is a cluster quiver of finite cluster type such that $|Q_0| > 2$, then it $|\green(Q)|>|Q_0|$ in general. In particular, one can observe that for linearly oriented quivers $Q_n$ of type $A_n$, the cardinality $|\green(Q_n)|$ seems to grow exponentially as a function of $n$, see \cite{DupontPerotin:QME}.
		\end{enumerate}
	\end{rmq}

	\begin{rmq}
		If $Q$ is of finite cluster type, then a rough analysis provides an upper bound for $\ell_{\max}(Q)$. Namely, if we set
		$$\chi(Q) = \left|\ens{[R] \ | \ R \in \Mut(\h Q)}\right|,$$
		then we have 
		$$\ell_{\max}(Q) \leq |Q_0| \cdot \left(|Q_0|-1\right)^{\chi(Q)-2}.$$
		Indeed, an oriented path on $\oo\E(Q)$ starts at $[\h Q]$ where we have $|Q_0|$ choices of directions and then, it passes at most once through any vertex in $\oo\E(Q)$ distinct from $[\h Q]$ and $[\v Q]$. There are $\chi(Q)-2$ such vertices and at each such vertex $[R]$, there are at most $|Q_0|-1$ possible directions (since in order to leave $[R]$, we cannot use backwards the arrow we just used in order to arrive at $[R]$).
	\end{rmq}

	In general, these upper and lower bounds are not optimal but in the acyclic case, we can sharpen the result with the following theorem whose proof is given in Section \ref{section:proofsfinite}:
	\begin{theorem}\label{theorem:Dynkinminmax}
		Let $Q$ be a Dynkin quiver. Then:
		\begin{enumerate}
			\item $\ell_{\min}(Q) = |Q_0|$,
			\item $\ell_{\max}(Q) = |\Phi_+(Q)|$,
		\end{enumerate}
		where $\Phi_+(Q)$ is the set of \emph{positive roots} of $Q$.
	\end{theorem}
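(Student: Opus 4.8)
The plan is to interpret the oriented exchange graph $\oo\E(Q)$ representation-theoretically, via the isomorphism (recalled in Section \ref{section:reptheory}) with the Hasse graph of a poset on the set of clusters, equivalently on the set of ``functorially finite torsion classes'' or support $\tau$-tilting modules over the path algebra $\k Q$. Under this dictionary, a maximal green sequence corresponds to a maximal chain from the maximal element (all of $\modd \k Q$, i.e.\ $[\h Q]$) down to the minimal element ($0$, i.e.\ $[\v Q]$), and mutating at a green vertex corresponds to removing a single indecomposable from the current tilting object / refining the torsion class. Part (1) is already known: by Lemma \ref{lem:lminacyclic}, since a Dynkin quiver is acyclic we have $\ell_{\min}(Q) = |Q_0|$, and that lemma also guarantees $\green(Q) \neq \emptyset$, so there is nothing further to prove for (1).

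For part (2) the first step is to establish the upper bound $\ell_{\max}(Q) \leq |\Phi_+(Q)|$. Here I would track, along a maximal green sequence $\i = (i_1, \dots, i_l)$, the sequence of $\c$-matrices of $R^{(k)} = \mu_{i_k}\circ\cdots\circ\mu_{i_1}(\h Q)$. Each step $k$ replaces the $\c$-vector $\c_{i_k}(R^{(k-1)})$ (which is a positive root, by sign-coherence and the known fact that for acyclic $Q$ the $\c$-vectors are roots) by its negative, and along the way every $\c$-vector that appears is $\pm$ a root of $Q$. The key combinatorial input is that no positive root can appear as the mutating $\c$-vector twice: once a green vertex is mutated its $\c$-vector becomes negative, and I must argue — using the no-oriented-cycles property of $\oo\E(Q)$ (Proposition \ref{prop:acyclicity}) together with the tropical/linear-algebra behaviour of $\c$-vectors under green mutations — that it can never return to being that same positive root. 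Hence $l$ is at most the number of positive roots available to be "consumed," which is $|\Phi_+(Q)|$.

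The second step is the matching lower bound: exhibit one maximal green sequence of length exactly $|\Phi_+(Q)|$. The natural candidate is the green sequence associated to the Auslander–Reiten / Coxeter order: starting from an admissible numbering of $Q_0$ by sinks, repeatedly mutate at the vertex corresponding to the currently "most injective" remaining indecomposable, so that the indecomposable $\k Q$-modules are stripped off one at a time in a compatible order; since a Dynkin quiver has exactly $|\Phi_+(Q)|$ indecomposable modules and each mutation in this sequence kills exactly one of them (a red vertex does not turn green again, by the structure of the AR-quiver), the resulting green sequence has length $|\Phi_+(Q)|$. Concretely one can induct on $|Q_0|$, peeling off a source as in Lemma \ref{lem:lminacyclic} but choosing the order of the remaining mutations to realise the full stripping; the one-step-at-a-time property combined with the finiteness from Theorem \ref{theorem:finitetype} forces maximality.

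The main obstacle I expect is the upper bound, specifically the claim that a given positive root is mutated at most once: this requires knowing that green mutations, viewed on $\c$-vectors, move monotonically in a suitable sense (no positive root reappears once it has been turned negative). I would prove this by passing to the torsion-class picture, where a maximal green sequence corresponds to a strictly decreasing chain of torsion classes $\modd\k Q = \TT_0 \supsetneq \TT_1 \supsetneq \cdots \supsetneq \TT_l = 0$ and the $k$-th mutation "removes" from the torsion class the simple object of the brick corresponding to the positive root $\c_{i_k}$; strict decrease then immediately prevents repetition and bounds $l$ by the number of bricks, which for a Dynkin quiver equals $|\Phi_+(Q)|$. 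Making this bijection between mutation steps and bricks precise — i.e.\ that exactly one brick is "left behind" at each step and all bricks are roots — is the technical heart, and it is exactly the kind of statement the representation-theoretic sections of the paper are set up to supply.
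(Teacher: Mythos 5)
Your part (1) coincides with the paper's: Lemma \ref{lem:lminacyclic} already does all the work. For part (2) your overall route --- pass to the representation-theoretic picture, get the upper bound $\ell_{\max}(Q)\leq|\Phi_+(Q)|$ by a counting argument along the chain of aisles/torsion classes, and exhibit one maximal green sequence of length $|\Phi_+(Q)|$ by sweeping through the indecomposables in Auslander--Reiten order --- is the same as the paper's, but both of your key steps are left at the level of ``to be argued'', and the devices you propose are either stronger than needed or not justified. For the upper bound you do not need the (true but harder) claim that no positive root occurs twice as a mutated $\c$-vector, nor any brick labelling: writing $H=\kQ$, along any green sequence the left aisles $\DD^{\leq 0}_{T^{(k)}}$ (equivalently the corresponding torsion classes) are strictly nested, so each step adds at least one indecomposable object, and the aisles of $H$ and $H[1]$ differ exactly by the indecomposable $H$-modules, of which there are $|\Phi_+(Q)|$ by Gabriel's theorem; hence any path from $H$ to $H[1]$ in $\oo\E(Q)$ has length at most $|\Phi_+(Q)|$. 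Your torsion-class fallback is essentially this count, so the ``technical heart'' you worry about is not actually required.

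For the lower bound there is a genuine gap as written: neither ``mutate at the currently most injective remaining indecomposable'' nor the suggested induction on $|Q_0|$ peeling off a source as in Lemma \ref{lem:lminacyclic} is verified to be realizable by mutations at green vertices, to add exactly one indecomposable per step, or to terminate at $H[1]$; moreover the parenthetical justification ``a red vertex does not turn green again, by the structure of the AR-quiver'' is both unnecessary for the length count and false in general (red vertices can turn green again, cf.\ the remark after Lemma \ref{lem:gRgR'}). The paper's device closing this gap is concrete: choose a reduced expression $w_0=s_{i_1}\cdots s_{i_r}$, $r=|\Phi_+(Q)|$, of the longest element of the Weyl group which is an admissible sequence of sinks for $Q$; then each $T^{(k)}=\mu^+_{i_k}\circ\cdots\circ\mu^+_{i_1}(H)$ is obtained from $T^{(k-1)}$ by an APR tilt \cite{APR}, so consecutive aisles differ by a single indecomposable, and $T^{(r)}\simeq H[1]$. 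This produces a maximal green sequence of length exactly $|\Phi_+(Q)|$, which together with the counting bound gives $\ell_{\max}(Q)=|\Phi_+(Q)|$. To complete your version you should either adopt this $w_0$/APR-tilt construction or prove directly that the slice-by-slice sweep of the AR quiver is a green sequence adding one module per step.
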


	\begin{exmp}
		We show below the oriented exchange graphs for some quivers in the mutation class of type $A_3$. The labels on the faces correspond to denominators of the cluster variables in the corresponding clusters expressed in the seed with exchange matrix $B(Q)$. The unique source is circled in green and the unique sink is circled in red.
		\begin{figure}[H]
			\begin{center}
				\begin{tikzpicture}[scale = .75]
					\begin{scope}[decoration={
								markings,
								mark=at position 0.6 with {\arrow{triangle 60}}}
								] 
						\foreach \y in {0}
						{

							\fill (3,\y+1) circle (.1);
							\fill (3,\y-1) circle (.1);

							\fill (7,\y+1) circle (.1);
							\fill (7,\y-1) circle (.1);

							\fill (11,\y+1) circle (.1);
							\fill (11,\y-1) circle (.1);

							\fill (7,\y-3) circle (.1);
							\fill (7,\y+3) circle (.1);

							\foreach \x in {2,4,6,8,10,12}
							{
								\fill (\x,\y) circle (.1);
							}

							\draw[thick,green] (3,\y-1) circle (.2);
							\draw[thick,red] (11,\y+1) circle (.2);
				
							\draw[postaction={decorate}] (2,\y) -- (3,\y+1);
							\draw[postaction={decorate}] (4,\y) -- (3,\y+1);

							\draw[postaction={decorate}] (3,\y-1) -- (2,\y);
							\draw[postaction={decorate}] (3,\y-1) -- (4,\y);

							\draw[postaction={decorate}] (4,\y) -- (6,\y);

							\draw[postaction={decorate}] (6,\y) -- (7,\y+1);
							\draw[postaction={decorate}] (6,\y) -- (7,\y-1);
							\draw[postaction={decorate}] (7,\y+1) -- (8,\y);
							\draw[postaction={decorate}] (7,\y-1) -- (8,\y);

							\draw[postaction={decorate}] (8,\y) -- (10,\y);

							\draw[postaction={decorate}] (10,\y) -- (11,\y+1);
							\draw[postaction={decorate}] (11,\y-1) -- (10,\y);
							\draw[postaction={decorate}] (12,\y) -- (11,\y+1);
							\draw[postaction={decorate}] (11,\y-1) -- (12,\y);

							\draw[postaction={decorate}] (2,\y) -- (1,\y);
							\draw[postaction={decorate}] (13,\y) -- (12,\y);

							\draw[postaction={decorate}] (3,\y+1) -- (7,\y+3);
							\draw[postaction={decorate}] (7,\y+1) -- (7,\y+3);
							\draw[postaction={decorate}] (7,\y+3) -- (11,\y+1);

							\draw[postaction={decorate}] (3,\y-1) -- (7,\y-3);
							\draw[postaction={decorate}] (7,\y-3) -- (7,\y-1);
							\draw[postaction={decorate}] (7,\y-3) -- (11,\y-1);

							\fill (3,\y) node {$-\alpha_2$};
							\fill (4,\y+2.5) node {$\alpha_1$};
							\fill (4,\y-2.5) node {$-\alpha_3$};

							\fill (5,\y+1) node {$\alpha_3$};
							\fill (5,\y-1) node {$-\alpha_1$};

							\fill (7,\y) node {$\alpha_2 + \alpha_3$};

							\fill (9,\y+1) node {$\alpha_1+\alpha_2+\alpha_3$};
							\fill (9,\y-1) node {$\alpha_2$};

							\fill (11,\y) node {$\alpha_1+\alpha_2$};

						}
					\end{scope}
				\end{tikzpicture}
				\caption{The oriented exchange graph of $1 \fl 2 \fl 3$.}
			\end{center}
		\end{figure}

		\begin{figure}[H]
			\begin{center}
				\begin{tikzpicture}[scale = .75]
					\begin{scope}[decoration={
								markings,
								mark=at position 0.6 with {\arrow{triangle 60}}}
								] 
						\foreach \y in {0}
						{

							\fill (3,\y+1) circle (.1);
							\fill (3,\y-1) circle (.1);

							\fill (7,\y+1) circle (.1);
							\fill (7,\y-1) circle (.1);

							\fill (11,\y+1) circle (.1);
							\fill (11,\y-1) circle (.1);

							\fill (7,\y-3) circle (.1);
							\fill (7,\y+3) circle (.1);

							\foreach \x in {2,4,6,8,10,12}
							{
								\fill (\x,\y) circle (.1);
							}

							\draw[thick,green] (7,\y-3) circle (.2);
							\draw[thick,red] (7,\y+3) circle (.2);
				
							\draw[postaction={decorate}] (2,\y) -- (3,\y+1);
							\draw[postaction={decorate}] (4,\y) -- (3,\y+1);

							\draw[postaction={decorate}] (3,\y-1) -- (2,\y);
							\draw[postaction={decorate}] (3,\y-1) -- (4,\y);

							\draw[postaction={decorate}] (6,\y) -- (4,\y);

							\draw[postaction={decorate}] (6,\y) -- (7,\y+1);
							\draw[postaction={decorate}] (7,\y-1) -- (6,\y);
							\draw[postaction={decorate}] (8,\y) -- (7,\y+1);
							\draw[postaction={decorate}] (7,\y-1) -- (8,\y);

							\draw[postaction={decorate}] (10,\y) -- (8,\y);

							\draw[postaction={decorate}] (10,\y) -- (11,\y+1);
							\draw[postaction={decorate}] (11,\y-1) -- (10,\y);
							\draw[postaction={decorate}] (12,\y) -- (11,\y+1);
							\draw[postaction={decorate}] (11,\y-1) -- (12,\y);

							\draw[postaction={decorate}] (2,\y) -- (1,\y);
							\draw[postaction={decorate}] (13,\y) -- (12,\y);

							\draw[postaction={decorate}] (3,\y+1) -- (7,\y+3);
							\draw[postaction={decorate}] (7,\y+1) -- (7,\y+3);
							\draw[postaction={decorate}] (11,\y+1) -- (7,\y+3);

							\draw[postaction={decorate}] (7,\y-3) -- (3,\y-1);
							\draw[postaction={decorate}] (7,\y-3) -- (7,\y-1);
							\draw[postaction={decorate}] (7,\y-3) -- (11,\y-1);

							\fill (3,\y) node {$\alpha_1$};
							\fill (4,\y-2.5) node {$-\alpha_2$};
							\fill (4,\y+2.5) node {$\alpha_1+\alpha_3$};

							\fill (5,\y+1) node {$\alpha_1+\alpha_2$};
							\fill (5,\y-1) node {$-\alpha_3$};

							\fill (7,\y) node {$\alpha_2$};

							\fill (9,\y+1) node {$\alpha_2+\alpha_3$};
							\fill (9,\y-1) node {$-\alpha_1$};

							\fill (11,\y) node {$\alpha_3$};

						}
					\end{scope}
				\end{tikzpicture}
				\caption{The oriented exchange graph of the cyclic quiver with 3 vertices.}
			\end{center}
		\end{figure}
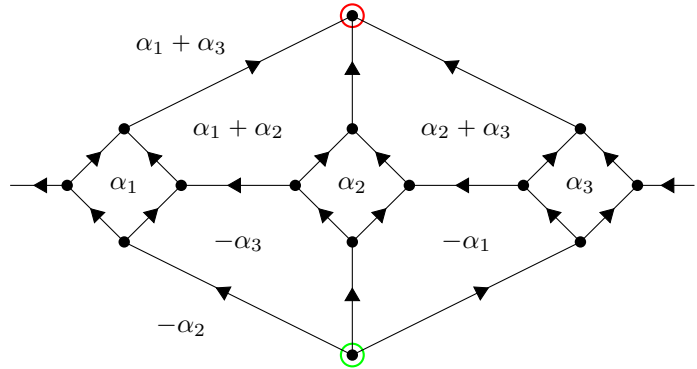
	\end{exmp}

	We refer to Appendix \ref{section:examples} for additional examples.

\section{The infinite cluster type}\label{section:infinite}
	If $Q$ is not of finite cluster type, then $\oo\E(Q)$ is an infinite oriented graph and it is not known whether $\green(Q)$ is a finite set or not. Moreover, we have already seen in Proposition \ref{prop:doubletriangle} that $\green(Q)$ can be empty in the general case. When $Q$ is acyclic, we know from Corollary \ref{corol:lowerbound} that $\green(Q)$ is non-empty so that we will now focus on this case. 

	It is proved in \cite{cluster2} that an acyclic quiver is of finite cluster type if and only if it is an orientation of a Dynkin diagram or, in representation-theoretic terms, if it is of finite representation type. Representation-infinite quivers are partitioned into two sets: \emph{affine quivers}, which are acyclic orientations of extended Dynkin diagrams of types $\widetilde A$, $\widetilde D$ or $\widetilde E$, and \emph{wild quivers}, which are the acyclic quivers which are neither Dynkin nor affine.

	The following lemma will be proved in Section \ref{section:proofsinfinite}.
	\begin{lem}\label{lem:2vertices}
		Let $Q$ be a (connected) cluster quiver with two vertices. Then:
		\begin{enumerate}
			\item either $Q$ is of type $A_2$ and $\green(Q) = 2$, $\ell_{\min}(Q)= 2$ and $\ell_{\max}(Q) = 3$,
			\item or $Q$ is representation-infinite and $\green(Q)=1$ and $\ell_{\min}(Q) = \ell_{\max}(Q) = 2$.
		\end{enumerate}
	\end{lem}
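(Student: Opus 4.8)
\emph{Setup and reduction.} The plan is to reduce to a single family and then to follow the two arrows leaving $[\h Q]$ in $\oo\E(Q)$. After relabelling the two vertices, a connected cluster quiver on two vertices has $Q_0 = \ens{1,2}$ and $r$ arrows $1 \fl 2$ for some $r \geq 1$ (there are no arrows in both directions, cluster quivers having no $2$-cycles); write $Q_r$ for this quiver, so that $Q_1$ is of type $A_2$ while $Q_r$ is representation-infinite for $r \geq 2$ (a generalised Kronecker quiver). Two elementary facts are used throughout: (i) mutating at either non-frozen vertex sends $b_{12}$ to $-b_{12}$, so every quiver of $\Mut(\h{Q_r})$ has exactly $r$ arrows between its two non-frozen vertices; (ii) mutating at a non-frozen vertex creates neither a $2$-cycle nor a new arrow between the two non-frozen vertices, so if $R \in \Mut(\h{Q_r})$ has a green vertex $g$ and a red vertex $h$, then in $\mu_g(R)$ those $r$ arrows run from the red vertex to the green one.

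\emph{The branch starting with $1$.} Both non-frozen vertices of $\h Q$ are green, so $[\h Q]$ has exactly two outgoing arrows in $\oo\E(Q)$ and, by Proposition~\ref{prop:bijmaxgreenpaths}, a maximal green sequence begins with $1$ or with $2$. A direct computation gives $\c_1(\mu_1(\h Q)) = (-1,0)$ and $\c_2(\mu_1(\h Q)) = (0,1)$, so $2$ is the unique green vertex of $\mu_1(\h Q)$, and then $\mu_2\mu_1(\h Q) = \v Q$. Hence $(1,2) \in \green(Q)$ and it has length $2$; together with Corollary~\ref{corol:lowerbound} this gives $\ell_{\min}(Q) = |Q_0| = 2$, and uniqueness of the green vertex of $\mu_1(\h Q)$ shows $(1,2)$ is the only maximal green sequence starting with $1$.

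\emph{The branch starting with $2$.} Put $P_1 = \mu_2(\h Q)$ and, for as long as a green vertex exists, let $P_{k+1}$ be the mutation of $P_k$ at it. The key claim, proved by induction on $k$, is that the rows of the $\c$-matrix of $P_k$ form the unordered pair $\ens{w_k,\,-w_{k-1}}$, where $w_0 = (0,1)$, $w_1 = (1,r)$ and $w_{k+1} = r\,w_k - w_{k-1}$: the $\c$-vector mutation rule replaces the green row by its negative and the red row $\c_h$ by $\c_h + [b_{hg}]_+\c_g$ (the other summand vanishes by sign-coherence, Theorem~\ref{theorem:greenorred}), with $[b_{hg}]_+ = r$ by fact (ii). Along the way one sees that $P_k$ has a unique green vertex as long as it has one, so this branch is a single green sequence, mutating at $2,1,2,\dots$ in turn. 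If $r = 1$ the recursion is $6$-periodic and $\ens{w_3,-w_2} = \ens{(0,-1),(-1,0)}$, so all non-frozen vertices of $P_3$ are red and $P_3 \simeq \v Q$ by Proposition~\ref{prop:uniquered}; since $P_1$ and $P_2$ each have a green vertex, $(2,1,2)$ is the unique maximal green sequence starting with $2$, of length $3$. If $r \geq 2$, an induction using $w_{k+1} - w_k = (r-1)w_k - w_{k-1} \geq (r-2)w_k \geq (0,0)$ (coordinatewise) shows both coordinates of $w_k$ are strictly positive for every $k \geq 1$; hence the $\c$-matrix of $P_k$ never has both rows non-positive, no $P_k$ is isomorphic to $\v Q$, and this branch is an infinite green sequence contributing no maximal green sequence.

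\emph{Conclusion and the main difficulty.} Combining the branches: for $r = 1$, $\green(Q) = \ens{(1,2),(2,1,2)}$, so $|\green(Q)| = 2$, $\ell_{\min}(Q) = 2$ and $\ell_{\max}(Q) = 3$; for $r \geq 2$, $\green(Q) = \ens{(1,2)}$, so $|\green(Q)| = 1$ and $\ell_{\min}(Q) = \ell_{\max}(Q) = 2$. The only genuine work is the bookkeeping of the second branch — the recursive description $\ens{w_k,-w_{k-1}}$ of the $\c$-matrices (where facts (i)--(ii) and sign-coherence enter) and the positivity of the $w_k$ when $r \geq 2$. A less computational alternative for that positivity is to recall that the $\c$-vectors arising in $\Mut(\h{Q_r})$ are, up to sign, the dimension vectors of the indecomposable rigid $Q_r$-representations, of which there are infinitely many precisely because $Q_r$ is representation-infinite, so $\v Q$ is reached from $[\h Q]$ along at most one oriented path of $\oo\E(Q)$.
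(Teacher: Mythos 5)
Your proof is correct, but it follows a genuinely different route from the paper. The paper disposes of the $A_2$ case by citing Theorem \ref{theorem:Dynkinminmax}, and for the representation-infinite case it argues representation-theoretically: the admissible source ordering gives one maximal green sequence by Lemma \ref{lem:lminacyclic}, any other must start at the sink and is then forced to alternate, and each forward mutation of silting objects changes the left aisle in $\DD^b(\modd \kQ)$ by a single indecomposable, whereas $\DD^{\leq 0}_{H[1]}$ contains infinitely many more indecomposables than $\DD^{\leq 0}_{H}$ when $Q$ is representation-infinite, so the forced branch never reaches $H[1]$. You instead do explicit $\c$-matrix bookkeeping on the generalised Kronecker quiver: the forced branch carries the pair of rows $\ens{w_k,-w_{k-1}}$ with $w_{k+1}=rw_k-w_{k-1}$, which is $6$-periodic for $r=1$ (giving $(2,1,2)$) and strictly positive for all $k\geq 1$ when $r\geq 2$ (so the branch never reaches $\v Q$). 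Your approach is elementary and self-contained -- it uses only the mutation rule and sign-coherence, it simultaneously proves the count $|\green(Q)|=2$ in type $A_2$ (which the paper's citation of Theorem \ref{theorem:Dynkinminmax} does not literally provide), and it effectively determines the whole oriented exchange graph in rank two; the paper's aisle-counting idea, by contrast, is the one that scales to the affine and wild three-vertex theorems later in the paper. One point you should make explicit: your ``fact (ii)'' as stated needs the small case analysis that if the $r$ arrows in $P_k$ ran from the green vertex to the red one, then mutating at the green vertex would leave the red row unchanged and produce the all-red quiver $\v Q$; hence whenever the branch continues, the arrows do run from red to green and $[b_{hg}]_+=r$, which is exactly what your recursion uses (and the base case $P_1=\mu_2(\h Q)$ should be checked directly, as you implicitly do). Your closing ``less computational alternative'' via rigid indecomposables is too vague to stand on its own, but it is inessential to the argument.
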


	\subsection{The affine case}
		In the affine case, our main theorem is:
		\begin{theorem}\label{theorem:affine}
			Let $Q$ be an affine quiver. Then $\green(Q)$ is finite and non-empty.
		\end{theorem}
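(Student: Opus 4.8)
The plan is to separate the two assertions. Non-emptiness is already essentially known: since an affine quiver $Q$ is acyclic, Lemma \ref{lem:lminacyclic} gives $\green(Q) \neq \emptyset$ (any admissible numbering of $Q_0$ by sources is a maximal green sequence). So the entire content of the theorem is the \emph{finiteness}. By Proposition \ref{prop:bijmaxgreenpaths}, it suffices to show that $\oo\E(Q)$ contains only finitely many oriented paths from the source $[\h Q]$ to the sink $[\v Q]$. Since $\oo\E(Q)$ is acyclic (Proposition \ref{prop:acyclicity}), any such path is a simple path, so it is enough to exhibit a \emph{finite} subgraph of $\oo\E(Q)$ through which every maximal green sequence must pass, or — more robustly — to bound the length of any maximal green sequence by a constant depending only on $Q$; a uniform length bound together with the finiteness of the mutation directions at each step immediately yields $|\green(Q)| < \infty$.

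The key step is therefore a length bound, and here I would use the representation theory of the tame hereditary algebra $\k Q$. Recall (to be developed in Section \ref{section:reptheory}) that vertices of $\oo\E(Q)$ traversed by a green sequence correspond to functorially finite torsion classes / support $\tau$-tilting objects, and the $\c$-vectors appearing along the sequence are (up to sign) the dimension vectors of the indecomposable summands of the corresponding bricks or, in the hereditary acyclic setting, of rigid indecomposable $\k Q$-modules. For a tame hereditary algebra the indecomposables split into the preprojective component, the preinjective component, and the regular tubes; the rigid regular modules lie in the (finitely many) exceptional tubes and, within a tube of rank $r$, only those of quasi-length $< r$ are rigid — hence there are only \emph{finitely many} rigid regular indecomposables. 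The crucial structural fact I would isolate and prove is: along any maximal green sequence the ``wall-crossing'' produces each rigid indecomposable $\k Q$-module as a $\c$-vector at most once, and moreover a green sequence can cross only finitely many preprojective/preinjective walls before it is forced to terminate, because the slope (or defect) function forces the sequence to move monotonically from the preprojective side to the preinjective side through the central regular region. This monotonicity is the affine-specific input: the defect $\partial$ is a linear form with $\partial = 0$ exactly on the regular modules, and I would argue that the $\c$-vectors encountered have non-decreasing defect along the sequence, with strict increase except while the sequence is confined to the finite regular part.

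Concretely, the steps in order: (1) reduce to a length bound via Propositions \ref{prop:bijmaxgreenpaths} and \ref{prop:acyclicity}; (2) recall the dictionary between the vertices/arrows of $\oo\E(Q)$ crossed by a green sequence and torsion classes / rigid modules over $\k Q$, so that the sequence of $\c$-vectors becomes a sequence of dimension vectors of rigid indecomposables, each occurring at most once (this ``no repeats'' statement is what needs care and is the main obstacle); (3) invoke the trichotomy preprojective/regular/preinjective for tame hereditary algebras and the finiteness of rigid regular indecomposables; (4) establish the defect-monotonicity along a green sequence, so the sequence visits preprojective walls, then regular walls, then preinjective walls, in that order, and cannot return; (5) bound the preprojective and preinjective contributions — here one uses that once the sequence has entered the preinjective part the remaining quiver is, in the relevant sense, of finite type (the torsion class is generated by finitely many preinjectives) and the already-established finite-type bound applies, and dually for the preprojective tail by the opposition bijection of Proposition \ref{prop:opposition}; (6) combine to get $\ell(\i) \le$ some explicit constant ($|Q_0|$ plus the number of rigid regulars plus two finite-type contributions), whence $\green(Q)$ is finite.

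The hard part will be step (2)–(4): proving that no rigid indecomposable reappears as a $\c$-vector and that the defect is monotone. The ``no repeats'' claim should follow from the fact that $\oo\E(Q)$ is the Hasse quiver of a poset (Section \ref{section:reptheory}) together with the identification of $\c$-vectors with dimension vectors of bricks in the corresponding torsion classes, since a fixed brick labels a fixed wall and an oriented path in a poset cannot recross a wall; but making the regular/preinjective monotonicity watertight — in particular ruling out a green sequence that oscillates between regular tubes and the preprojective component — is the delicate point, and may require an explicit analysis of how mutation at a green vertex changes the defects of the $\c$-vectors, i.e. a direct computation with the defect form on the matrix-mutation rule.
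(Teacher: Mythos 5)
Your reduction to a uniform length bound (via Propositions \ref{prop:bijmaxgreenpaths} and \ref{prop:acyclicity}) and the non-emptiness via Lemma \ref{lem:lminacyclic} are fine, and the ``no repeats'' property of the $\c$-vectors along a single green sequence is indeed true (the torsion classes increase along the sequence, and a brick labelling a crossed wall lies in every later torsion class, so it cannot label a later wall). The gap is step (4), on which the whole count rests: the defect-monotonicity claim is false. Along the maximal green sequence given by an admissible numbering of $Q_0$ by sources, the proof of Lemma \ref{lem:lminacyclic} shows that the $\c$-vector mutated at the $k$th step is still the unit vector $e_{i_k}$, so the walls crossed are exactly the simples $S_{i_1},\ldots,S_{i_n}$; the first of these is the simple at a source of $Q$, which equals the injective $I_{i_1}$ and is therefore preinjective. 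Already for the Kronecker quiver $\widetilde A_{1,1}$ the unique maximal green sequence (Lemma \ref{lem:2vertices}) crosses $S_1=I_1$ (preinjective) and then $S_2=P_2$ (preprojective), the exact opposite of your claimed order. Nor can you rescue the argument by reversing the sign of the defect: for $\widetilde A_{2,1}$ all three arrows leaving the source of $\oo\E(Q)$ lie on maximal green sequences (Example \ref{exmp:2bricks} exhibits the five paths), so some maximal green sequence begins by mutating at the sink of $Q$, i.e.\ by crossing the preprojective wall first. Hence there is no uniform monotone direction, the ``preprojective phase / regular phase / preinjective phase'' structure does not exist, and step (5) — which is in any case only sketched — has nothing to stand on. Since the preprojective and preinjective components contain infinitely many rigid indecomposables, the ``no repeats'' property together with tameness bounds only the regular contribution, and your finiteness argument does not close.

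What is missing is a mechanism, independent of any monotonicity, preventing a path from $[\h Q]$ to $[\v Q]$ from travelling arbitrarily deep into the preprojective (dually preinjective) component and still returning. The paper's proof supplies exactly this, working not with walls but with the indecomposable summands of the silting objects along the path in the interval between $\kQ$ and $\kQ[1]$: by Lemma \ref{lem:tausincere} there is an $N$ with $\tau^{-k}P_i$ sincere for $k\geq N$, whence $\Ext^1_{\DD}(\tau^{l}P_j[1],\tau^{-k}P_i)\simeq D\Hom_{\DD}(\tau^{-k-l}P_i,I_j)\neq 0$ for all $l\geq 0$; since over a tame hereditary algebra such a silting object has at most $|Q_0|-2$ regular summands, an object containing a summand $\tau^{-k}P_i$ with $k\geq 2N$ has two ``deep'' preprojective summands, one of which survives each mutation, so no summand from $\II$ can ever be created and the path can never terminate at $\kQ[1]$. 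This bounds the $\tau$-depth of preprojective and (dually) preinjective summands, and together with the finiteness of rigid regular indecomposables yields finitely many silting objects on such paths, hence finitely many maximal green sequences. If you want to keep a wall-crossing formulation, you would need the analogous quantitative statement for $\c$-vectors, which amounts to the same sincerity/$\Ext$ argument in different clothing.
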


		\begin{exmp}\label{exmp:2bricks}
			Consider the quiver $$\xymatrix@=1em{ && 2\ar[rd] \\ Q:& 1 \ar[ru] \ar[rr] && 3}$$ of affine type $\widetilde A_{2,1}$. Then locally around $[\h Q]$, the oriented exchange graph $\oo\E(Q)$ can be depicted as follows where the unique source is circled in green and the unique sink is circled in red. Here the faces are labelled by the denominator vectors of the cluster variables in the corresponding clusters, when expressed in the initial seed corresponding to $[\h Q]$.

			\vspace{1em}

			\begin{center}
				\begin{tikzpicture}[scale = 1.3]
					\begin{scope}[decoration={
								markings,
								mark=at position 0.6 with {\arrow{triangle 60}}}
								] 
						\foreach \y in {0}
						{
							\foreach \x in {-4,-2,...,4}
							{
								\fill (\x,\y+1) circle (.067);
								\fill (\x+1,\y-1) circle (.067);
							}
							\foreach \x in {-4,-3,...,4}
							{
								\fill (\x,\y) circle (.067);
							}
						}

						\draw[postaction={decorate}] (-2,1) -- (-4,1);
						\draw[postaction={decorate}] (-2,1) -- (0,1);
						\draw[postaction={decorate}] (0,1) -- (2,1);
						\draw[postaction={decorate}] (4,1) -- (2,1);

						\draw[postaction={decorate}] (-1,-1) -- (-3,-1);
						\draw[postaction={decorate}] (-1,-1) -- (1,-1);
						\draw[postaction={decorate}] (1,-1) -- (3,-1);
						\draw[postaction={decorate}] (5,-1) -- (3,-1);

						\draw[postaction={decorate}] (-2,0) -- (-2,1);
						\draw[postaction={decorate}] (-1,0) -- (-1,-1);
						\draw[postaction={decorate}] (0,0) -- (0,1);
						\draw[postaction={decorate}] (1,-1) -- (1,0);
						\draw[postaction={decorate}] (2,1) -- (2,0);
						\draw[postaction={decorate}] (3,-1) -- (3,0);

						\draw[postaction={decorate}] (-2,0) -- (-3,0);
						\draw[postaction={decorate}] (-1,0) -- (-2,0);
						\draw[postaction={decorate}] (-1,0) -- (0,0);
						\draw[postaction={decorate}] (0,0) -- (1,0);
						\draw[postaction={decorate}] (1,0) -- (2,0);
						\draw[postaction={decorate}] (3,0) -- (2,0);
						\draw[postaction={decorate}] (4,0) -- (3,0);

						\draw (4,0) -- (5,0);
						\draw (4,1) -- (5,1);
						\draw (-4,-1) -- (-3,-1);
						\draw (-3,0) -- (-4,0);

						\draw (-3,0) -- (-3,-1);
						\draw (-4,1) -- (-4,0);

						\draw (4,0) -- (4,1);
						\draw (5,-1) -- (5,0);

						\draw[thick,green] (-1,0) circle (.15);
						\draw[thick,red] (2,0) circle (.15);

						\fill (0,1.5) node {$\alpha_1 + \alpha_3$};
						\fill (0,-1.5) node {$\alpha_2$};

						\fill (-3,.5) node {$\alpha_3$};
						\fill (-1,.5) node {$-\alpha_2$};
						\fill (1,.5) node {$\alpha_1$};
						\fill (3,.5) node {$2 \alpha_1 + \alpha_2 + \alpha_3$};

						\fill (-2,-.5) node {$-\alpha_1$};
						\fill (0,-.5) node {$-\alpha_3$};
						\fill (2,-.5) node {$\alpha_1 + \alpha_2$};
						\fill (4,-.5) node {$2 \alpha_1 + 2 \alpha_2 + \alpha_3$};
					\end{scope}
				\end{tikzpicture}
			\end{center}
			We see that, in this case, there are exactly five maximal green sequences.

		\end{exmp}

		For additional examples, we refer the reader to Appendix \ref{section:exmpaffines}.

	\subsection{Wild quivers with three vertices}
		For the wild case, the situation appears to be more complicated. It is in fact known that for any (connected) wild quiver $Q$ with at least three vertices, there exist regular tilting $\kQ$-modules \cite{Ringel:ARwild}. Therefore, the proof of Theorem \ref{theorem:affine} cannot be reproduced for wild quivers. However, we will prove in Proposition \ref{prop:noregsgreen} that for quivers with three vertices, regular tilting $\kQ$-modules do not appear along maximal green sequences so that we are still able to deduce the finiteness of $\green(Q)$ in this case.

		\begin{theorem}\label{theorem:3vertices}
			Let $Q$ be an acyclic quiver with three vertices. Then $\green(Q)$ is finite and non-empty.
		\end{theorem}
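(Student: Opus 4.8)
The plan is to reduce the problem to a representation-theoretic statement about $\kQ$-modules and then exploit the very restricted structure of the module category of a hereditary algebra with three simples. First I would recall from Section \ref{section:reptheory} that $\oo\E(Q)$ is isomorphic to the Hasse graph of a poset of functorially finite torsion classes (equivalently, of support tilting modules) over $\kQ$, and that under this identification a maximal green sequence corresponds to a finite maximal chain from the zero torsion class to $\modd \kQ$. Non-emptiness is already handled: since $Q$ is acyclic, Lemma \ref{lem:lminacyclic} gives $\green(Q) \neq \emptyset$. So the whole content is \emph{finiteness}. The key reduction is to show that along any maximal green sequence the intermediate ``cluster-tilting-like'' objects avoid the regular part of the AR-quiver in a controlled way — precisely the content the authors announce as Proposition \ref{prop:noregsgreen} (``regular tilting $\kQ$-modules do not appear along maximal green sequences'') together with Proposition \ref{prop:nogreeninfinite}. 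I would invoke Proposition \ref{prop:noregsgreen} as a black box: it tells us that every vertex visited by a maximal green path corresponds to a (support) tilting module whose indecomposable summands are either preprojective or preinjective, never a ``genuinely regular'' tilting module.

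The main steps, in order, are then: (1) set up the correspondence between $\oo\E(Q)$ and the poset of support tilting modules, and translate ``maximal green sequence'' into ``finite maximal chain''; (2) using Proposition \ref{prop:noregsgreen}, argue that any such chain, after it leaves a neighbourhood of the preprojective component and before it reaches the preinjective component, must pass through a seed which is a slice — or more precisely, that the set of tilting modules occurring along \emph{some} maximal green sequence is finite because each such tilting module is built from a bounded set of indecomposables (finitely many preprojectives up to a bounded ``height'', finitely many preinjectives, and — crucially, by the three-vertex hypothesis — no regular summands in a tilting configuration that can lie on a maximal green path); (3) conclude that the oriented graph induced on these finitely many vertices is finite and acyclic (Proposition \ref{prop:acyclicity}), hence has finitely many oriented source-to-sink paths, and apply Proposition \ref{prop:bijmaxgreenpaths}. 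For step (2) one uses that over a wild hereditary algebra with three simples the preprojective and preinjective components are standard and that a module which is a partial tilting module together with a second partial tilting module ``complementary'' across a regular module forces, by rank/homological arguments special to rank three, the whole thing to fail to be reachable by only-green mutations.

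The hard part will be step (2): controlling \emph{how far into the preprojective component} a maximal green sequence can reach, i.e. proving an a priori bound on the ``preprojective height'' of any indecomposable occurring as a summand of a seed along a maximal green sequence. Without such a bound one only knows each individual maximal green sequence is finite (it is an injective path in an acyclic graph) but not that there are finitely many of them, since $\oo\E(Q)$ is itself infinite. I expect this to be handled by combining Proposition \ref{prop:noregsgreen} with the observation that once a maximal green sequence has ``mutated past'' a complete slice it is forced into the preinjective region and must terminate; so the relevant sub-poset is the interval between $[\h Q]$ and the first slice reached, union finitely many preinjective configurations, and the three-vertex hypothesis is exactly what guarantees this interval is finite (for four or more vertices regular tilting modules obstruct the argument, as the authors note via \cite{Ringel:ARwild}). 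Once finiteness of the ambient sub-graph is established, the rest is immediate from the already-proved Propositions \ref{prop:bijmaxgreenpaths} and \ref{prop:acyclicity}.
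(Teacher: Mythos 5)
Your overall plan follows the paper's strategy (invoke Proposition \ref{prop:noregsgreen}, show that only finitely many vertices of $\oo\E(Q)$ can occur on a path from $[\h Q]$ to $[\v Q]$, then count paths in a finite acyclic graph, with non-emptiness from Lemma \ref{lem:lminacyclic}), but it has a genuine gap exactly at the step you yourself flag as the hard one, and the mechanism you propose for that step does not work. First, a smaller point: Proposition \ref{prop:noregsgreen} does \emph{not} say that every silting object on a maximal green sequence has only preprojective and preinjective summands; it says it has \emph{at most one} regular summand. So even granting the proposition as a black box, you still must control the silting objects with exactly one regular summand. The paper does this by observing that just before a regular summand can appear, the other two summands are non-regular and range over a finite set, and each such almost complete part admits only finitely many complements; your plan silently assumes this case away.

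Second, and more seriously, the a priori bound on the ``preprojective height'' is not obtained by your slice argument. Green mutations can go arbitrarily deep into the preprojective component (for instance along $H > \tau^{-1}H > \tau^{-2}H > \cdots$), and mutating past a complete slice does not force the sequence into the preinjective region; what fails for such deep sequences is not that they stop, but that they can never \emph{terminate} at $H[1]$. The paper's actual mechanism is Lemma \ref{lem:tausincere}: for $k$ large enough $\tau^{-k}P_i$ is sincere, hence $\Ext^1_{\DD}(\tau^{l}P_j[1],\tau^{-k}P_i) \simeq D\Hom_{\DD}(\tau^{-k-l}P_i, I_j) \neq 0$ for all $l \geq 0$, so a silting object containing such a deep preprojective summand (whose remaining summands are then also forced deep into $\PP$) can never be continued by green mutations to an object with all summands in $\II$ --- exactly the argument in the proof of Theorem \ref{theorem:affine}, applied here to the non-regular summands. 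This, together with its dual for $\II$ and the finite count of regular complements above, is what makes the set of silting objects occurring on maximal green sequences finite. Without this quantitative input, your step (2) remains an expectation rather than a proof, and the finiteness claim is unsupported: as you note, $\oo\E(Q)$ itself is infinite, so acyclicity alone gives nothing.
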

		The proof is given in Section \ref{section:proofsinfinite}.
		
		If $Q$ is a wild quiver with at least four vertices, we do not know whether $\green(Q)$ is finite or not. In this case, we could only compute a few examples which yield some evidence for the finiteness of this number.

		As previously noticed, if Conjecture \ref{conj:intervals} holds, in order to prove that $\green(Q)$ is a finite set for a given quiver $Q$, it is enough to find the smallest $l \geq 1$ such that $\green_{l}(Q) \neq \emptyset$ and such that $\green_{l+1}(Q) = \emptyset$; in this case $\green(Q) = \green_{\leq l}(Q)$. We now provide an example of a wild quiver for which we computed such integers with the computer program \verb\Quiver Mutation Explorer\ \cite{DupontPerotin:QME}.

		\begin{exmp}\label{exmp:wild1}
			Consider the quiver 
			$$\xymatrix@=1em{
				Q: 1 \ar[r] & 2 \ar[r] & 3 \ar@<-2pt>[r]\ar@<+2pt>[r] & 4.
			}$$
			Then 
			$$\begin{array}{c|c}
				l 	& |\green_l(Q)|\\
				\hline \hline
				4	& 1 \\
				\hline
				5	& 7 \\
				\hline
				6	& 6 \\
				\hline
				7	& 7 \\
				\hline
				8	& 0 \\
			\end{array}$$
			Therefore, $\ell_{\min}(Q) = 4$, $\ell^0_{\max}(Q) = 7$ and $|\green^0(Q)|=21$.
		\end{exmp}

\section{Silting, tilting, cluster-tilting and $t$-structures}\label{section:reptheory}
	As was already mentioned, given a cluster quiver $Q$, the oriented exchange graph $\oo\E(Q)$ we are studying in this article is an orientation of the cluster exchange graph $\E(Q)$ of the cluster algebra $\AA_Q$, which is the dual graph of the cluster complex $\Delta(\AA_Q)$ introduced in \cite{cluster2}. The same exchange graph also arises naturally in representation theory. This was first observed in \cite{BMRRT,CCS1} where it was proved that if $Q$ is an acyclic quiver, then the clusters in $\AA_Q$ correspond bijectively to the cluster-tilting objects in the so-called \emph{cluster category} $\CC_Q$ of $Q$ in such a way that cluster mutations correspond to mutations of cluster-tilting objects in $\CC_Q$. This generalises to arbitrary skew-symmetric cluster algebras by considering the cluster-tilting theory of certain generalised cluster categories, see \cite{Amiot:clustercat,Plamondon:ClusterAlgebras}. The aim of this section is to recall how $\E(Q)$ and $\oo\E(Q)$ arise in the context of additive categorifications and related topics in representation theory.
	
	In the particular case where $Q$ is acyclic, identifying $\modd \kQ$ with a subcategory of the cluster category $\CC_Q$, the tilting $\kQ$-modules become cluster-tilting objects in $\CC_Q$ and therefore, the cluster complex $\Delta(\AA_Q)$ contains a certain subcomplex whose maximal simplices correspond to the tilting $\kQ$-modules. Already in 1987 Ringel observed that the set $\TT_A$ of tilting modules over a finite dimensional algebra $A$ carries the structure of a simplicial complex. The study of this complex and of a poset structure on $\TT_A$ was initiated in \cite{RiedtmannSchofield} and further carried out by Happel and Unger \cite{Unger:3vertices,Unger:simplicial,HU:tiltinghereditary}. We refer to the contributions of Ringel and of Unger in the Handbook of tilting theory for further details \cite{Ringel:handbook,Unger:handbook}.

	In the first part of this section we recall the related notions for tilting modules, and then describe some generalisation to the setup of derived categories. We finally explain the link to cluster categories. 

	Throughout, we fix an algebraically closed field $\k$ and all the algebras we consider are $\k$-algebras. If there is no risk of confusion, for a finite-dimensional algebra $A$, we denote by $\DD = \DD^b(\modd A)$ its bounded derived category with shift functor $[1]$.

	\subsection{Tilting modules and their mutations}
		Let $A$ be a basic connected finite-dimensional $\k$-algebra with $n$ non-isomorphic simple modules.

		\begin{defi}[Tilting modules]
			A finitely generated $A$-module $T$ is called \emph{tilting} if
			\begin{enumerate}
				\item $\pdim T \leq 1$,
				\item $\Ext^i_A(T,T) = 0$ for all $i>0$,
				\item $A$ admits a coresolution in $\modd A$ by $A$-modules in $\add T$.
			\end{enumerate}
		\end{defi}

		A poset structure on the set $\TT_A$ of isomorphism classes of basic tilting modules is defined in \cite{RiedtmannSchofield} by setting
		$$T \leq T' \Leftrightarrow T'^\perp \subseteq T^\perp$$
		where $$T^\perp = \{ X \in \modd A \ | \ \Ext^i(T,X) =0 \text{ for all } i > 0 \}.$$
		We denote by $\oo\KK_{\modd A}$ the Hasse graph of this poset of tilting $A$-modules.
		If $A$ is hereditary, it is shown in \cite{HU:tiltinghereditary} that the unoriented graph underlying this Hasse graph is the dual graph of the complex of tilting $A$-modules: there is an arrow $T \to T'$ in $\oo\KK_{\modd A}$ precisely when $T = \bigoplus_j T_j$ and $T' = \mu_k^+(T) = (T/T_k) \oplus T'_k$ where $T'_k$ is the \emph{forward mutation} of $T$ at some $i$ defined as the cokernel of a minimal left $\add(T/T_k)$-approximation $T_k \to M$ (we usually slightly abuse notations and write $T/T_k$ for $\bigoplus_{j \neq k} T_j$).

		The poset $\TT_A$ has $A$ as unique maximal element, and in case the algebra $A$ is Gorenstein, it has $DA$ as unique minimal element.

	\subsection{Silting objects and their mutations}
		The Hasse graph of the poset of tilting $A$-modules is not $n$-regular since not all tilting modules admit mutations. There are various ways to extend the notion of tilting module to a larger class of objects where mutations are always possible. We refer to \cite{KY:simpleminded} for a more complete picture on those various concepts, and we just recall the concept of silting objects here: 

		Let $\DD$ denote the bounded derived category of $\modd A$ with shift functor $[1]$. 
		\begin{defi}[Silting objects, \cite{KV:aisles}]
			An object $T$ in $\DD$ is called \emph{silting} if:
			\begin{enumerate}
				\item $\Hom_{\DD}(T,T[i])=0$ for any $i>0$, 
				\item $\thick(T) = \DD$
			\end{enumerate}
			where $\thick(T)$ denotes the thick subcategory generated by $T$ in $\DD$.
		\end{defi}

		It is shown in \cite{AI:siltingmutation} that the set $\TT_{\DD}$ of isomorphism classes of basic silting objects is turned into a poset by setting 
		$$T \leq T' \Leftrightarrow T'^\perp \subseteq T^\perp,$$
		where as for modules
		$$ T^\perp = \{ X \in \DD \ | \ \Hom_{\DD}(T,X[i]) =0 \text{ for all } i > 0 \}.$$

		Aihara and Iyama also show in \cite{AI:siltingmutation} that the unoriented graph underlying the Hasse graph $\oo\KK_{{\DD}}$ of $\TT_{\DD}$ is the dual graph of the complex of silting objects in ${\DD}$: there is an arrow $T \to T'$ in $\oo\KK_{{\DD}}$ precisely when $T = \bigoplus_j T_j$ and $T' = \mu_k^+(T) = (T/T_k) \oplus T'_k$ where $T'_k$ is the \emph{forward mutation} of $T$ at some $k$ defined as 
		$$T'_k = \cone ( T_k \to \bigoplus_{j \neq k} \irr(T_k,T_j)^* \otimes T_j).$$

		A \emph{tilting object} in $\DD$ is a silting object $T$ such that $\Hom_{\DD}(T,T[i])=0$ for any $i \neq 0$. In particular, any tilting $A$-module $T$ viewed as a stalk complex in $\DD$ is a tilting object in $\DD$, and therefore a silting object in $\DD$. It follows immediately from the definition that if $T$ and $T'$ are tilting $A$-modules such that $T' = \mu_k^+(T)$ as $A$-modules, then $T' = \mu_k^+(T)$ as silting objects in $\DD$.

	\subsection{$t$-structures and their mutations}
		Let $T$ be a silting object in ${\DD}$ and consider the full subcategories in ${\DD}$:
		$${\DD}^{\leq 0}_T = \ens{N \in {\DD} \ | \ \Hom_{{\DD}}(T,N[i])=0 \text{ for all } i >0}$$
		$${\DD}^{\geq 0}_T = \ens{N \in {\DD} \ | \ \Hom_{{\DD}}(T,N[i])=0 \text{ for all } i <0}.$$

		Then $({\DD}^{\leq 0}_T,{\DD}^{\geq 0}_T)$ is a bounded $t$-structure on ${\DD}$ with \emph{length heart} $\HH_T$ (that is, a heart in which every object has finite length), see for instance \cite{KY:simpleminded}. The simple \emph{forward mutation} (also called \emph{forward tilt}) of a heart of a bounded $t$-structure in ${\DD}$ defined in \cite{HRS:tiltingabelian} corresponds to the forward mutation of the respective silting object in ${\DD}$, see \cite{AI:siltingmutation,KY:simpleminded}. 

%
%
%
%
%
%
%
%
%

		Also from these papers, we summarise the situation as follows: isomorphism classes of basic silting objects in ${\DD}$ correspond bijectively to bounded $t$-structures with length heart in ${\DD}$. The $t$-structures are ordered by inclusion of their left aisles, and the forward mutation describes the arrows in the Hasse graph of these posets, see Figure \ref{fig:fwdmutsilting}.

		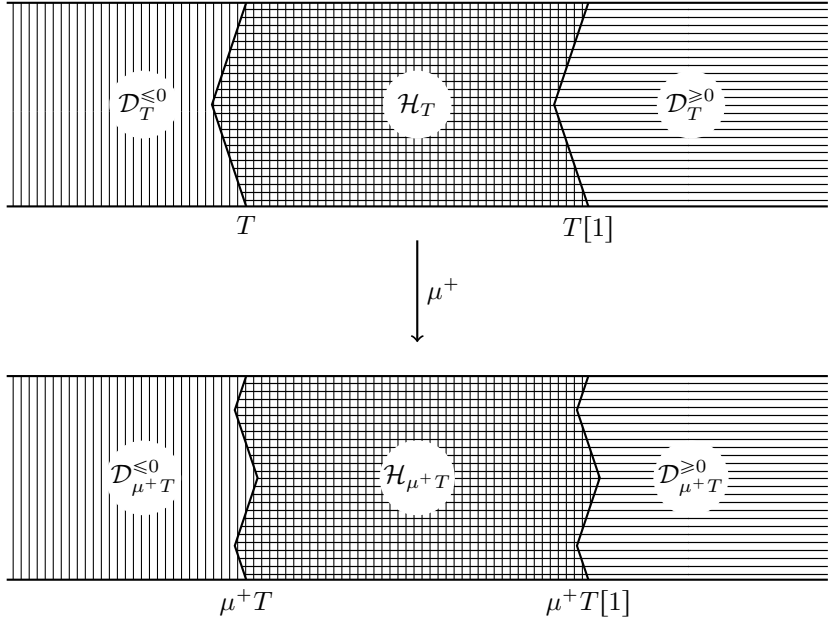
\begin{figure}[htb]
			\begin{center}
				\begin{tikzpicture}[scale = .45]

					\fill[pattern = vertical lines] (-12,3) -- (5,3) -- (4,0) -- (5,-3) -- (-12,-3) -- cycle;
					\fill[pattern = horizontal lines] (12,3) -- (-5,3) -- (-6,0) -- (-5,-3) -- (12,-3) -- cycle;
			
					\draw[thick] (-12,3) -- (12,3);

					\draw[thick] (-5,3) -- (-6,0) -- (-5,-3);
					\draw[thick] (5,3) -- (4,0) -- (5,-3);

					\draw[thick] (-12,-3) -- (12,-3);

					\fill[white] (0,0) circle (1);
					\fill (0,0) node {$\mathcal H_T$};

					\fill[white] (-8,0) circle (1);
					\fill (-8,0) node {$\mathcal D^{\leq 0}_T$};

					\fill[white] (8,0) circle (1);
					\fill (8,0) node {$\mathcal D^{\geq 0}_T$};

					\fill (-5,-3) node [below] {$T$};

					\fill (5,-3) node [below] {$T[1]$};

					\draw[thick,->] (0,-4) -- (0,-7);
					\fill (0,-5.5) node [right] {$\mu^+$};

					\draw[thick] (-12,-8) -- (12,-8);
					\draw[thick] (-12,-14) -- (12,-14);

					\fill[pattern = vertical lines] (-12,-8) -- (5,-8) -- (5-1/3,-9) -- (5+1/3,-11) -- (5-1/3,-13) -- (5,-14) -- (-12,-14) -- cycle;
					\draw[thick] (5,-8) -- (5-1/3,-9) -- (5+1/3,-11) -- (5-1/3,-13) -- (5,-14);

					\fill[pattern = horizontal lines] (12,-8) -- (-5,-8) -- (-5-1/3,-9) -- (-5+1/3,-11) -- (-5-1/3,-13) -- (-5,-14) -- (12,-14) -- cycle;
					\draw[thick] (-5,-8) -- (-5-1/3,-9) -- (-5+1/3,-11) -- (-5-1/3,-13) -- (-5,-14);

					\fill[white] (0,-11) circle (1.1);
					\fill (0,-11) node {$\mathcal H_{\mu^+T}$};

					\fill[white] (-8,-11) circle (1.1);
					\fill (-8,-11) node {$\mathcal D^{\leq 0}_{\mu^+T}$};

					\fill[white] (8,-11) circle (1.1);
					\fill (8,-11) node {$\mathcal D^{\geq 0}_{\mu^+T}$};

					\fill (-5,-14) node [below] {$\mu^+T$};

					\fill (5,-14) node [below] {$\mu^+T[1]$};
				\end{tikzpicture}
			\end{center}
			\caption{Forward mutation of a silting object in $\DD$ and the inclusion of the corresponding left aisles.}\label{fig:fwdmutsilting}
		\end{figure}

		Since there is always an infinite number of silting objects in the derived category, we restrict our study to an interval with maximal element $A$ and minimal element $A[1]$, thus slightly larger than the poset of tilting $A$-modules. We denote by $\oo\E_{\DD}(A,A[1])$ the Hasse graph of the interval formed by the silting objects which are between $A$ and $A[1]$ for this partial order. This interval, which was already considered in \cite{KQ:exchangeCY} appears to be relevant for the purpose of maximal green sequences, dilogarithm identities or for BPS quivers theory \cite{Keller:dilogarithm,CCV:Braids,BD:doublecover}.

	\subsection{Cluster-tilting objects and their mutations}
		\begin{defi}[Cluster-tilting objects]
			A \emph{cluster-tilting object} $T$ in a triangulated category $\CC$ is an object $T$ such that for any $X$ in $\CC$, we have
			$$\Ext^1_{\CC}(T,X) = 0 \Leftrightarrow X \in \add T.$$
		\end{defi}

		Cluster-tilting objects were first considered in \cite{BMRRT} where it was proved that the combinatorics of cluster-tilting objects in cluster categories were governed by mutations in (simply-laced) acyclic cluster algebras.

		Given an acyclic quiver $Q$, its path algebra $\kQ$ is a finite-dimensional hereditary algebra. We denote by $\Gamma$ the \emph{Ginzburg dg-algebra} associated with the quiver with potential $(Q,0)$. It is a 3-Calabi-Yau dg-algebra concentrated in negative degrees, see \cite{Keller:deformed}. We denote by $\DD\Gamma$ the derived category of dg-$\Gamma$-modules, by $\per\Gamma$ its perfect subcategory and by $\DDfd\Gamma$ the full subcategory of $\DD\Gamma$ formed by those dg-modules with finite-dimensional total homology. The \emph{cluster category} of $Q$ is defined in \cite{Amiot:clustercat} as the triangulated quotient $\CC_Q = \per \Gamma/\DDfd\Gamma$. It is a Hom-finite triangulated 2-CY category which is naturally triangle-equivalent to the cluster category defined as an orbit category in \cite{BMRRT}.

		Then $\oo\E(Q) = \oo\E_{\DD \kQ}(\kQ,\kQ[1])$ is an orientation of the graph of mutations of the cluster-tilting objects in $\CC_Q$ and the unique source corresponds to the image of $\Gamma$ under the canonical morphism $\per \Gamma \fl \CC_Q$, see \cite{KellerNicolas:weight} and also \cite{KQ:exchangeCY,KY:simpleminded,Qiu:csortable}.

		For a general cluster quiver $Q$ and a non-degenerate potential $W$ on $Q$, it is still possible to form the triangulated quotient $\CC_{Q,W} = \per \Gamma_{Q,W}/\DDfd\Gamma_{Q,W}$ where $\Gamma_{Q,W}$ is the Ginzburg dg-algebra associated with the quiver with potential $(Q,W)$. Then $\oo\E(Q)$ is an orientation of the connected component of the graph of mutations of cluster-tilting objects in $\CC_{Q,W}$ which contains the image of $\Gamma_{Q,W}$ under the canonical morphism $\per \Gamma_{Q,W} \fl \CC_{Q,W}$. 

		If $\Sigma$ denotes the suspension functor in $\DDfd\Gamma_{Q,W}$, a maximal green sequence for $Q$ corresponds in this context to a sequence of forward mutations from the canonical heart $\HH$ of $\DDfd\Gamma_{Q,W}$ to its shift $\Sigma \HH$, see \cite{Keller:dilogarithm}.

	\subsection{Patterns}\label{ssection:patterns}
		Let $Q$ be a cluster quiver with $n$ vertices and $\mathbb T_n$ denote the $n$-regular tree so that the edges adjacent to any vertex in $\mathbb T_n$ are labelled by $\ens{1, \ldots, n}$. Let $t_0$ be a vertex in that graph. To any vertex $t$ in $\mathbb T_n$ we can associate an ice quiver $Q(t)$ such that $Q(t_0) = \h Q$ and such that $t$ and $t'$ are joined by an edge labelled by $k$ in $\mathbb T_n$ if and only if $Q(t') = \mu_k(Q(t))$. This endows $\mathbb T_n$ with a structure of an oriented graph $\oo{\mathbb T}_n$ by orienting the edge $\xymatrix{ t \ar@{-}[r]^k & t'}$ towards $t'$ if and only if $k$ is green in $Q(t)$.

		Let $W$ be a non-degenerate potential on $Q$ and $\Gamma$ be the corresponding Ginzburg dg-algebra. The category $\DDfd\Gamma$ (with suspension functor $\Sigma$) is endowed with a natural $t$-structure with length heart $\HH$. As explained in \cite{Keller:derivedcluster}, we can associate to any vertex $t$ in $\mathbb T_n$ a heart $\HH(t)$ in $\DDfd\Gamma$ such that $\HH(t_0) = \HH$ and such that there is an arrow $\xymatrix{ t \ar[r]^k & t'}$ in $\oo{\mathbb T}_n$ if and only if $\HH(t')$ is obtained from $\HH(t)$ by a forward mutation at the simple $S_k(t)$ in $\HH(t)$, see also \cite{KQ:exchangeCY,KY:simpleminded}. This is called the \emph{pattern of hearts in $\DDfd\Gamma$}. Then a maximal green sequence $(i_1, \ldots, i_n)$ corresponds to a path $\xymatrix{t_0 \ar[r]^{i_1} & t_1 \ar[r]^{i_2} & \cdots \ar[r]^{i_l} & t_l}$ in $\oo{\mathbb T}_n$ such that $\HH(t_l) \simeq \Sigma \HH(t)$.

		If $Q$ is acyclic, $H=\kQ$ and $\DD= \DD^b(\modd H)$ has shift functor $[1]$, we can also associate to any vertex $t$ in $\mathbb T_n$ a silting object $T(t)$ in $\DD$ such that $T(t_0) = \kQ$ and such that there is an arrow $\xymatrix{ t \ar[r]^k & t'}$ in $\oo{\mathbb T}_n$ if and only if $T(t')$ is obtained from $T(t)$ by a forward mutation at $T^{(t)}_{k}$. This is called the \emph{silting pattern on $\DD$}. Then it follows from \cite{KQ:exchangeCY} that a maximal green sequence $(i_1, \ldots, i_l)$ corresponds to a path $\xymatrix{t_0 \ar[r]^{i_1} & t_1 \ar[r]^{i_2} & \cdots \ar[r]^{i_l} & t_l}$ in $\oo{\mathbb T}_n$ such that $T(t_l) \simeq H[1]$.

\section{More on the Happel-Unger poset}\label{section:HU}
	We assume in this section that $Q$ is a cluster quiver which admits a non-degenerate potential $W$ which is Jacobi-finite, that is, the Jacobian algebra $A=\JJ(Q,W)$ is finite-dimensional. These conditions are clearly satisfied when $Q$ is acyclic (with zero potential so that $\JJ(Q,0)=\k Q$) or when $(Q,W)$ is given by an unpunctured surface, see \cite{ABCP,Labardini:potentialssurfaces}.

	The Jacobian algebra $A$ is Gorenstein \cite{KR:clustertilted}, thus we know from Section \ref{section:reptheory} that the oriented exchange graph $\oo\KK_{\modd A}$ of tilting modules has $A$ as unique maximal element and $DA$ as unique minimal element.
	We note that $\oo\KK_{\modd A}$ is in general not connected, even in cases where the exchange graph of silting objects is connected: Happel and Unger have shown that for an affine acyclic quiver $Q$, the graph $\oo\KK_{\modd A}$ is connected precisely when $Q$ is not of type $\widetilde{A}_{1,s}$ with $s \geq 1$, or $\widetilde{A}_{2,2}$ with alternating orientation \cite{HU:tiltinghereditary}. 

	\begin{theorem}\label{theorem:embedHU}
		Let $Q$ be an acyclic quiver and $H = \kQ$. Then $\oo\KK_{\modd H}$ is a full convex oriented subgraph of $\oo\E(Q)$.
	\end{theorem}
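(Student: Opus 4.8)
The plan is to realise both $\oo\E(Q)$ and $\oo\KK_{\modd H}$ as Hasse quivers of subposets of a single poset, and then to check that the smaller subposet is an up-set (indeed an order interval). Let $\SS$ denote the poset of silting objects $S$ of $\DD = \DD^b(\modd H)$ with $H[1] \leq S \leq H$, ordered by the silting order, so that $H$ is its maximum and $H[1]$ its minimum. By Section \ref{section:reptheory} the Hasse quiver of $\SS$ is exactly $\oo\E(Q) = \oo\E_{\DD}(H,H[1])$; moreover, sending a tilting $H$-module to its stalk complex in degree $0$ identifies the Happel--Unger poset $\TT_H$ with a subposet of $\SS$ (the Happel--Unger order being the restriction of the silting order, and forward tilting mutation being forward silting mutation, both recalled in Section \ref{section:reptheory}), so $\oo\KK_{\modd H}$ becomes the Hasse quiver of the subposet $\TT_H \subseteq \SS$. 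Since $H$ is hereditary it is Gorenstein, so $DH$ is the unique minimal element of $\TT_H$, hence also the minimal element of $\TT_H$ for the silting order. Therefore, once we know that $\TT_H$ is an up-set in $\SS$ (call this $(\ast)$), we get $\TT_H = \{S \in \SS : S \geq DH\} = [DH, H]$, an order interval of $\SS$; and the Hasse quiver of an order interval $[a,b]$ of any poset $\PP$ is automatically a full convex oriented subgraph of the Hasse quiver of $\PP$ (it is convex since every $z$ with $a \leq z \leq b$ lies in $[a,b]$, and it is full for the same reason, a covering relation of $\PP$ with both ends in $[a,b]$ being a covering relation of $[a,b]$ and conversely). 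Applying this with $\PP = \SS$ then yields the theorem.

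Thus everything reduces to $(\ast)$. For this I would first record the shape of an arbitrary $S \in \SS$: it is a two-term complex of projective $H$-modules, so, writing $M = H^0(S)$ and $P = H^{-1}(S)$ (the latter a submodule of a projective, hence projective since $H$ is hereditary), the vanishing $\Ext^2_H(M,P) = 0$ kills the connecting map of the triangle $P[1] \to S \to M \to P[2]$, so $S \cong M \oplus P[1]$ in $\DD$. Hence $S$ is (the stalk complex of) a tilting module precisely when $P = 0$; and conversely, when $P = 0$ the silting object $S = M$ is an $H$-module, so it is rigid with $n$ pairwise non-isomorphic indecomposable summands, hence a tilting module by Bongartz's lemma. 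Now let $S \in \SS$ with $S \geq T$ for some tilting $H$-module $T$, and write $S = M \oplus P[1]$. The relation $S \geq T$ in the silting order means $\Hom_{\DD}(S, T[i]) = 0$ for all $i>0$; taking $i=1$ and using $S = M \oplus P[1]$, the direct summand $\Hom_{\DD}(P[1], T[1]) = \Hom_H(P,T)$ of $\Hom_{\DD}(S, T[1])$ vanishes. But a tilting $H$-module is sincere — the defining coresolution $0 \to H \to T^0 \to T^1 \to 0$ with $T^0, T^1 \in \add T$ exhibits every simple as a composition factor of $T$ — so $\Hom_H(P_v, T) = e_v T \neq 0$ for every vertex $v$. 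As $P$, being projective, has an indecomposable summand $P_v$, the vanishing $\Hom_H(P,T) = 0$ forces $P = 0$; thus $S$ is a tilting module. This proves $(\ast)$.

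The real work is in the first paragraph: carefully invoking the (known) dictionary recalled in Section \ref{section:reptheory} between $\oo\E(Q)$, the silting objects of $\DD^b(\modd H)$ in the interval $[H[1],H]$, two-term silting complexes, and functorially finite torsion classes in $\modd H$ (see \cite{AI:siltingmutation,KQ:exchangeCY,KY:simpleminded}), together with the compatibility of the Happel--Unger order and mutation with their silting analogues (classical tilting theory and \cite{AI:siltingmutation}); the orientation conventions must be tracked consistently, so that $[\h Q]$, $H = \kQ$ and the torsion class $\modd H$ all correspond to the maximum. Once that framework is fixed, the single genuinely new ingredient is the short sincerity argument for $(\ast)$, after which convexity, fullness, and the identification of $\oo\KK_{\modd H}$ with the Hasse quiver of the interval $[DH,H]$ are all formal. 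One could equally well phrase the argument entirely in torsion-class language: $\oo\E(Q)$ is the Hasse quiver of the poset of functorially finite torsion classes of $\modd H$, $\oo\KK_{\modd H}$ is the full subquiver on the "tilting" ones, i.e. those containing $DH$, and that condition is visibly upward closed; but the derived-category formulation above seems to keep the bookkeeping lightest.
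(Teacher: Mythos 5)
Your proof is correct and follows essentially the same route as the paper: the heart of both arguments is that a two-term silting object lying above a tilting module $T$ cannot have a summand of the form $P_v[1]$, since then $\Hom_{\DD}(P_v[1],T[1]) = \Hom_H(P_v,T)$ would vanish, contradicting the sincerity of $T$, while fullness comes from the compatibility of tilting-module mutation with silting mutation. Your repackaging of this as ``the tilting modules form an up-set, indeed the interval $[DH,H]$, in the two-term silting poset'' rather than arguing along an explicit oriented path as the paper does is only a cosmetic (if pleasant) difference.
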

	\begin{proof}
		Let $T$ and $T'$ be two tilting $H$-modules and consider a path 
		$$T \fl T^{(1)} \fl \cdots \fl T^{(l)} \fl T'$$
		in $\oo\E(Q)$. Then we have the following chain of inclusions of the left aisles of the corresponding $t$-structures in $\DD = \DD(\modd H)$:
		$$\DD^{\leq 0}_{T} \subset \DD^{\leq 0}_{T^{(1)}} \subset \cdots \subset \DD^{\leq 0}_{T^{(l)}} \subset \DD^{\leq 0}_{T'}.$$
		If there is some $1 \leq k \leq l$ such that $T^{(k)}$ is not a tilting $H$-module, then it is a silting object in $\oo\E_{\DD}(H,H[1])$ and therefore, it has a summand of the form $P_i[1]$. Thus we have $P_i[1] \in \DD^{\leq 0}_{T^{(k)}}$ and we get $P_i[1] \in \DD^{\leq 0}_{T'}$ which implies 
		$$0 = \Hom_{\DD}(P_i[1],T'[1]) = \Hom_{\DD}(P_i,T') = \Hom_{H}(P_i,T')$$
		so that $T'$ is not sincere, which is a contradiction since every tilting $H$-module is sincere. Therefore, for any $1 \leq k \leq l$, the silting object $T^{(k)}$ is a tilting $H$-module. And as it was already mentioned, the forward mutation of a tilting module $T$ in $\modd H$ coincides with the forward mutation of $T$ viewed as a silting object in $\DD$. Therefore, $\oo\KK_{\modd H}$ is a full convex oriented subgraph of $\oo\E(Q)$.
	\end{proof}

	\begin{exmp}
		Figure \ref{fig:HUA2} shows how the Happel-Unger poset embeds in the oriented exchange graph of type $A_2$. In the HU poset, the unique source is circled in green and the unique sink is circled in red.

		\begin{figure}[H]
			\begin{center}
				\begin{tikzpicture}[scale = .75]
					\begin{scope}[decoration={
								markings,
								mark=at position 0.6 with {\arrow{triangle 60}}}
								] 
						\coordinate (A) at (180+0:2);
						\coordinate (B) at (180+72:2);
						\coordinate (C) at (180+144:2);
						\coordinate (D) at (180+216:2);
						\coordinate (E) at (180+288:2);
						\fill (A) circle (.1);
						\fill (E) circle (.1);

						\fill (A) node [left] {\tiny $P_1 \oplus P_2$ \hspace{1em}};
						\fill (E) node [right] {\tiny \hspace{.5em} $P_1 \oplus I_1$};

						\draw[dashed,postaction={decorate}] (A) -- (B);
						\draw[dashed,postaction={decorate}] (B) -- (C);
						\draw[postaction={decorate}] (A) -- (E);
						\draw[dashed,postaction={decorate}] (E) -- (D);
						\draw[dashed,postaction={decorate}] (D) -- (C);

						\draw[thick,green] (A) circle (.2);
						\draw[thick,red] (E) circle (.2);

					\end{scope}
				\end{tikzpicture}
				\caption{The Happel-Unger poset, sitting in the oriented exchange graph of $1 \fl 2$.}\label{fig:HUA2}
			\end{center}
		\end{figure}
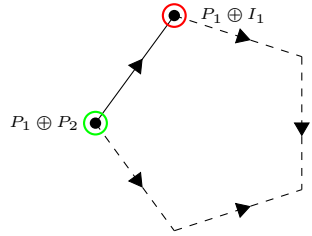
	\end{exmp}

	\begin{exmp}\label{exmp:HUvsgreenA3}
		Figures \ref{fig:HUA3lin} and \ref{fig:HUA3alt} show how the Happel-Unger posets embed in certain oriented exchange graphs of type $A_3$. In both cases, the unique source of the HU poset is circled in green and the unique sink of the HU poset is circled in red.

		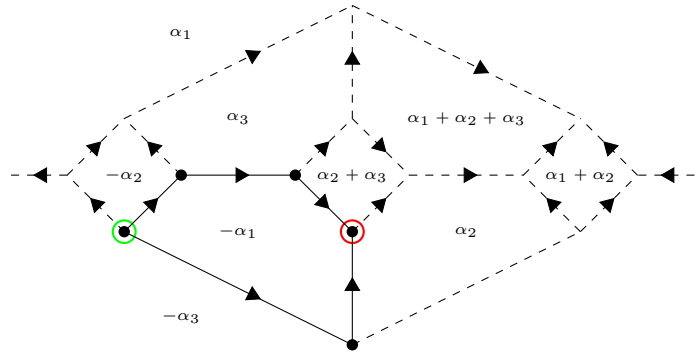
\begin{figure}[H]
			\begin{center}
				\begin{tikzpicture}[scale = .75]
				\tikzstyle{every node}=[font=\tiny]
					\begin{scope}[decoration={
								markings,
								mark=at position 0.6 with {\arrow{triangle 60}}}
								] 
						\foreach \y in {0}
						{

							\fill (3,\y-1) circle (.1);

							\fill (7,\y-1) circle (.1);


							\fill (7,\y-3) circle (.1);

							\foreach \x in {4,6}
							{
								\fill (\x,\y) circle (.1);
							}

							\draw[thick,green] (3,\y-1) circle (.2);
							\draw[thick,red] (7,\y-1) circle (.2);
				
							\draw[dashed,postaction={decorate}] (2,\y) -- (3,\y+1);
							\draw[dashed,postaction={decorate}] (4,\y) -- (3,\y+1);

							\draw[dashed,postaction={decorate}] (3,\y-1) -- (2,\y);
							\draw[postaction={decorate}] (3,\y-1) -- (4,\y);

							\draw[postaction={decorate}] (4,\y) -- (6,\y);

							\draw[dashed,postaction={decorate}] (6,\y) -- (7,\y+1);
							\draw[postaction={decorate}] (6,\y) -- (7,\y-1);
							\draw[dashed,postaction={decorate}] (7,\y+1) -- (8,\y);
							\draw[dashed,postaction={decorate}] (7,\y-1) -- (8,\y);

							\draw[dashed,postaction={decorate}] (8,\y) -- (10,\y);

							\draw[dashed,postaction={decorate}] (10,\y) -- (11,\y+1);
							\draw[dashed,postaction={decorate}] (11,\y-1) -- (10,\y);
							\draw[dashed,postaction={decorate}] (12,\y) -- (11,\y+1);
							\draw[dashed,postaction={decorate}] (11,\y-1) -- (12,\y);

							\draw[dashed,postaction={decorate}] (2,\y) -- (1,\y);
							\draw[dashed,postaction={decorate}] (13,\y) -- (12,\y);

							\draw[dashed,postaction={decorate}] (3,\y+1) -- (7,\y+3);
							\draw[dashed,postaction={decorate}] (7,\y+1) -- (7,\y+3);
							\draw[dashed,postaction={decorate}] (7,\y+3) -- (11,\y+1);

							\draw[postaction={decorate}] (3,\y-1) -- (7,\y-3);
							\draw[postaction={decorate}] (7,\y-3) -- (7,\y-1);
							\draw[dashed] (7,\y-3) -- (11,\y-1);

							\fill (3,\y) node {$-\alpha_2$};
							\fill (4,\y+2.5) node {$\alpha_1$};
							\fill (4,\y-2.5) node {$-\alpha_3$};

							\fill (5,\y+1) node {$\alpha_3$};
							\fill (5,\y-1) node {$-\alpha_1$};

							\fill (7,\y) node {$\alpha_2 + \alpha_3$};

							\fill (9,\y+1) node {$\alpha_1+\alpha_2+\alpha_3$};
							\fill (9,\y-1) node {$\alpha_2$};

							\fill (11,\y) node {$\alpha_1+\alpha_2$};

						}
					\end{scope}
				\end{tikzpicture}
				\caption{The Happel-Unger poset for the quiver $1 \fl 2 \fl 3$, sitting in the poset of maximal green sequences, labelled with denominator vectors.}\label{fig:HUA3lin}
			\end{center}
		\end{figure}

		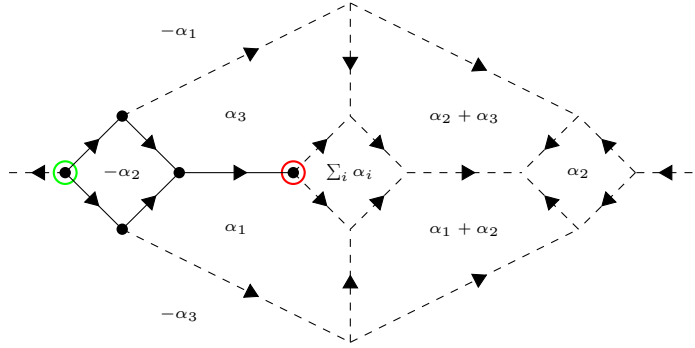
\begin{figure}[H]
			\begin{center}
				\begin{tikzpicture}[scale = .75]
				\tikzstyle{every node}=[font=\tiny]
					\begin{scope}[decoration={
								markings,
								mark=at position 0.6 with {\arrow{triangle 60}}}
								] 
						\foreach \y in {0}
						{

							\fill (3,\y+1) circle (.1);
							\fill (3,\y-1) circle (.1);




							\foreach \x in {2,4,6}
							{
								\fill (\x,\y) circle (.1);
							}

							\draw[thick,green] (2,\y) circle (.2);
							\draw[thick,red] (6,\y) circle (.2);
				
							\draw[postaction={decorate}] (2,\y) -- (3,\y+1);
							\draw[postaction={decorate}] (3,\y+1) -- (4,\y);

							\draw[postaction={decorate}] (2,\y) -- (3,\y-1);
							\draw[postaction={decorate}] (3,\y-1) -- (4,\y);

							\draw[postaction={decorate}] (4,\y) -- (6,\y);

							\draw[dashed,postaction={decorate}] (6,\y) -- (7,\y+1);
							\draw[dashed,postaction={decorate}] (6,\y) -- (7,\y-1);
							\draw[dashed,postaction={decorate}] (7,\y+1) -- (8,\y);
							\draw[dashed,postaction={decorate}] (7,\y-1) -- (8,\y);

							\draw[dashed,postaction={decorate}] (8,\y) -- (10,\y);

							\draw[dashed,postaction={decorate}] (11,\y+1) -- (10,\y);
							\draw[dashed,postaction={decorate}] (11,\y-1) -- (10,\y);
							\draw[dashed,postaction={decorate}] (12,\y) -- (11,\y+1);
							\draw[dashed,postaction={decorate}] (12,\y) -- (11,\y-1);

							\draw[dashed,postaction={decorate}] (2,\y) -- (1,\y);
							\draw[dashed,postaction={decorate}] (13,\y) -- (12,\y);

							\draw[dashed,postaction={decorate}] (3,\y+1) -- (7,\y+3);
							\draw[dashed,postaction={decorate}] (7,\y+3) -- (7,\y+1);
							\draw[dashed,postaction={decorate}] (7,\y+3) -- (11,\y+1);

							\draw[dashed,postaction={decorate}] (3,\y-1) -- (7,\y-3);
							\draw[dashed,postaction={decorate}] (7,\y-3) -- (7,\y-1);
							\draw[dashed,postaction={decorate}] (7,\y-3) -- (11,\y-1);

							\fill (3,\y) node {$-\alpha_2$};
							\fill (4,\y+2.5) node {$-\alpha_1$};
							\fill (4,\y-2.5) node {$-\alpha_3$};

							\fill (5,\y+1) node {$\alpha_3$};
							\fill (5,\y-1) node {$\alpha_1$};

							\fill (7,\y) node {$\sum_i \alpha_i$};

							\fill (9,\y+1) node {$\alpha_2+\alpha_3$};
							\fill (9,\y-1) node {$\alpha_1+\alpha_2$};

							\fill (11,\y) node {$\alpha_2$};

						}
					\end{scope}
				\end{tikzpicture}
				\caption{Happel-Unger poset, sitting in the oriented exchange graph of $1 \lf 2 \fl 3$, labelled with denominator vectors.}\label{fig:HUA3alt}
			\end{center}
		\end{figure}

	\end{exmp}

	\begin{rmq}
		Example \ref{exmp:HUvsgreenA3} shows a phenomenon which was already observed in \cite{HU:tiltinghereditary}, namely that the Happel-Unger poset is not stable under derived equivalences. It also shows that the oriented exchange graph is \emph{not} stable under derived equivalences either. Indeed, in the linear case (Figure \ref{fig:HUA3lin}) there are 9 maximal green sequences whereas there are 10 in the alternating case (Figure \ref{fig:HUA3alt}).
	\end{rmq}

	\begin{lem}\label{lem:longerpaths}
		Let $Q$ be an acyclic quiver and $H = \kQ$. Let $(i_1, \ldots, i_n)$ be an admissible numbering of $Q_0$ by sinks. Assume that there is a path from $H$ to $DH$ in $\oo\KK_{\modd H}$ and let $(v_1, \ldots, v_l)$ be the corresponding green sequence for $Q$. Then $(v_1, \ldots, v_l,i_1,\ldots, i_n) \in \green_{l+n}(Q)$.
	\end{lem}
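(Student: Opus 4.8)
The plan is to use the embedding of Theorem~\ref{theorem:embedHU} to reduce the statement to a computation starting from the ice quiver that represents $DH$, and then to continue the green sequence by an argument modelled on Lemma~\ref{lem:lminacyclic}. By Theorem~\ref{theorem:embedHU} the graph $\oo\KK_{\modd H}$ is a full convex oriented subgraph of $\oo\E(Q)$ whose source is $[\h Q]$ (corresponding to $H=\kQ$). Hence the hypothesised path $H\fl\cdots\fl DH$ in $\oo\KK_{\modd H}$ is an oriented path in $\oo\E(Q)$ issued from the unique source, so $(v_1,\dots,v_l)$ is a green sequence for $Q$, and
$$R:=\mu_{v_l}\circ\cdots\circ\mu_{v_1}(\h Q)$$
is the ice quiver in $\Mut(\h Q)$ whose associated silting object in $\oo\E_{\DD}(\kQ,\kQ[1])$, with $\DD=\DD^b(\modd\kQ)$, is the tilting module $DH$. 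It therefore suffices to show that $\mu_{i_1},\dots,\mu_{i_n}$, applied successively to $R$, form a green sequence ending at $\v Q$.

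The second step is to describe $R$. Since every indecomposable summand of $DH$ is an honest $\kQ$-module, the non-frozen part of $R$ is the Gabriel quiver of $\End_{\kQ}(DH)\cong(\k Q)^{\op}$; in particular it is isomorphic to $Q^{\op}$ and is acyclic. Moreover the heart $\HH_{DH}$ of the $t$-structure of $DH$ is $\modd\End_{\kQ}(DH)$, and the dimension vectors taken in $\modd\kQ$ of its simple objects are exactly the vectors $\pm\c_k(R)$, with sign $+$ at a green vertex $k$ and sign $-$ at a red one. Reading off this data, I would verify that the green vertices of $R$ are precisely those singled out, in order, by the admissible numbering $(i_1,\dots,i_n)$, and that each $i_k$ is, at the stage where it is mutated, a green vertex which is moreover extremal (a source, respectively a sink) of the current acyclic non-frozen quiver. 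Representation-theoretically, $\mu_{i_1},\dots,\mu_{i_n}$ on $R$ realises the passage from the tilted heart $\HH_{DH}$ down to the shifted standard heart $(\modd\kQ)[1]$, which is the behaviour governed by Lemma~\ref{lem:lminacyclic} applied to $Q^{\op}$, up to the appropriate re-indexing of vertices and interchange of the roles of sources and sinks.

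With $R$ understood, the continuation runs by induction on $k$: one shows that $i_k$ is green in $R^{(k-1)}:=\mu_{i_{k-1}}\circ\cdots\circ\mu_{i_1}(R)$ and computes $\c(R^{(k)})$, much as in the proof of Lemma~\ref{lem:lminacyclic}; the one new feature is that a red vertex of $R^{(k-1)}$ may turn green in $R^{(k)}$ (as already happens in the examples), so one follows the entire, fluctuating set of green vertices and checks that it becomes empty after exactly $n$ steps. Once $R^{(n)}:=\mu_{i_n}\circ\cdots\circ\mu_{i_1}(R)$ has only red non-frozen vertices, Proposition~\ref{prop:uniquered} gives $R^{(n)}\simeq\v Q$, and concatenation shows that $(v_1,\dots,v_l,i_1,\dots,i_n)$ is a green sequence taking $\h Q$ to $\v Q$, of length $l+n$, hence an element of $\green_{l+n}(Q)$.

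The main obstacle is the second step together with this induction: pinning down the $\c$-matrix of $R$ — which, unlike the diagonal $\c$-matrices occurring in Lemma~\ref{lem:lminacyclic}, need not be diagonal — and controlling the green set through the $n$ mutations, keeping careful track of which of the sources/sinks of $Q$, respectively of $Q^{\op}$, the numbering $(i_1,\dots,i_n)$ refers to. In particular, one cannot simply transport Lemma~\ref{lem:lminacyclic} for $Q^{\op}$ across the derived equivalence attached to the tilting module $D\kQ$, because that equivalence carries $\kQ[1]$ to $DH[1]$ rather than to $\kQ[1]$; the argument must stay inside $\DD^b(\modd\kQ)$, or else be carried out combinatorially on $R$.
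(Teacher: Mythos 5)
Your first half is exactly the paper's: Theorem \ref{theorem:embedHU} transports the hypothesised path from $H$ to $DH$ into $\oo\E(Q)$, so $(v_1,\dots,v_l)$ is a green sequence ending at the vertex $[R]$ corresponding to $DH$, and it remains to produce a green continuation of length $n$ from $[R]$ to $[\v Q]$. The problem is that this second half, which is the actual content of the lemma, is only announced, not proved: the steps ``I would verify that the green vertices of $R$ are \dots'', ``one follows the entire, fluctuating set of green vertices and checks that it becomes empty after exactly $n$ steps'' are precisely what has to be established, and you yourself flag them as the main obstacle without supplying any mechanism. This is not a routine rerun of Lemma \ref{lem:lminacyclic}: there the $\c$-matrix stays diagonal up to sign and the green set shrinks by exactly one at each mutation, whereas along the tail from $DH$ to $H[1]$ the green set is genuinely non-monotone. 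Already for $Q\colon 1\fl 2$ the vertex of $\oo\E(Q)$ corresponding to $DH$ has exactly one green and one red non-frozen vertex, and the red one only turns green after the first mutation of the tail; in particular the assertion that ``the green vertices of $R$ are precisely those singled out, in order, by the admissible numbering'' is not correct under its natural reading, and nothing in the proposal pins down $\c(R)$ or controls the green set through the $n$ mutations. As it stands, the proof has a hole exactly where the new argument is needed.

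For comparison, the paper's proof avoids any $\c$-matrix bookkeeping by working with silting objects in $\DD=\DD^b(\modd H)$: since the Gabriel quiver of $\End_H(DH)$ is $Q^{\op}$ and $(i_1,\dots,i_n)$ is an admissible numbering by sources for $Q^{\op}$, successive APR-tilts produce explicit silting objects $T^{(k)}$, direct sums of injectives $I_{i_j}$ and shifted projectives $P_{i_j}[1]$, with strictly increasing left aisles
$\DD^{\leq 0}_{DH}\subset\DD^{\leq 0}_{T^{(1)}}\subset\cdots\subset\DD^{\leq 0}_{H[1]}$;
each inclusion is a forward mutation, hence a green move, so one gets the length-$n$ oriented path from $DH$ to $H[1]$ in $\oo\E(Q)$ directly and concatenates. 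If you wish to keep your combinatorial route instead, you would have to compute $\c(R)$ explicitly (for instance via the classes of the simples of the tilted heart $\HH_{DH}$) and then prove by induction that $i_k$ is green in $R^{(k-1)}$ for each $k$ and that $R^{(n)}$ has only red vertices (then indeed Proposition \ref{prop:uniquered} finishes); that computation is the substance of the lemma and is missing from the proposal.
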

	\begin{proof}
		Since $DH$ is a tilting $H$-module, it is in particular a tilting object in $\DD(\modd H)$ and therefore a silting object in $\DD = \DD^b(\modd H)$. Let $(i_1, \ldots, i_n)$ be an admissible numbering of $Q_0$ by sinks. The endomorphism algebra of $DH$ has Gabriel quiver $Q^{\op}$ and $(i_1, \ldots, i_n)$ is an admissible sequence of sources for $Q^{\op}$. Therefore, considering successive APR-tilts at sources (see \cite{APR}), we obtain a sequence of tilting objects in $\DD$: 
		$$DH \xrightarrow{\mu_{i_1}} T^{(1)} \xrightarrow{\mu_{i_2}} \cdots \xrightarrow{\mu_{i_{n-1}}} T^{(n-1)} \xrightarrow{\mu_{i_n}} H[1]$$
		where 
		$$T^{(k)} = \left( \bigoplus_{j < k} I_{i_j} \right) \oplus \left( \bigoplus_{l \leq k} \tau^{-1}I_{i_j} \right) = \left( \bigoplus_{j < k} I_{i_l} \right) \oplus \left( \bigoplus_{l \geq k} P_{i_l}[1] \right)$$
		for any $1 \leq k \leq n$. In particular, we have proper inclusions of left aisles
		$$\DD^{\leq 0}_{DH} \subset \DD^{\leq 0}_{T^{(1)}} \subset \DD^{\leq 0}_{T^{(2)}} \subset \cdots \subset \DD^{\leq 0}_{T^{(n-1)}} \subset \DD^{\leq 0}_{H[1]}$$
		so that we obtain a path of length $n$ from $DH$ to $H[1]$ in $\E(Q)$. By Theorem \ref{theorem:embedHU}, a path of length $l$ from $H$ to $DH$ in $\oo\KK_{\modd H}$ gives rise to a path of length $l$ from $H$ to $DH$ in $\oo\E(Q)$, composing this path with the above path from $DH$ to $H[1]$ gives a path from $H$ to $H[1]$ of length $n+l$ in $\oo\E(Q)$, and therefore an element in $\green_{l+n}(Q)$. Figure \ref{fig:longerpaths} illustrates the proof.

		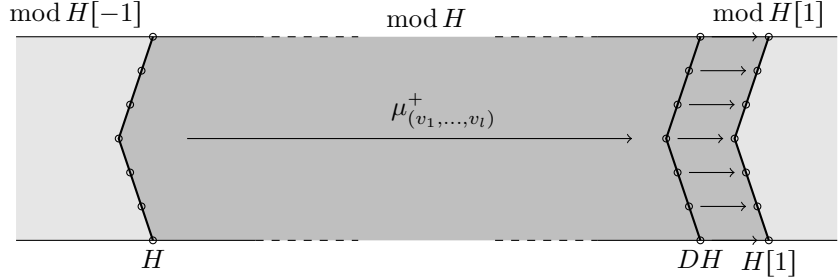
\begin{figure}[htb]
			\begin{center}
				\begin{tikzpicture}[scale = .45]
					\fill[gray!20] (2,-3) -- (1,0) -- (2,3) -- (-2,3) -- (-2,-3) -- cycle;

					\fill[gray!50] (2,-3) -- (1,0) -- (2,3) -- (20,3) -- (19,0) -- (20,-3) -- cycle;

					\fill[gray!20] (22,-3) -- (22,3) -- (20,3) -- (19,0) -- (20,-3) -- cycle;

					\draw (-2,3) -- (5,3);
					\draw[dashed] (5,3) -- (8,3);
					\draw[dashed] (12,3) -- (15,3);
					\draw (15,3) -- (22,3);

					\draw[thick] (2,3) -- (1,0) -- (2,-3);
					\draw (2,3) circle (.1);
					\draw (2-1/3,2) circle (.1);
					\draw (2-2/3,1) circle (.1);
					\draw (1,0) circle (.1);
					\draw (2,-3) circle (.1);
					\draw (2-1/3,-2) circle (.1);
					\draw (2-2/3,-1) circle (.1);

					\draw (18,3) circle (.1);
					\draw (18-1/3,2) circle (.1);
					\draw (18-2/3,1) circle (.1);
					\draw (17,0) circle (.1);
					\draw (18,-3) circle (.1);
					\draw (18-1/3,-2) circle (.1);
					\draw (18-2/3,-1) circle (.1);

					\draw (20,3) circle (.1);
					\draw (20-1/3,2) circle (.1);
					\draw (20-2/3,1) circle (.1);
					\draw (19,0) circle (.1);
					\draw (20,-3) circle (.1);
					\draw (20-1/3,-2) circle (.1);
					\draw (20-2/3,-1) circle (.1);

					\draw[thick] (18,3) -- (17,0) -- (18,-3);

					\draw[thick] (20,3) -- (19,0) -- (20,-3);

					\draw (-2,-3) -- (5,-3);
					\draw[dashed] (5,-3) -- (8,-3);
					\draw[dashed] (12,-3) -- (15,-3);
					\draw (15,-3) -- (22,-3);

					\draw[->] (3,0) -- (16,0);
					\fill (10.5,0) node [above] {$\mu_{(v_1, \ldots, v_l)}^+$};

					\draw[->] (18+1/3,3) -- (20-1/3,3);
					\draw[->] (18,2) -- (20-2/3,2);
					\draw[->] (18-1/3,1) -- (20-1,1);
					\draw[->] (18-2/3,0) -- (20-4/3,0);
					\draw[->] (18-1/3,-1) -- (20-1,-1);
					\draw[->] (18,-2) -- (20-2/3,-2);
					\draw[->] (18+1/3,-3) -- (20-1/3,-3);

					\fill (2,-3) node [below] {$H$};

					\fill (18,-3) node [below] {$DH$};

					\fill (20,-3) node [below] {$H[1]$};

					\fill (-.25,3) node [above] {$\modd H[-1]$};

					\fill (10,3) node [above] {$\modd H$};

					\fill (20,3) node [above] {$\modd H[1]$};
				\end{tikzpicture}
			\end{center}
			\caption{Extending a path from $H$ to $DH$ to a maximal green sequence.}\label{fig:longerpaths}
		\end{figure}
	\end{proof}

	The statement of Lemma \ref{lem:longerpaths} fails if $Q$ is not acyclic: in case $Q$ is the 3-cycle with non-degenerate potential that 3-cycle, the Jacobian algebra $A$ is self-injective and so the poset $\oo\KK_{\modd A}$ consists only of one point. The minimal length of a maximal green sequence is 4, thus the statement of Lemma \ref{lem:longerpaths} does not hold. 

	\begin{rmq}
		Lemma \ref{lem:longerpaths} provides a criterion for the non-existence of paths from $H$ to $DH$ in $\oo\KK_{\modd H}$. For instance, consider the quiver
		$$\xymatrix@=1em{
			Q: 1 \ar[r] & 2 \ar[r] & 3 \ar@<-2pt>[r]\ar@<+2pt>[r] & 4.
		}
		$$
		Since $\k Q$ is wild hereditary, it has no projective-injective modules. Therefore, a path from $\k Q$ to $D\k Q$ in $\oo\KK_{\modd \k Q}$ must have a length at least 4. However, as we saw in Example \ref{exmp:wild1}, we have $\ell^0_{\max}(Q_1) = 7$. Therefore, if Conjecture \ref{conj:intervals} holds, then $\green_{l}(Q)$ is empty for $l \geq 8$ and there is no path from $\k Q$ to $D\k Q$ in $\oo\KK_{\modd \k Q}$.
	\end{rmq}

\section{Proofs of Section \ref{section:greenseq}}\label{section:proofsgreenseq}
	\subsection{Proof of Proposition \ref{prop:uniquered}}
		\begin{proof}
			Let $W$ be a generic potential on $Q$ and let $\Gamma$ be the Ginzburg dg-algebra associated with the quiver with potential $(Q, W)$. The category $\DDfd\Gamma$ is endowed with a natural $t$-structure and we denote by $\HH$ the corresponding heart with simples $S_i$, $i \in Q_0$. Let $\mathbb T_{|Q_0|}$ denote the $|Q_0|$-regular tree and consider the pattern of tilts $t \mapsto \HH(t)$ with $\HH(T_0) = \HH$, see Section \ref{ssection:patterns} or \cite[\S 7.7]{Keller:derivedcluster}. 

			Since $R \in \Mut(\h Q)$, there is some vertex $t$ in $\mathbb T_{|Q_0|}$ such that $R = Q(t)$ and since all the non-frozen vertices in $R$ are green, it means that all the simples in $\HH(t)$ are in $\HH$. Therefore, there exists a permutation $\pi \in \mathfrak S_{Q_0}$ such that $S_i(t) \simeq S_{\pi(i)}$ for any $i \in Q_0$ and it follows from \cite[Corollary 7.11]{Keller:derivedcluster} that $\pi$ induces the wanted isomorphism of ice quivers.

			Similarly, if all the non-frozen vertices in $R=Q(t)$ are red, it means that all the simples in $\HH(t)$ are in $\Sigma\HH$. Therefore, there exists a permutation $\pi \in \mathfrak S_{Q_0}$ such that $S_i(t) \simeq \Sigma S_{\pi(i)}$ for any $i \in Q_0$ and it follows from \cite[Corollary 7.11]{Keller:derivedcluster} that $\pi$ induces the wanted isomorphism of ice quivers.
		\end{proof}

	\subsection{Proof of Proposition \ref{prop:doubletriangle}}
		\begin{proof}
			We know from Plamondon's thesis \cite[Example 4.3]{Plamondon:generic} that there exists a (Jacobi-finite) non-degenerate potential $W$ on $Q$ such that there is no sequence of mutations in the cluster category $\CC_{Q,W}$ joining the cluster-tilting object $\Gamma_{Q,W}$ to the cluster-tilting object $\Sigma \Gamma_{Q,W}$. Therefore, $\Gamma_{Q,W}$ and $\Sigma \Gamma_{Q,W}$ are in two different connected components of the mutation graph of cluster-tilting objects in $\CC_{Q,W}$. In particular, if $\HH$ denotes the canonical heart in $\DDfd\Gamma_{Q,W}$, there is no sequence of forward mutations from $\HH$ to $\Sigma \HH$ and therefore, there is no path from $[\h Q]$ to $[\v Q]$ in $\oo\E(Q)$. In other words, $\green(Q) = \emptyset$.
		\end{proof}

	\subsection{On Jacobi-infinite quivers with potential}
		In this short section we give a criterion for the non-existence of maximal green sequences. We recall that a quiver with potential is called Jacobi-infinite if the corresponding (completed) Jacobian algebra is infinite dimensional over $\k$.

		\begin{prop}\label{prop:nogreeninfinite}
			Let $(Q,W)$ be a Jacobi-infinite quiver with potential. Assume that it is non-degenerate. Then $\green(Q) = \emptyset$.
		\end{prop}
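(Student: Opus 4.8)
The plan is to argue by contradiction and translate the statement ``$\green(Q)\neq\emptyset$'' into the language of cluster categories, then invoke the analysis of such categories when the potential is Jacobi-infinite; this is the general version of the argument used for the double triangle in Proposition~\ref{prop:doubletriangle}, which is the special case where $(Q,W)$ is that quiver with its standard (Jacobi-infinite) potential.

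First I would set up the dictionary. Suppose $\green(Q)\neq\emptyset$. By Proposition~\ref{prop:bijmaxgreenpaths} there is an oriented path in $\oo\E(Q)$ from the source $[\h Q]$ to the sink $[\v Q]$. Under the identification recalled in Section~\ref{section:reptheory} (following \cite{Keller:dilogarithm,Keller:derivedcluster}), $\oo\E(Q)$ is an orientation of the connected component of the mutation graph of cluster-tilting objects in $\CC_{Q,W}=\per\Gamma_{Q,W}/\DDfd\Gamma_{Q,W}$ containing the image of $\Gamma_{Q,W}$, the source $[\h Q]$ corresponds to $\Gamma_{Q,W}$, and by Proposition~\ref{prop:uniquered} the sink $[\v Q]$ corresponds to $\Sigma\Gamma_{Q,W}$; equivalently, in $\DDfd\Gamma_{Q,W}$ a maximal green sequence is a finite sequence of forward tilts carrying the canonical length heart $\HH$ to its shift $\Sigma\HH$. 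So the path yields a finite sequence of mutations of cluster-tilting objects joining $\Gamma_{Q,W}$ to $\Sigma\Gamma_{Q,W}$; in particular these objects lie in the same connected component of that mutation graph, and dually $\HH$ and $\Sigma\HH$ are connected by finitely many forward tilts. Along this chain, since $(Q,W)$ is non-degenerate, each intermediate cluster-tilting object has endomorphism algebra the Jacobian algebra of a non-degenerate quiver with potential mutation-equivalent to $(Q,W)$, and since Jacobi-finiteness is a mutation invariant each of these is again Jacobi-infinite; this is routine bookkeeping with the mutation theory of quivers with potential and the derived equivalences of \cite{Keller:derivedcluster}.

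The core of the argument, and the step I expect to be the main obstacle, is to contradict Jacobi-infiniteness. The key input is that for a non-degenerate \emph{Jacobi-infinite} quiver with potential, $\Gamma_{Q,W}$ and $\Sigma\Gamma_{Q,W}$ cannot be joined by a finite sequence of mutations of cluster-tilting objects in $\CC_{Q,W}$ — equivalently, $\HH$ cannot be carried to $\Sigma\HH$ by finitely many forward tilts. For the double triangle this is precisely Plamondon's computation \cite{Plamondon:generic} exploited in Proposition~\ref{prop:doubletriangle}; in general it rests on the same circle of ideas on cluster categories with possibly infinite-dimensional morphism spaces, the heuristic being that such a finite chain of forward tilts would ``sweep out'' $\DDfd\Gamma_{Q,W}$ in finitely many bounded steps and make the length heart $\HH\simeq\modd\JJ(Q,W)$ cofinal in the poset of forward tilts after only finitely many moves, which is incompatible with $\dim_\k\JJ(Q,W)=\infty$; combined with Amiot's theorem \cite{Amiot:clustercat}, by which Hom-finiteness of $\CC_{Q,W}$ forces $(Q,W)$ to be Jacobi-finite, and with Plamondon's description of the reachable cluster-tilting objects, this gives $\dim_\k\JJ(Q,W)<\infty$, the desired contradiction. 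Everything outside this implication is the dictionary of Section~\ref{section:reptheory} and standard mutation theory; the delicate point is exactly the passage from ``a finite forward chain $\HH$ to $\Sigma\HH$ exists'' to ``$(Q,W)$ is Jacobi-finite'', and granting it the proposition follows at once, with Proposition~\ref{prop:doubletriangle} recovered as a special case.
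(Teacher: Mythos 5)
Your setup is exactly the paper's first step: a maximal green sequence gives a path from $[\h Q]$ to $[\v Q]$ in $\oo\E(Q)$, hence a finite sequence of forward tilts carrying the canonical heart $\HH$ of $\DDfd\Gamma_{Q,W}$ to $\Sigma\HH$, hence a finite sequence of mutations of cluster-tilting objects joining $\Gamma_{Q,W}$ to $\Sigma\Gamma_{Q,W}$ in $\CC_{Q,W}$. The problem is the core step: you declare as a ``key input'' that for a non-degenerate Jacobi-infinite $(Q,W)$ no such finite mutation sequence can exist --- but that is precisely the statement to be proved, and the justification you sketch does not establish it. The ``sweeping out / cofinality'' picture is only a heuristic, and the appeal to Amiot's theorem is misplaced: since $\End_{\CC_{Q,W}}(\Gamma_{Q,W})\simeq\JJ(Q,W)$, Hom-finiteness of $\CC_{Q,W}$ trivially gives Jacobi-finiteness, but nothing in your argument shows that the existence of a maximal green sequence forces $\CC_{Q,W}$ to be Hom-finite; in the Jacobi-infinite case the category is Hom-infinite from the outset, and Plamondon's framework \cite{Plamondon:ClusterAlgebras} exists precisely to make sense of mutation there. (Also, Proposition \ref{prop:doubletriangle} is not a special case of this proposition in the paper: its proof uses a Jacobi-\emph{finite} non-degenerate potential on that quiver together with Plamondon's explicit non-reachability computation \cite{Plamondon:generic}.)

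The missing mechanism, which is the actual content of the paper's proof, is the passage to decorated representations. By \cite{Plamondon:ClusterAlgebras}, a finite sequence of mutations of reachable cluster-tilting objects from $\Sigma\Gamma$ to $\Gamma$ induces a sequence of mutations, in the sense of \cite{DWZ:potentials2}, of decorated representations of $(Q,W)$ and of its mutated quivers with potential, starting from the decorated representation corresponding to $\Hom_{\CC}(\Gamma,\Sigma\Gamma)=0$ and ending at the one corresponding to $\Hom_{\CC}(\Gamma,\Gamma)\simeq\End_{\CC}(\Gamma)\simeq\JJ(Q,W)$. Decorated representations are finite-dimensional by definition and DWZ mutation preserves this, so the endpoint would force $\dim_\k\JJ(Q,W)<\infty$, contradicting Jacobi-infiniteness. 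Without this finiteness argument (or an equivalent substitute), your proof reduces to asserting the proposition; with it, the rest of your dictionary goes through as you describe.
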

		\begin{proof}
			Let $\Gamma$ be the Ginzburg dg-algebra corresponding to $(Q,W)$. Let $\HH$ denote the canonical heart in $\DDfd\Gamma$. We claim that $\Sigma \HH$ is not reachable from $\HH$ by iterated forward mutations of hearts in $\DDfd\Gamma$. Indeed, if so, we would obtain a sequence of mutations from $\Gamma$ to $\Sigma\Gamma$ in the generalised cluster category $\CC$ associated with $(Q,W)$, see \cite{Keller:derivedcluster,Plamondon:ClusterAlgebras}. Therefore, $\Gamma$ and $\Sigma\Gamma$ are in the same connected component of the cluster-tilting graph of $\CC$. Fix thus a sequence $\i=(i_1, \ldots, i_l)$ such that $\Gamma = \mu_\i(\Sigma\Gamma)$. Then it follows from \cite{Plamondon:ClusterAlgebras}, that this gives a sequence of mutations of decorated representation in the sense of \cite{DWZ:potentials2} from the decorated representation of $(Q,W)$ corresponding to $\Hom_{\CC}(\Gamma,\Sigma \Gamma)=0$ to the decorated representation of $\mu_{\i}(Q,W)$ of $\Hom_{\CC}(\Gamma,\Gamma) \simeq \End_{\CC}(\Gamma) \simeq \JJ(Q,W)$. However, $\JJ(Q,W)$ is infinite dimensional by hypothesis so that it is not a decorated representation in the sense of \cite{DWZ:potentials2}, a contradiction. Thus, there is no sequence of forward mutations from $\HH$ to $\Sigma \HH$ and therefore, there is no path from $[\h Q]$ to $[\v Q]$ in $\oo\E(Q)$. Hence, $\green(Q) = \emptyset$.
		\end{proof}

		\begin{exmp}\label{exmp:VdB}
			Consider the McKay quiver
			$$\xymatrix@=1em{
							&&& 0 \ar[lldd] \ar@<+2pt>[rdddd] \ar@<-2pt>[rdddd]\\ \\
				Q:	& 1 \ar[rdd] \ar@<+2pt>[rrrr]\ar@<-2pt>[rrrr] &&&& 4. \ar[lluu] \ar@<+2pt>[ddlll] \ar@<-2pt>[ddlll]\\ \\
						&& 2 \ar[rr] \ar@<+2pt>[ruuuu]\ar@<-2pt>[ruuuu] && 3 \ar[ruu] \ar@<+2pt>[llluu]\ar@<-2pt>[llluu]
			}
			$$
			Then the main theorem of \cite{VVdB:nondegenerateqps} asserts that $Q$ admits a non-degenerate potential such that the corresponding Jacobian algebra is infinite dimensional. Therefore, it follows from Proposition \ref{prop:nogreeninfinite} that $Q$ has no maximal green sequences.
		\end{exmp}

\section{Proofs of Section \ref{section:finite}}\label{section:proofsfinite}

	\subsection{Proof of Theorem \ref{theorem:Dynkinminmax}}
		\begin{proof}
			A Dynkin quiver is acyclic so that the first point follows from Lemma \ref{lem:lminacyclic}. 

			For the second point, consider the Weyl group $W$ associated with $Q$ with simple reflections $s_i$, with $i \in Q_0$. Let $w_0$ be the longest element in $W$ and fix a reduced expression $w_0 = s_{i_1}\cdots s_{i_r}$ so that $r = |\Phi_+(Q)|$. Then it is well-known that one can choose $w_0$ in such a way that $\i = (i_1, \ldots, i_r)$ is an admissible sequence of sinks in $Q$. For any $1 \leq k \leq r$, we set 
			$$T^{(k)} = \mu_{i_k}^+ \circ \cdots \circ \mu_{i_1}^+(H)$$
			with the convention that $T^{(0)} = H$.

			Since $\i$ is an admissible sequence of sinks, for any $1 \leq k \leq r$, the vertex $i_k$ is a sink in the quiver of the endomorphism ring of $T^{(k-1)}$ so that $T^{(k)}$ is obtained from $T^{(k-1)}$ by a simple APR-tilt (see \cite{APR}). Therefore, the left aisles $\DD^{\leq 0}_{T^{(k-1)}}$ and $\DD^{\leq 0}_{T^{(k)}}$ differ by a single indecomposable object, namely $T_{i_k}^{(k-1)}$. Moreover, it is well-known that $T^{(r)} \simeq H[1]$. Therefore, we obtained a sequence of forward mutations
			$$H \xrightarrow{\mu_{i_1}^+} T^{(1)} \xrightarrow{\mu_{i_2}^+} \cdots \xrightarrow{\mu_{i_r}^+} T^{(r)} \simeq H[1]$$
			which is the longest possible. Thus $\i$ is a maximal green sequence of the longest possible length and we have $\ell_{\max}(Q) = r = |\Phi_+(Q)|$.
		\end{proof}

		Note that another proof of this result can be found in \cite[Proposition 6.3]{Qiu:Dynkin}.

\section{Proofs of Section \ref{section:infinite}}\label{section:proofsinfinite}
	\subsection{Proof of Lemma \ref{lem:2vertices}}
		\begin{proof}
			The first point follows from Theorem \ref{theorem:Dynkinminmax}. We now prove the second point. Let $(i_1,i_2)$ be an admissible numbering of $Q_0$ by sources. Then it was proved in Lemma \ref{lem:lminacyclic} that $(i_1,i_2)$ is a maximal green sequence for $Q$. If there exists another maximal green sequence, then it is necessarily obtained by iterated mutations at sinks of the form $i_2i_1i_2i_1\cdots$. Let $H = \kQ$, $T^{(0)} = H$ and for any $k \geq 1$, let 
			$$T^{(k)} = \left\{\begin{array}{ll}
				\mu^+_{i_2}(T^{(k-1)}) & \text{ if $k$ is odd,}\\
				\mu^+_{i_1}(T^{(k-1)}) & \text{ if $k$ is even.}
			\end{array}\right.$$
			Then, as in the proof of Theorem \ref{theorem:Dynkinminmax}, the left aisles $\DD^{\leq 0}_{T^{(k-1)}}$ and $\DD^{\leq 0}_{T^{(k)}}$ differ by a single indecomposable object, namely $T_{i_k}^{(k-1)}$. However, the left aisle $\DD^{\leq 0}_{H[1]}$ contains infinitely many more objects than the left aisle $\DD^{\leq 0}_H$ so that $T^{(k)} \not \simeq H[1]$ for any $k \geq 1$. Therefore, there is no maximal green sequence beginning with $i_2$ and thus $\green(Q) = \ens{(i_1,i_2)}$.
		\end{proof}

	\subsection{Proof of Theorem \ref{theorem:affine}}
		Let $Q$ be an affine quiver and let $H = \kQ$. The aim of this subsection is to prove that $\green(Q)$ is a finite set. In fact, there are infinitely many green sequences starting at the silting object $H$, and we have to show that only finitely many of them can terminate at $H[1]$. To do that, we first recall some results from representation theory.

		We let $S_1, \ldots, S_n$ denote the simple $H$-modules and for any $1 \leq i \leq n$, we denote by $P_i$ the projective cover of $S_i$ and by $I_i$ its injective hull.

		As usual, we let $\DD$ be the bounded derived category of $\modd H$ with shift functor $[1]$. We denote by $\Gamma(\DD)$ the Auslander-Reiten quiver of $\DD$, by $\PP$ the connected component containing the projective $H$-modules and by $\II$ the connected component of $\Gamma(\DD)$ containing the injective $H$-modules. 
		The indecomposable $H$-modules which are neither in $\PP$ nor in $\II$ are referred to as regular modules.

		We start with a general lemma:
		\begin{lem}\label{lem:tausincere}
			Let $H$ be a representation-infinite connected hereditary algebra. Then there exists $N \geq 0$ such that for any $k \geq N$, for any projective $H$-module $P$ and for any injective $H$-module $I$, the $H$-modules $\tau^{-k} P$ and $\tau^{k} I$ are sincere.
		\end{lem}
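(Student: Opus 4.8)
The plan is to use the structure theory of the Auslander--Reiten quiver of a representation-infinite connected hereditary algebra $H$. Recall that $\Gamma(\modd H)$ has a \emph{preprojective component} $\PP$ containing all the indecomposable projectives, a \emph{preinjective component} $\II$ containing all the indecomposable injectives, and the remaining indecomposables are \emph{regular}. Since $H$ is connected and representation-infinite, each indecomposable projective $P_i$ lies in $\PP$ and the orbits $\{\tau^{-k}P_i : k \geq 0\}$ eventually become sincere; dually for the $\tau^k I_i$ in $\II$. The point is to make ``eventually'' uniform in $i$ and to upgrade ``eventually sincere'' to ``sincere for all $k \geq N$''.

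First I would recall the dimension-vector asymptotics: writing $\langle -,- \rangle$ for the Euler form of $H$ and $c$ for the Coxeter transformation on $K_0(H) \cong \Z^n$, one has $\underline{\dim}\,\tau^{-k}P_i = c^{-k}\underline{\dim}\,P_i$ as long as $\tau^{-k}P_i \neq 0$, which holds for all $k$ since $P_i$ is preprojective in a representation-infinite algebra. Since $H$ is not of finite representation type, the spectral radius of $c$ is $\geq 1$, and a standard argument (using that the dimension vectors of preprojectives are exactly the $c^{-k}\underline{\dim}\,P_i$ with nonnegative entries, and that they are pairwise distinct, hence unbounded) shows that all entries of $c^{-k}\underline{\dim}\,P_i$ tend to $+\infty$ as $k \to \infty$. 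In particular there is $N_i$ with every entry of $\underline{\dim}\,\tau^{-k}P_i$ strictly positive for all $k \geq N_i$, i.e.\ $\tau^{-k}P_i$ is sincere for $k \geq N_i$. The crucial monotonicity point --- that once sincere, it stays sincere --- follows because consecutive preprojective dimension vectors along a $\tau^{-1}$-orbit are related by irreducible maps and, more robustly, because the entrywise inequality $c^{-(k+1)}\underline{\dim}\,P_i \geq c^{-k}\underline{\dim}\,P_i$ holds componentwise for $k$ large (each coordinate sequence is eventually monotone increasing, being a nonnegative integer sequence tending to infinity that arises from a linear recursion with nonnegative combinatorial meaning). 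Taking $N = \max_i N_i$ over the finitely many projectives $P_1,\dots,P_n$, every indecomposable projective $P$ has $\tau^{-k}P$ sincere for all $k \geq N$; since any projective is a direct sum of the $P_i$, a direct sum of sincere modules is sincere, so $\tau^{-k}P$ is sincere for every projective $P$ and every $k \geq N$. The statement for $\tau^k I$ follows by the dual argument (apply the preceding to $H^{\op}$, using the duality $D\colon \modd H \to \modd H^{\op}$, which exchanges projectives with injectives and $\tau$ with $\tau^{-1}$), enlarging $N$ if necessary.

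The main obstacle is the monotonicity claim: merely knowing that $\tau^{-k}P$ is \emph{eventually} sincere does not a priori prevent some coordinate from dropping back to $0$ at a later step. I would handle this by working directly with the Coxeter transformation and invoking the known description of the preprojective component for a representation-infinite hereditary algebra: for $k$ large the vector $c^{-k}\underline{\dim}\,P_i$ lies in (a neighbourhood of the ray spanned by) the positive Perron--Frobenius-type eigendirection governing growth, along which all coordinates are strictly positive and the sequence grows without bound in every coordinate; hence there is no return to a non-sincere vector. Alternatively, one can argue representation-theoretically: if $\tau^{-k}P$ is sincere and $\tau^{-(k+1)}P = \tau^{-1}(\tau^{-k}P)$, then since $H$ is hereditary and $\tau^{-k}P$ is preprojective (so $\Ext$-vanishing and the defect considerations apply), the almost split sequence $0 \to \tau^{-k}P \to E \to \tau^{-(k+1)}P \to 0$ forces $\underline{\dim}\,\tau^{-(k+1)}P \geq \underline{\dim}\,E - \underline{\dim}\,\tau^{-k}P$, and a short induction on the local structure of $\PP$ (which, away from the finitely many projectives, is of shape $\Z\Delta$ or a suitable stable translation quiver) shows the support cannot shrink. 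Either route gives the uniform $N$, completing the proof.
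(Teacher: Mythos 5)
Your route is genuinely different from the paper's, which disposes of the lemma in two lines: it cites \cite[Ch.~IX, Proposition~5.6]{ASS}, asserting that each orbit $\{\tau^{-k}P_i\}_{k\geq 0}$ and $\{\tau^{k}I_i\}_{k\geq 0}$ contains only finitely many non-sincere modules, and then takes $N=\max_i N_i$ over the finitely many indecomposable projectives and injectives --- exactly your final step. Your attempt to reprove the key fact via Coxeter asymptotics could in principle work, but as written it has a gap precisely at that key step. The deduction ``the dimension vectors $c^{-k}\underline{\dim}\,P_i$ are pairwise distinct and nonnegative, hence unbounded, hence all entries tend to $+\infty$'' is a non sequitur: distinctness only gives unboundedness of the total dimension, not divergence of every coordinate. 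Your ``monotonicity'' patch does not repair this --- the assertion that each coordinate sequence is eventually monotone increasing is exactly what needs proof, and the parenthetical justification (``arises from a linear recursion with nonnegative combinatorial meaning'') is not an argument; the alternative sketch via almost split sequences is likewise only a gesture. Note also that the monotonicity worry is beside the point: if one genuinely proves that every coordinate of $\underline{\dim}\,\tau^{-k}P_i$ tends to $+\infty$, then by definition of divergence each coordinate is eventually bounded below by $1$ forever, and no monotonicity is needed.

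To make your approach rigorous you would have to treat the two representation-infinite cases separately: in the wild case the spectral radius $\rho$ of the Coxeter transformation satisfies $\rho>1$, is a simple eigenvalue with a strictly positive eigenvector $y^{+}$, and $\rho^{-k}c^{-k}\underline{\dim}\,P_i$ converges to a positive multiple of $y^{+}$ (this is a theorem, due to Ringel and to de la Pe\~na--Takane, not merely ``a standard argument''); in the tame case $\rho=1$ and the growth of $c^{-k}\underline{\dim}\,P_i$ is linear in the direction of the sincere null root $\delta$, with rate given by the negative defect of $P_i$. Either case yields coordinatewise divergence, after which your reduction to finitely many $P_i$, the direct-sum remark, and the duality argument for $\tau^{k}I$ are fine. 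So the skeleton is sound, but you should either carry out these asymptotics carefully or simply quote the result from \cite{ASS} as the paper does.
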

		\begin{proof}
			For any $1 \leq i \leq n$, it is known that the sets $\ens{\tau^{-k}P_i}_{k \geq 0}$ and $\ens{\tau^{k}I_i}_{k \geq 0}$ contain only finitely many non-sincere modules, see \cite[Ch. IX, Proposition 5.6]{ASS}. Therefore, there exists $N_i \geq 0$ such that $\tau^{-k}P_i$ and $\tau^{k}I_i$ are sincere for any $k \geq N_i$. Then $N = \max \ens{N_i \ | \ 1 \leq i \leq n}$ is as wanted.
		\end{proof}

		\textbf{Proof of Theorem \ref{theorem:affine}:}
		Let $Q$ be an affine quiver and $H = \kQ$. Then a maximal green sequence for $Q$ is an oriented path 
		$$H = T^{(0)} \fl T^{(1)} \fl \cdots \fl T^{(p-1)} \fl T^{(p)} = H[1]$$ in $\oo\KK_\DD$ from $H$ to $H[1]$. 
		Note that $H$ has its indecomposable summands in $\PP$ and $H[1]$ has all of its indecomposable summands in $\II$.
		
		Since the quiver $Q$ is affine there are only finitely many indecomposable rigid regular $H$-modules, and so the number of isomorphism classes of regular indecomposable summands of silting object arising on an oriented path from $H$ to $H[1]$ in $\oo\KK_\DD$ is finite. We want to show that the same holds for $\PP$ and $\II$.
		
		Let $N$ be as in Lemma \ref{lem:tausincere}, and suppose that one of the silting objects $T^{(s)}$ along the oriented path contains a summand of the form $\tau^{-k} P_i $ for an indecomposable projective $H$-module $P_i$ and $k \ge 2N$. 
		Every possible summand lying in $\II$ can be written as $\tau^{l} P_j[1]$ for some $l \ge 0$ and some indecomposable projective $H$-module $P_j$, and we have
		
		\begin{align*}
			\Ext^1_\DD(\tau^{l} P_j[1], \tau^{-k}P_i)
				& \simeq D\Hom_\DD(\tau^{-k}P_i, \tau^{l+1} P_j[1]) \\
				& \simeq D\Hom_\DD(\tau^{-k-l}P_i, I_j).
		\end{align*}
		But the space $\Hom_\DD(\tau^{-k-l}P_i, I_j)$ is non-zero since $\tau^{-k-l}P_i$ is sincere by Lemma \ref{lem:tausincere} and a sincere module maps nontrivially into every injective $I_j$. 
		This shows that $T^{(s)}$ cannot contain both a summand of the form $\tau^{-k} P_i $ and a summand lying in $\II$.
		Moreover, it also follows from Lemma \ref{lem:tausincere} that $\Ext^1_\DD(\tau^{-m}P_j, \tau^{-k}P_i) \neq 0$ for any $|m-k| > N$, thus all other possible summands of $T^{(s)}$ lying in $\PP$ are of the form $ \tau^{-m}P_j$ for some $N < m $ since we have chosen $k \ge 2N$. By the same argument as above, such a summand would also extend with any object in $\II$.
		
		Since $H$ is tame, any tilting object in $\DD$ has at most $n - 2$ indecomposable regular modules as direct summands, see \cite{Ringel:1099}. The same holds for any silting object $S$ such that $H \leq S \leq H[1]$. Thus, $T^{(s)}$ has at least two direct summands in $\PP$, and therefore also $T^{(s+1)}$ cannot contain a summand in $\II$ since it still contains one of the two summands of $T^{(s)}$ lying in $\PP$. This means the path would never terminate at $H[1]$, a contradiction. 
		This proves that the number of isomorphism classes of indecomposable summands in $\PP$ of a silting object arising on an oriented path from $H$ to $H[1]$ in $\oo\KK_\DD$ is finite.
		The proof of the same statement for $\II$ is dual. \hfill \qed

		\begin{figure}[htb]
			\begin{center}
				\begin{tikzpicture}[scale = .4]
					\fill[gray!40] (-10,3) -- (-9,0) -- (-10,-3) -- (-13,-3) -- (-12,0) -- (-13,3) -- cycle;
					\fill[gray!40] (9,3) -- (10,0) -- (9,-3) -- (12,-3) -- (13,0) -- (12,3) -- cycle;

					\draw (-15,3) -- (-8,3);
					\draw[dashed] (-8,3) -- (-7,3);
					\draw (-15,-3) -- (-8,-3);
					\draw[dashed] (-8,-3) -- (-7,-3);

					\draw (15,3) -- (8,3);
					\draw[dashed] (8,3) -- (7,3);
					\draw (15,-3) -- (8,-3);
					\draw[dashed] (8,-3) -- (7,-3);

					\foreach \t in {-4,0,4,8}
					{
						\foreach \x in {-2.5,-2.3,...,-1.5}
						{
							\draw (\t+\x,-4) -- (\t+\x,4);
							\draw[dashed] (\t+\x,4) -- (\t+\x,5);
							\fill (\t+\x,-4) circle (.05);
						}
					}

					\foreach \x in {0,4,8}
					{

						\fill[gray!40] (-5+\x,-4) .. controls (-5+\x,-4.5) and (-3+\x,-4.5) .. (-3+\x,-4) -- (-3+\x,-2) .. controls (-3+\x,-2.5) and (-5+\x,-2.5) .. (-5+\x,-2) -- cycle;
						\fill[gray!20] (-3+\x,-2) .. controls (-3+\x,-2.5) and (-5+\x,-2.5) .. (-5+\x,-2) .. controls (-5+\x,-1.5) and (-3+\x,-1.5) .. (-3+\x,-2);

						\draw (-3+\x,-2) .. controls (-3+\x,-2.5) and (-5+\x,-2.5) .. (-5+\x,-2);
						\draw[dashed] (-3+\x,-2) .. controls (-3+\x,-1.5) and (-5+\x,-1.5) .. (-5+\x,-2);

						\draw (-5+\x,-4) -- (-5+\x,4);
						\draw (-3+\x,-4) -- (-3+\x,4);

						\draw (-5+\x,-4) .. controls (-5+\x,-4.5) and (-3+\x,-4.5) .. (-3+\x,-4);
						\draw[dashed] (-5+\x,-4) .. controls (-5+\x,-3.5) and (-3+\x,-3.5) .. (-3+\x,-4);
						\draw[dashed] (-5+\x,4) .. controls (-5+\x,4.5) and (-3+\x,4.5) .. (-3+\x,4);
						\draw[] (-5+\x,4) .. controls (-5+\x,3.5) and (-3+\x,3.5) .. (-3+\x,4);
						\draw[dashed] (-5+\x,4) -- (-5+\x,5);
						\draw[dashed] (-3+\x,4) -- (-3+\x,5);

						\draw[dashed] (-3+\x,4) -- (-3+\x,5);
					}

					\draw (-10,3) -- (-9,0) -- (-10,-3);
					\draw[thick] (-13,3) -- (-12,0) -- (-13,-3);
					\fill (-13,-3) node [below] {$H$};

					\draw (9,3) -- (10,0) -- (9,-3);
					\draw[thick] (12,3) -- (13,0) -- (12,-3);
					\fill (12,-3) node [below] {$H[1]$};					 
				\end{tikzpicture}
			\end{center}
			\caption{Schematic picture of the Auslander-Reiten quiver of $\DD^b(\modd H)$ for a tame hereditary algebra. Shaded areas correspond to where silting objects located on a path from $H$ to $H[1]$ can have their indecomposable summands.}\label{fig:siltingaffine}
		\end{figure}
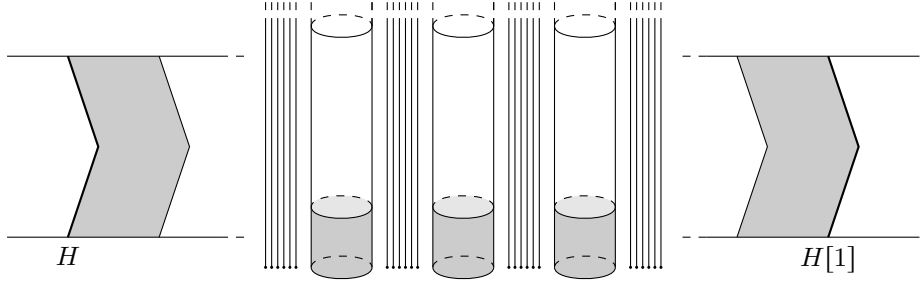		

	\subsection{Proof of Theorem \ref{theorem:3vertices}}
		In this section, we want to prove that $\green(Q)$ is a finite set for an acyclic quiver with 3 vertices.

		We first show the following proposition.
		\begin{prop}\label{prop:noregsgreen}
			Let $Q$ be a connected wild quiver with three vertices and $H=\kQ$. Assume that $T$ is a silting object arising on a path from $H$ to $H[1]$ in $\oo\E(Q)$. Then $T$ has at most one indecomposable regular summands.
		\end{prop}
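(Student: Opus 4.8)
The plan is to follow the template of the proof of Theorem~\ref{theorem:affine}, but the tame-case input — that there are only finitely many rigid indecomposable regular modules — is unavailable here and must be replaced by a structural argument special to three simples. First I would record the shape of a silting object occurring on a path from $H$ to $H[1]$ in $\oo\E(Q)$: such an object is a two-term silting complex, so it decomposes as $T = M \oplus P[1]$ where $M$ is a basic partial tilting $H$-module (in particular $\Ext^1_H(M,M) = 0$), $P$ is a basic projective $H$-module with $\Hom_H(P,M) = 0$, and the number of indecomposable summands of $M$ together with those of $P$ is $|Q_0| = 3$. Under this description the regular summands of $T$ are exactly the regular indecomposable summands of $M$, so it suffices to bound the latter by one. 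Assume, towards a contradiction, that $M$ has two non-isomorphic regular indecomposable summands $R_1, R_2$; since $n = 3$ we then have $T = R_1 \oplus R_2 \oplus T_3$ with $T_3$ a single indecomposable, which (using the directing structure $\PP \to \mathrm{reg} \to \II$ of $\Gamma(\DD)$ together with $\Hom_H(P,M) = 0$) is a preprojective module, a preinjective module, or a shifted indecomposable projective.

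The idea is then to show that no such $T$ can simultaneously be reached from $H$ and be joined to $H[1]$ by forward mutations. On the one hand, $R_1 \oplus R_2$ is a rigid regular module; reordering it into an exceptional pair and passing to the corresponding perpendicular category (Schofield, Geigle--Lenzing), which for three simples is a hereditary category on two objects — hence representation-finite, with no regular modules, or tame of type $\widetilde A_1$, whose regular indecomposables are never rigid — one tries to exclude the existence of such a pair outright, or at least to force $T_3$ and the summands of the neighbouring silting objects along the path into a rigid configuration incompatible with the path. On the other hand, near its two ends the path passes through silting objects all of whose summands lie in $\PP$, respectively are shifts $N[1]$ of preinjective modules; feeding the $\tau$-periodic sincerity of Lemma~\ref{lem:tausincere} into the $\Ext$-vanishing required between consecutive silting objects, one aims to contradict either the assumption $R_1 \not\cong R_2$ or the fact that the path starts at $H$ and ends at $H[1]$.

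The step I expect to be the real obstacle is making this second half precise: turning ``two regular summands obstruct the path'' into an honest homological contradiction. Two $\Ext$-orthogonal regular exceptional modules need not be Hom-orthogonal, so the reduction to a rank-two perpendicular category requires care about which ordering yields an exceptional sequence and about whether the resulting equivalence preserves the preprojective/regular/preinjective trichotomy; the case where $T_3$ is a shifted projective has to be treated separately; and, crucially, the argument must use the path hypothesis essentially and not merely the silting condition, since for wild quivers with at least three vertices regular tilting modules do exist as two-term silting complexes (\cite{Ringel:ARwild}) — the content of the proposition being precisely that such configurations, with more than one regular summand, never lie on a directed path from $[\h Q]$ to $[\v Q]$.
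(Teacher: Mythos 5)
Your reduction of $T$ to the form $R_1\oplus R_2\oplus T_3$ with $R_1,R_2$ regular and $T_3$ preprojective, preinjective or a shifted projective is consistent with the paper's setting, but the decisive step --- showing that such a $T$ cannot occur on a directed path from $H$ to $H[1]$ --- is exactly what you leave open, and neither of your two suggested strategies closes it. Excluding a rigid pair of regular exceptional modules outright is impossible: by \cite{Ringel:ARwild} every connected wild quiver with three vertices admits regular tilting modules, so rigid regular pairs exist in abundance; as you note yourself, the content of the proposition is the path hypothesis, not rigidity. And feeding Lemma \ref{lem:tausincere} into the $\Ext$-vanishing between consecutive silting objects only constrains the summands lying in $\PP$ and $\II$ --- that is the mechanism of the proof of Theorem \ref{theorem:affine} --- and gives no handle whatsoever on the two regular summands, so no contradiction emerges. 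As written, the proposal is a programme whose key lemma is missing.

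The paper closes this gap with a specific input you do not invoke. Writing $T=T/T_v\oplus T_v$ with $T/T_v$ regular and $T_v\simeq\tau^{-s}P_j$ preprojective (the preinjective case is dual), one mutates at $v$, forwards or backwards according to whether $v$ is green or red, obtaining $T'=T/T_v\oplus T_v^*$. Rigidity of $T$ gives $0=\Ext^1_{\DD}(T/T_v,T_v)\simeq D\Hom_H(P_j,\tau^{s+1}(T/T_v))$, so $\tau^{s+1}(T/T_v)$ is a rigid module with two indecomposable summands over the hereditary algebra obtained by deleting the vertex $j$, hence a tilting module there; consequently $\tau^{s+1}T'$ is a regular tilting $H$-module satisfying the hypotheses of Theorem 4.3 of \cite{Unger:3vertices}, which asserts that every tilting module in its connected component of $\oo\KK_{\modd H}$ has at least two $\tau$-sincere, hence regular, indecomposable summands. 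Therefore every predecessor (dually, successor) of $T'$, and hence of $T$, in $\oo\KK_{\modd H}$ has two regular summands; since $\oo\KK_{\modd H}$ is a full convex subgraph of $\oo\E(Q)$ (Theorem \ref{theorem:embedHU}), this rules out any path from $H$ (all of whose summands are projective) to $T$, respectively from $T$ to $H[1]$. Unger's three-vertex theorem combined with this convexity is the engine your sketch lacks; your perpendicular-category alternative would have to reprove a statement of comparable strength, and you have not done so.
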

		\begin{proof}
			We first prove that no regular $H$-module can appear along a maximal green sequence. Assume that $T'$ is the first regular tilting $H$-module arising on a given maximal green sequence. Therefore, this sequence contains an arrow $T \xrightarrow{\mu_v} T'$ where $T' = T/T_v \oplus T_v^*$ with $T_v$ preprojective, say $T_v \simeq \tau^{-s}P_j$, and $T_v^*$, $T/T_v$ regular. 

			Since $T$ is tilting, we get
			\begin{align*}
				0 
					& = \Ext^1_{\DD}(T/T_v,T_v) \\
					& \simeq D \Hom_\DD(T_v,\tau (T/T_v)) \\
					& \simeq D \Hom_\DD(\tau^{-s} P_j, \tau (T/T_v)) \\
					& \simeq D \Hom_\DD(P_j, \tau^{s+1} (T/T_v)) \\
					& \simeq D \Hom_H(P_j, \tau^{s+1} (T/T_v)).
			\end{align*}
			Therefore, $\tau^{s+1}(T/T_v)$ is an $A/(e_j)$-module which is rigid (since $\tau^{s+1} (T/T_v)$ is rigid as an $A$-module) and, since it has $|Q_0|-1$ indecomposable summands, it is tilting as a $A/(e_j)$-module and thus, $\tau^{s+1}T' = \tau^{s+1}(T/T_v) \oplus \tau^{s+1} T_v^*$ is a regular tilting $H$-module satisfying the hypothesis of \cite[Theorem 4.3]{Unger:3vertices}. Hence, any tilting module in the same connected component of $\oo\KK_{\modd H}$ as $\tau^{s+1} T'$ contains at least two $\tau$-sincere indecomposable summands (that is, every module in their $\tau$-orbits is sincere). These indecomposables summands are in particular regular $H$-modules. In particular, any predecessor in $\oo\KK_{\modd H}$ of $\tau^{s+1}T'$ has two regular summands and thefore, any predecessor in $\oo\KK_{\modd H}$ of $T'$ has two regular summands, a contradiction. By convexity of $\oo\KK_{\modd H}$ inside $\oo\E(Q)$ (Theorem \ref{theorem:embedHU}), there is no path from $H$ to $T'$ in $\oo\E(Q)$. 
			
			Assume now that $T$ is a silting object on a maximal green sequence of the form $T = T/T_v \oplus T_v$ where $T/T_v$ is regular. By the previous discussion, $T_v$ cannot be regular. Assume that it is preprojective and set $T_v = \tau^{-s} P_j$ for some $s \geq 0$ and $j \in Q_0$ (the proof is dual for $T_v$ preinjective). If $v$ corresponds to a green vertex in $T$, there is an arrow $T \fl T' = T/T_v \oplus T_v^*$ in $\oo\E(Q)$. Now, as in the previous discussion, any predecessor in $\oo\KK_{\modd H}$ of $\tau^{s+1}T'$ has two regular summands so that any predecessor in $\oo\KK_{\modd H}$ of $T'$, and therefore of $T$, has two regular summands, a contradiction. Dually, if $v$ corresponds to a red vertex in $T$, there is an arrow $T' \fl T$ in $\oo\E(Q)$. As in the previous discussion, any successor in $\oo\KK_{\modd H}$ of $\tau^{s+1}T'$ has two regular summands so that any successor in $\oo\KK_{\modd H}$ of $T'$, and therefore of $T$, has two regular summands, a contradiction. By convexity of $\oo\KK_{\modd H}$ inside $\oo\E(Q)$, there is no path from $H$ to $T$.
		\end{proof}

		\textbf{Proof of Theorem \ref{theorem:3vertices}:}
			Without loss of generality we can restrict to the case where $Q$ is connected. If it is Dynkin or affine, the result is known, see Theorems \ref{theorem:finitetype} and \ref{theorem:affine}. We can thus restrict to the case where $Q$ is wild. We let $H = \kQ$. The number of maximal green sequences for $Q$ equals the number of paths from $H$ to $H[1]$ in $\oo\E(Q)$. 

			Consider such a path and let $T$ be the last silting object along this path which is without summand of the form $\tau^l I_i$, with $l \geq -1$ and $i \in Q_0$. According to Proposition \ref{prop:noregsgreen}, $T$ has at most one regular direct summand. Dually, the first silting object without preprojective summand contains at most one regular direct summand. Then it follows from Lemma \ref{lem:tausincere} that the non-regular summands of silting objects between $H$ and $H[1]$ run over a finite set of isomorphism classes.
			
			Thus, if we consider a maximal green sequence, at the step before a regular summand possibly appears, the other two summands are necessarily preprojective or preinjective according to Proposition \ref{prop:noregsgreen}. In particular, it follows from Lemma \ref{lem:tausincere} that there are only finitely many possible configurations for those two summands, and therefore a finite number of possible isomorphism classes of regular summands. Therefore, only finitely many isomorphism classes of silting objects can appear along a maximal green sequence and so that the number of maximal green sequences is finite. \hfill \qed

\section*{Acknowledgements}
The authors would like to thank Bernhard Keller and Dong Yang for interesting discussions on the topic. They would also like to thank an anonymous referee for valuable comments and corrections. The first author acknowledges support from NSERC and Bishop's University. The second author acknowledges support from the ANR \emph{G\'eom\'etrie Tropicale et Alg\`ebres Amass\'ees}.
		
\appendix 

\section{Examples}\label{section:examples}
	\subsection{Rank two oriented exchange graphs}\label{ssection:rank2}
		Any connected valued quiver with two vertices is either of infinite type or of type $A_2$, $B_2$, $C_2$ or $G_2$. We list below the corresponding oriented exchange graphs. It is interesting to observe that in the non-simply-laced types, the lengths of maximal green sequences do not form an interval.

		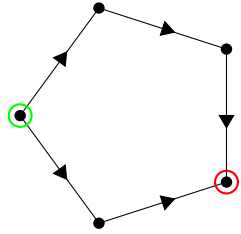
\begin{figure}[H]
			\begin{center}
				\begin{tikzpicture}[scale = .75]
					\begin{scope}[decoration={
								markings,
								mark=at position 0.6 with {\arrow{triangle 60}}}
								] 
						\coordinate (A) at (180+0:2);
						\coordinate (B) at (180+72:2);
						\coordinate (C) at (180+144:2);
						\coordinate (D) at (180+216:2);
						\coordinate (E) at (180+288:2);
						\fill (A) circle (.1);
						\fill (B) circle (.1);
						\fill (C) circle (.1);
						\fill (D) circle (.1);
						\fill (E) circle (.1);

						\draw[postaction={decorate}] (A) -- (B);
						\draw[postaction={decorate}] (B) -- (C);
						\draw[postaction={decorate}] (A) -- (E);
						\draw[postaction={decorate}] (E) -- (D);
						\draw[postaction={decorate}] (D) -- (C);

						\draw[thick,green] (A) circle (.2);
						\draw[thick,red] (C) circle (.2);

					\end{scope}
				\end{tikzpicture}
				\caption{The oriented exchange graph of type $A_2$.}\label{fig:pentagon}
			\end{center}
		\end{figure}

		\begin{figure}[H]
			\begin{center}
				\begin{tikzpicture}[scale = .75]
					\begin{scope}[decoration={
								markings,
								mark=at position 0.6 with {\arrow{triangle 60}}}
								] 
						\coordinate (A) at (180+0:2);
						\coordinate (B) at (180+60:2);
						\coordinate (C) at (180+120:2);
						\coordinate (D) at (180+180:2);
						\coordinate (E) at (180+240:2);
						\coordinate (F) at (180+300:2);
						\fill (A) circle (.1);
						\fill (B) circle (.1);
						\fill (C) circle (.1);
						\fill (D) circle (.1);
						\fill (E) circle (.1);
						\fill (F) circle (.1);

						\draw[postaction={decorate}] (A) -- (B);
						\draw[postaction={decorate}] (B) -- (C);
						\draw[postaction={decorate}] (A) -- (F);
						\draw[postaction={decorate}] (F) -- (E);
						\draw[postaction={decorate}] (E) -- (D);
						\draw[postaction={decorate}] (D) -- (C);

						\draw[thick,green] (A) circle (.2);
						\draw[thick,red] (C) circle (.2);

					\end{scope}
				\end{tikzpicture}
				\caption{The oriented exchange graph of type $B_2$ or $C_2$.}
			\end{center}
		\end{figure}
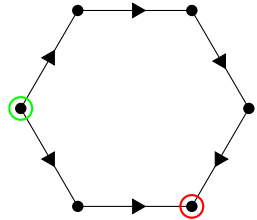

		\begin{figure}[H]
			\begin{center}
				\begin{tikzpicture}[scale = .75]
					\begin{scope}[decoration={
								markings,
								mark=at position 0.6 with {\arrow{triangle 60}}}
								] 
						\coordinate (A) at (180+0:2);
						\coordinate (B) at (180+45:2);
						\coordinate (C) at (180+90:2);
						\coordinate (D) at (180+135:2);
						\coordinate (E) at (180+180:2);
						\coordinate (F) at (180+225:2);
						\coordinate (G) at (180+270:2);
						\coordinate (H) at (180+315:2);
						\fill (A) circle (.1);
						\fill (B) circle (.1);
						\fill (C) circle (.1);
						\fill (D) circle (.1);
						\fill (E) circle (.1);
						\fill (F) circle (.1);
						\fill (G) circle (.1);
						\fill (H) circle (.1);

						\draw[postaction={decorate}] (A) -- (B);
						\draw[postaction={decorate}] (B) -- (C);
						\draw[postaction={decorate}] (A) -- (H);
						\draw[postaction={decorate}] (H) -- (G);
						\draw[postaction={decorate}] (G) -- (F);
						\draw[postaction={decorate}] (F) -- (E);
						\draw[postaction={decorate}] (E) -- (D);
						\draw[postaction={decorate}] (D) -- (C);

						\draw[thick,green] (A) circle (.2);
						\draw[thick,red] (C) circle (.2);

					\end{scope}
				\end{tikzpicture}
				\caption{The oriented exchange graph of type $G_2$.}
			\end{center}
		\end{figure}

		\begin{figure}[H]
			\begin{center}
				\begin{tikzpicture}[scale = .75]
					\begin{scope}[decoration={
								markings,
								mark=at position 0.6 with {\arrow{triangle 60}}}
								] 
						\coordinate (A) at (0,0);
						\coordinate (B) at (2,0);
						\coordinate (C) at (4,0);
						\coordinate (D) at (-2,0);
						\coordinate (E) at (-4,0);
						\coordinate (H) at (6,0);
						\coordinate (I) at (8,0);

						\fill (A) circle (.1);
						\fill (B) circle (.1);
						\fill (C) circle (.1);
						\fill (D) circle (.1);
						\fill (H) circle (.1);

						\draw[postaction={decorate}] (A) -- (B);
						\draw[postaction={decorate}] (B) -- (C);
						\draw[postaction={decorate}] (A) -- (D);
						\draw[dashed,postaction={decorate}] (D) -- (E);
						
						\draw[postaction={decorate}] (H) -- (C);
						\draw[dashed,postaction={decorate}] (I) -- (H);

						\draw[thick,green] (A) circle (.2);
						\draw[thick,red] (C) circle (.2);

					\end{scope}
				\end{tikzpicture}
				\caption{Oriented exchange graph of of rank two in infinite type.}\label{fig:oeg2infinite}
			\end{center}
		\end{figure}
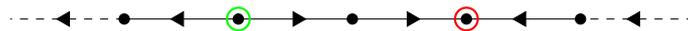

	\subsection{Examples of simply-laced affine types}\label{section:exmpaffines}
		For a quiver $Q$ of affine type, we have (non-maximal) green sequences of infinite lengths. However, Theorem \ref{theorem:affine} asserts that the number of maximal green sequences is finite. There is therefore a maximal length for the maximal green sequences. If Conjecture \ref{conj:intervals} holds, in order to list all the maximal green sequences, it is enough to find some $l \geq 1$ for which $0 < |\green_{\leq l}(Q)| = |\green_{\leq l+1}(Q)|$ and then $\green(Q) = \green_{\leq l}(Q)$ and $\ell^0_{\max}(Q) = \ell_{\max}(Q)$. Using the \verb\Quiver Mutation Explorer\, one can compute several such empirical lengths. 
		
		For $1 \leq n \leq 7$, if $Q$ is a quiver of type $\widetilde A_{n,1}$, that is a quiver of type $\widetilde A$ with $n$ arrows going clockwise and one arrow going counterclockwise, we computed that for any $n \leq 7$, we have 
		$$\ell^0_{\max}(Q) = \frac{n(n+3)}{2}.$$

		For $4 \leq n \leq 7$, if $Q_n$ denotes the following quiver of type $\widetilde D_n$
		$$\xymatrix@=1em{
				& 1 \ar[rd] &&&& n \\
			Q_n: 	& & 3 \ar[r] & \cdots \ar[r] & n-1 \ar[ru] \ar[rd] \\
				& 2 \ar[ru] &&&& n+1, \\
		}$$
		we obtained 
		$$\ell^0_{\max}(Q_{n}) = 2n^2 - 2n - 2.$$
		
		Also, it is interesting to note that for (acyclic) quivers of type $\widetilde D_4$, we could compute that the number of maximal green sequences depends on the choice of the orientation of the quiver but the minimal length and the empirical maximal length do not. We do not know if this is a general phenomenon.
%
%

	\subsection{An example from a surface without boundary}\label{ssection:sphere}
		Consider the following triangulation $T$ of the sphere with four punctures. 
		\begin{center}
			\begin{tikzpicture}[scale = .4]
				\draw[thick] circle (4);

				\coordinate (W) at (-4,0);
				\coordinate (E) at (4,0);

				\coordinate (P1) at (-2,-.6);
				\coordinate (P2) at (2,-.6);
				\coordinate (P3) at (0,1.4);
				\coordinate (P4) at (0,-2.72);

				\fill (P1) circle (.15);
				\fill (P2) circle (.15);
				\fill (P3) circle (.15);
				\fill (P4) circle (.15);

				\draw (P1) -- (P3) -- (P2) -- (P4) -- cycle;

				\draw[] (W) .. controls (-2,-1) and (2,-1) .. (E);
				\draw[gray,dashed] (W) .. controls (-2,1) and (2,1) .. (E);
			\end{tikzpicture}
		\end{center}
		As defined in \cite{FST:surfaces}, the quiver $Q_T$ corresponding to this triangulation is the following.
		\begin{center}
			\begin{tikzpicture}[scale = .75]
				\tikzstyle{every node} = [font = \small]
				\begin{scope}[decoration={
					markings,
					mark=at position 0.6 with {\arrow{triangle 45}}}
					] 

					\coordinate (1) at (-1,0);
					\coordinate (2) at (1,0);
					\coordinate (3) at (0,1);
					\coordinate (4) at (0,-1);
					\coordinate (5) at (0,2);
					\coordinate (6) at (0,-2);

					\fill (1) circle (.1);
					\fill (2) circle (.1);
					\fill (3) circle (.1);
					\fill (4) circle (.1);
					\fill (5) circle (.1);
					\fill (6) circle (.1);

					\draw[postaction={decorate}] (1) -- (3);
					\draw[postaction={decorate}] (1) -- (5);
					\draw[postaction={decorate}] (3) -- (2);
					\draw[postaction={decorate}] (5) -- (2);
					\draw[postaction={decorate}] (2) -- (4);
					\draw[postaction={decorate}] (2) -- (6);
					\draw[postaction={decorate}] (4) -- (1);
					\draw[postaction={decorate}] (6) -- (1);

					\fill (-2,0) node {$Q_T$:};
			
				\end{scope}
			\end{tikzpicture}
		\end{center}
		Then a direct computation shows that 
		$$\ell_{\min}(Q_T) = 12, \, \ell^0_{\max}(Q_T) = 46$$
		and 
		$$|\green^0(Q_T)| = 1~044~863~666~576.$$

	\subsection{An exceptional mutation-finite type}
		Consider the following quiver of type $\mathbb X_6$
		$$\xymatrix@=1em{
				&	& 2 \ar@<-2pt>[r]\ar@<+2pt>[r] & 3 \ar[ld] \\
			Q:	& 6 	& 1 \ar[l] \ar[u] \ar[d] \\
				&	& 4 \ar@<-2pt>[r]\ar@<+2pt>[r] & 5 \ar[lu]
		}
		$$
		which first appeared in \cite{DerksenOwen:finite} as an example of mutation-finite quiver which is not arising from a surface. Then we obtained
		$$\ell_{\min}(Q) = 10, \, \ell^0_{\max}(Q) =30\, \text{ and } |\green^0(Q)| = 119~819~022. $$

		We could not find a maximal green sequence for the type $\mathbb X_7$ appearing in \cite{DerksenOwen:finite}.


\end{document}